\def\var{\mathop{\rm var}\nolimits}
\def\Cref{\ref}
\def\PY{\mathop{\rm PY}\nolimits}
\mathchardef\given="626A
\def\X{{\cal X}}
\def\A{{\cal A}}
\def\E{\EE}
\def\EE{\mathbb{E}}
\def\NN{\mathbb{N}}
\def\RR{\mathbb{R}}
\def\ra{\rightarrow}
\def\s{\sigma}
\def\a{\alpha}
\def\b{\beta}
\def\d{\delta}
\def\e{\epsilon}
\def\g{\gamma}
\def\Beta{\mathop{\rm B}\nolimits}
\let\weak\rightsquigarrow
\def\prob{\,\hbox{$\stackrel{\mbox{\scriptsize P}}{\ra}$}\, }
\newcommand{\ind}{\, \raise-2pt\hbox{$\stackrel{\mbox{\scriptsize ind}}{\sim}$}\, }
\newcommand{\iid}{\, \raise-2pt\hbox{$\stackrel{\mbox{\scriptsize iid}}{\sim}$}\,}
\def\q{\theta}
\theoremstyle{plain}
\newtheorem{theorem}{Theorem}
\newtheorem{lemma}[theorem]{Lemma}
\newtheorem{example}{Example}
\begin{document}

\title{Empirical and Full Bayes estimation of the type of a Pitman-Yor process}
\author{S.E.M.P. Franssen and A.W. van der Vaart}

\maketitle

\begin{abstract}
The Pitman-Yor process is a random discrete probability distribution of which the atoms can be used
to model the relative abundance of species. The process is indexed by a type parameter $\s$, which controls the
number of different species in a finite sample from a realization of the distribution. 
A random sample of size $n$ from the Pitman-Yor process of
type $\s>0$ will contain of the order $n^\sigma$ distinct values (``species'').
In this paper we consider the estimation of the type parameter by both empirical Bayes and full Bayes methods.
We derive the asymptotic normality of the empirical Bayes estimator and 
a Bernstein-von Mises theorem for the full Bayes posterior, in the frequentist setup that the observations
are a random sample from a given true distribution. We also consider the estimation of the
second parameter of the Pitman-Yor process, the prior precision.
We apply our results to derive the limit behaviour of the likelihood ratio in a setting of
forensic statistics. 
\end{abstract}

\section{Introduction}
The Pitman-Yor process \cite{PitmanandYor(1997),PermanPitmanandYor(1992)}
is a random discrete probability distribution, which can be used as a model for the relative abundance of species.
It is characterised by a \emph{type} parameter $\sigma$. Our main aim
is statistical inference on this type parameter.

The Pitman-Yor process of type $\sigma=0$
is the Dirichlet process \cite{Ferguson(1974)}, which is well understood, while negative types correspond to finitely discrete distributions
and were considered in \cite{DeBlasiLijoiandPrunster(2013)}. In this paper we concentrate on Pitman-Yor processes of positive type.
The Pitman-Yor process is also known as the two-parameter Poisson-Dirichlet Process,
 is an example of a Poisson-Kingman process \cite{Pitman(2003)}, and a species sampling process
of Gibbs type~\cite{deBlasi2015}.

The easiest definition is through \emph{stick-breaking} (\cite{PermanPitmanandYor(1992),IshwaranJames}), as follows.
The family of nonnegative Pitman-Yor processes is given by three parameters: a number $\sigma \in [0,1)$, a number $M > -\sigma$
and an atomless probability distribution $G$ on some measurable space $(\X,\A)$.
We say that a random probability measure $P$ on $(\X,\A)$ is a Pitman-Yor process (of nonnegative type), 
denoted $P \sim \PY\left(\sigma, M, G \right)$, if $P$ can be represented as
\[
P = \sum_{i = 1}^\infty W_i \delta_{\theta_i},
\]
where $W_i = V_i \prod_{j = 1}^{i - 1} (1 - V_j)$ for $V_i $ independent variables with $V_i\sim\Beta( 1 - \sigma, M + i \sigma)$, independent of 
$\theta_i \iid G$.

It is clear from this definition that the realisations of $P$ are discrete probability measures, with countably many
atoms at random locations, with random weights. If one first draws $P\sim \PY\left(\sigma, M, G \right)$, and next given $P$ a random sample
$X_1,\ldots, X_n$ from $P$, then ties among the latter observations are likely. Each tie represents an observed species.
Each species is identified by a label $\q_i$, but this does not play an important role in the present paper, 
except that labels are unique by the assumption
that $G$ is atomless. If the weights $(W_i)$ correctly model the relative abundance of distinct species, 
then the (conditional) probability that a next observation $X_{n+1}$ will be distinct
from $X_1,\ldots, X_n$ is indicative of the likelihood of finding a new species given the past observations. 

In a forensic setup one may consider two instances not present in a ``database'' $X_1,\ldots, X_n$
and compare the hypotheses that these two values are two independent draws $X_{n+1},X_{n+2}$ from the population
or copies of a single draw $X_{n+1}$. If one of the new instances is a characteristic (e.g.\ DNA profile) found at a crime scene
and the other a characteristic of a suspect, one can so derive the ratio of the probabilities that
the characteristic at the crime scene originates from the perpetrator (two copies of a single draw $X_{n+1}$)
or that the match is by chance (two draws $X_{n+1}, X_{n+2}$ that happen to be the same). See \cite{Cereda2017,Cereda2019}
(and Section~\ref{SectionForensic}) for further discussion.

As shown in \cite{Cereda2019} such probabilities are readily calculable 
under the Pitman-Yor model, but depend strongly on the type parameter $\s$.
Thus it is desirable to estimate this parameter from the observed data. In this paper we consider both the empirical Bayes 
estimator (which is the maximum likelihood estimator in the Bayesian setup with the Pitman-Yor process viewed as a prior) 
and the full Bayes posterior of $\s$. We prove that the empirical Bayes estimator is asymptotically normal, and show that the
posterior distribution satisfies a corresponding Bernstein-von Mises theorem. We apply these results to the forensic
problem.

The Pitman-Yor process is characterised by a second parameter $M$,  called the prior precision. As its name suggests,
this is a prior modelling parameter, and perhaps is better not estimated. We show that the asymptotics of the type
parameter are almost independent of the prior precision, and that the problem of estimating the prior precision degenerates
as $n\ra\infty$.

Other applications of the Pitman-Yor process, in genetics or topic modelling,
can be found in \cite{Wood2009, Teh2006, Goldwater2005, Arbel2018,favaro2021nearoptimal}. 
The limiting case of the Pitman-Yor process with $M\downarrow0$ is a (particular) Poisson-Kingman process
(see \cite{PitmanandYor(1997)}, or Example~14.47 in \cite{FNBI}).
Estimation of the type parameter for this process was considered in \cite{favaro2021nearoptimal}, who
give conditions so that the maximum likelihood estimator converges to a limit at a certain rate.
The present paper generalizes and refines these results in the case of a general $M$ 
(by providing a sharp rate and a limit distribution), and adds a full
Bayes analysis.

The posterior distribution when the Pitman-Yor process is used as a nonparametric prior on the distribution
of the observations was studied in \cite{James2008} for known type parameter and continuous
observations, and for unknown type and general observations in \cite{Franssen2020}. In the latter
paper it is shown that the posterior distribution is not very sensitive to the type parameter,
and its asymptotics could be established knowing just consistency for this parameter,
and hence without interception of the precise results of the present paper.

\section{Main results}
We consider statistical procedures derived from the Bayesian model in which a probability distribution $P$ is 
drawn from the Pitman-Yor process, viewed as a prior over the set of probability measures, and next
given $P$ the observations $X_1,\ldots,X_n$ are an i.i.d.\ sample from $P$. To estimate the parameters
$\s$ (or $(\s,M)$) of the Pitman-Yor process, we consider the maximum likelihood estimator, based on
the marginal distribution of $(X_1,\ldots,X_n)$ in this setup, and a full Bayes approach. Because the maximum likelihood
estimator estimates a parameter of the prior, in this setup this estimator is also called an empirical Bayes 
estimator. The full Bayes approach adds a prior distribution over $\s$ (or $(\s, M)$) to the Bayesian hierarchy 
and then uses the conditional  distribution of $\s$ given $X_1,\ldots, X_n$, the posterior distribution,
for further inference. 

While these two procedures are defined by the Bayesian setup, our theoretical results obtained below are frequentist Bayes:
we study these two procedures, both functions of the observations $X_1,\ldots,X_n$, under the
assumption that these observations are a random sample from a given distribution $P_0$.

Both statistical procedures are based on the Bayesian likelihood for $X_1,\ldots,X_n$. This can be conveniently
obtained by considering the exchangeable random partition of the set $\{1, 2,\ldots, n\}$ generated by the sample
through the equivalence relation
$$i\equiv j\qquad\qquad \text{ if and only if }\qquad\qquad X_i=X_j.$$
An alternative way to generate $X_1,\ldots,X_n$ is to generate first the partition and next attach
to each set in the partition a value generated independently from the center measure $G$
(see e.g.\ \cite{FNBI}, Lemma~14.11 for a precise statement), duplicating this as many times as there are indices in the set, in order
to form the observations $X_1,\ldots, X_n$.
Because the parameter $\s$ enters only in creating the
partition, the partition is a sufficient statistic for $\s$. Because of exchangeability, the vector
$(K_n,N_{n,1},\ldots, N_{n,K_n})$ of the number $K_n$ of sets in the partition and
the cardinalities $N_{n,i}$ of the partitioning sets (i.e.\ the multiplicity of $X_i$ in $X_1,\ldots, X_n$)
 is already sufficient for $(M,\s)$ and hence
the empirical Bayes estimator and posterior distribution of $(M,\s)$ are the same, whether based on observations $(X_1,\ldots,X_n)$
or on  observations $(K_n,N_{n,1},\ldots, N_{n,K_n})$.

The likelihood function for $(M,\s)$ is therefore equal to the probability of a particular partition,
called the \emph{exchangeable partition probability functio}n (EPPF). For the Pitman-Yor process this is
given by (see \cite{Pitman1996b}, or \cite[page 465]{FNBI})
\begin{equation}
\label{EqLikelihoodSigma}
p_\s(N_{n,1},\ldots, N_{n,K_n})=\frac{\prod_{i=1}^{K_n-1}(M+i\s)}{(M+1)^{[n-1]}}\prod_{j=1}^{K_n}(1-\s)^{[N_{n,j}-1]}.
\end{equation}
Here $a^{[n]}=a(a+1)\cdots(a+n-1)$ is the ascending factorial, with $a^{[0]}=1$ by convention. 

Although we adopt the Pitman-Yor process as a prior for the distribution $P$ of the observations, and then arrive
at the likelihood \eqref{EqLikelihoodSigma}, we shall investigate the maximum likelihood estimator and
posterior distribution under the frequentist-Bayes assumption that in reality the observations $X_1,\ldots, X_n$ are an i.i.d.\ sample 
from a given distribution $P_0$. It turns out that the asymptotic properties of the maximizer of \eqref{EqLikelihoodSigma} are then determined by
the function $\a_0: (1,\infty)\to \NN$ given by 
\begin{equation}
\label{EqFirstOrderRegular}
\a_0(u):=\#\Bigl\{ x: \frac1{P_0\{x\}}\le u\Bigr\}.
\end{equation}
The function $\a_0$ is nondecreasing and increases by jumps of size 1.
Following \cite{Karlin1967}, we shall assume that this function is regularly varying. The paper
\cite{Karlin1967} derived (distributional) limits of characteristics such as $K_n$. In the present paper
we use similar methods to analyse the likelihood function \eqref{EqLikelihoodSigma}.

Recall that a measurable function $\a: (0,\infty)\to \RR_+$ is said to be \emph{regularly varying} (at $\infty$) 
of order $\g$ if, for all $u>0$, as $n\ra\infty$,
$$\frac{\a(nu)}{\a(n)}\ra u^\g.$$
It is known (see e.g.\ \cite{Binghametal} or the appendix to \cite{deHaan})
that if the limit of the sequence of quotients on the left exists for every $u$, then it necessarily has the form $u^\g$, for some $\g$.
If we write $\a(u)=u^\g L(u)$, then $L$ will be \emph{slowly varying}: a function that is regularly varying of order 0.
Then $\a(n)=n^\g L(n)$, and it can be shown that $n^{\g-\d}\ll \a(n)\ll n^{\g+\d}$, for every $\d>0$, so that the
rate of growth of $\a$ is $n^\g$ to ``first order''. 

Since the function $\a_0$ in \eqref{EqFirstOrderRegular} increases 
by steps, it is discontinuous and so is the associated slowly varying function.
We assume that there exists $\s_0\in (0,1)$ and a continuously differentiable slowly varying function $L_0$ such that,
for every $u>1$,
\begin{alignat}{4}
\bigl|\a_0(u)-u^{\s_0} L_0(u)\bigr|&\le C u^{\b_0},&&\qquad  &&\text{for some } \b_0<\s_0 \text{ and }  C>0,\label{EqApproxAlpha}\\
\bigl|L_0'(u)\bigr|&\le C_\d \,u^{-1+\d},&&\qquad &&\text{for any }\d>0, \text{ for some }C_\d.\label{EqBoundL}
\end{alignat}
If \eqref{EqApproxAlpha} holds for a slowly varying function $L_0$, then $\a_0$ is regularly varying of
order $\s_0$ (see Lemma~\ref{LemmaRV}). The rationale for \eqref{EqBoundL} is that a slowly varying function
$u\mapsto L(u)$ will always grow slower than any power $u^\d$. It may not be differentiable, but if it is,
then it is reasonable that the derivative grows slower than any power of $u^{-1+\d}$. Condition \eqref{EqBoundL} 
is satisfied, for instance, by powers  $L_0(u)=(\log u)^r$ of the logarithmic function, for $r\in\RR$, and the functions
$L_0(u)=e^{(\log u)^r}$, for $r<1$. Although we assume \eqref{EqApproxAlpha}-\eqref{EqBoundL} for $u>1$, the bounds
are asymptotic in the sense that if they hold for $u>U$ and some $U$, then they are valid for $u>1$ and possibly
larger constants $C$ and $C_\d$ (and some extension of $L_0$), since the right sides are bounded away from zero on intervals $(1,U]$.

\begin{example}
\label{ExamplePolynomialpj}
For the probability distribution $(p_j)_{j\in\NN}$ with $p_j=c/j^\a$, for some $\a>1$, we have
$\a_0(u):=\# (j: 1/p_j\le u)= \lfloor (cu)^{1/\a}\rfloor$. Then $|\a_0(u)-(c u^{1/\a})|\le 1$ and hence
 \eqref{EqApproxAlpha}-\eqref{EqBoundL} are satisfied with $\s_0=1/\a$,  $L_0(u)= c^{1/\a}$, $C=1$, $\b_0=0$, and $C_\d=0$.
\end{example}

\begin{example}
\label{ExampleFavaro}
In  \cite{favaro2021nearoptimal} it is assumed that 
$|\alpha_0(u)-Lu^{\s_0}|\le C u^{\s_0/2}\sqrt{\log(eu)}$, for every $u>1$ and some constant $C$.
This  implies \eqref{EqApproxAlpha}-\eqref{EqBoundL} with $L_0$ constant and any  $\b_0$ slightly bigger
than $\s_0/2$ (and hence easily satisfying the restriction in \eqref{EqApproxAlpha}). 
\end{example}

\subsection{Estimating the type parameter}
The following theorem shows that the empirical likelihood estimator  is asymptotically normal,
after scaling by the rate $\sqrt{\a_0(n)}$ and centering at the zero $\s_{0,n}$ of the function
\begin{equation}
E_{0,n}(\s)= \int_0^n\a_0\Bigl(\frac ns\Bigr)e^{-s}\Bigl(\frac 1\s-\sum_{m=1}^\infty \frac{s^m}{m!(m-\s)}\Bigr)\,ds.
\label{EqDefEn}
\end{equation}
In Lemma~\ref{LemmaZero} these zeros are shown to converge to the exponent $\s_0$ of regular variation of $\a_0$,
at a rate depending on the function $\a_0$.

For the moment we keep $M$ fixed and let $\hat\s_n$ be the maximizer of the likelihood \eqref{EqLikelihoodSigma} with respect to 
$\s$, for given $M$. The following theorem shows that the asymptotic behaviour of $\hat\s_n$ is the same for every $M$.

\begin{theorem}
\label{ThmSigma}
Assume that $P_0$ is discrete with atoms such that $\a_0(u):=\#\{ x: 1/P_0\{x\}\le u\}$
satisfies \eqref{EqApproxAlpha}-\eqref{EqBoundL} with exponent $\s_0\in(0,1)$. Then  the empirical Bayes estimator
$\hat\s_n$, the point of maximum of \eqref{EqLikelihoodSigma}, satisfies 
$\sqrt{\a_0(n)}(\hat\s_n-\s_{0,n})\weak N(0,\tau_1^2/\tau_2^4)$, 
where $\s_{0,n}$ are the roots of the functions $E_{0,n}$ in \eqref{EqDefEn} 
and $\tau_1$ is given in \eqref{EqDeftau1Echt} and $\tau_2^2=-E_0'(\s_0)$, for
$E_0$ given in \eqref{EqDefE}.
\end{theorem}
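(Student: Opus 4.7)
The plan is to follow the classical MLE asymptotic-normality argument via Taylor expansion of the score equation $\dot\ell_n(\hat\s_n)=0$ about the deterministic centre $\s_{0,n}$. Differentiating the log of~\eqref{EqLikelihoodSigma} gives the score
\[
\dot\ell_n(\s)=\sum_{i=1}^{K_n-1}\frac{i}{M+i\s}-\sum_{j=1}^{K_n}\sum_{k=1}^{N_{n,j}-1}\frac{1}{k-\s},
\]
and using $i/(M+i\s)=1/\s-M/[\s(M+i\s)]$ recasts it, up to an $O(\log K_n)$ remainder, as $\sum_{j=1}^{K_n}\phi(N_{n,j};\s)$ with $\phi(m;\s):=1/\s-\sum_{k=1}^{m-1}1/(k-\s)$. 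The Hessian $\ddot\ell_n$ admits the analogous representation with $\phi$ replaced by $\partial_\s\phi$. In particular $\ell_n$ is concave, so $\hat\s_n$ is unique.

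The next step is Poissonization: replace $n$ by an independent $\tau\sim\mathrm{Po}(n)$, under which the multiplicities $\tilde N(x)$ become independent $\mathrm{Po}(np_x)$ variables. For $\tilde N\sim\mathrm{Po}(s)$ a short telescoping computation yields
\[
\frac{d}{ds}\EE\bigl[\phi(\tilde N;\s)\II\{\tilde N\ge1\}\bigr]=e^{-s}\Bigl(\frac{1}{\s}-\sum_{m=1}^\infty\frac{s^m}{m!(m-\s)}\Bigr).
\]
Combined with the identity $\sum_x f(np_x)=\int_0^n f'(s)\a_0(n/s)\dd s$ (obtained by writing $f(np_x)=\int_0^{np_x}f'$ and swapping sum and integral, using $f(0)=0$), this exhibits the expected Poissonized score as \emph{exactly} $E_{0,n}(\s)$. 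An analogous computation for the variance, with the same integration-by-parts against $\a_0(n/\cdot)$, produces $\var(\dot\ell_n(\s))=\tau_1^2\,\a_0(n)(1+o(1))$ at $\s=\s_{0,n}$ under \eqref{EqApproxAlpha}--\eqref{EqBoundL}.

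Because the Poissonized score is a sum of independent contributions of individual size $O(\log n)=o(\sqrt{\a_0(n)})$, the Lindeberg CLT delivers $\dot\ell_n(\s_{0,n})/\sqrt{\a_0(n)}\weak N(0,\tau_1^2)$; de-Poissonization is handled by viewing the fixed-$n$ sample as the Poissonized one conditioned on $\tau=n$, using asymptotic (conditional) independence of the score and $\tau$. The same Poissonization/integration-by-parts machinery applied to $\partial_\s\phi$ gives $\ddot\ell_n(\s)/\a_0(n)\prob E_0'(\s)$ uniformly on compact neighbourhoods of $\s_0$, hence $-\ddot\ell_n(\tilde\s_n)/\a_0(n)\prob\tau_2^2$ for any $\tilde\s_n\prob\s_0$. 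Consistency $\hat\s_n\prob\s_0$ follows from concavity of $\ell_n$ and uniform convergence of $\ell_n/\a_0(n)$ on compacta of $(0,1)$ to a concave limit with unique maximum at $\s_0$, together with Lemma~\ref{LemmaZero} ($\s_{0,n}\to\s_0$). Substituting into the Taylor expansion
\[
\sqrt{\a_0(n)}\,(\hat\s_n-\s_{0,n})=-\frac{\a_0(n)^{-1/2}\dot\ell_n(\s_{0,n})}{\a_0(n)^{-1}\ddot\ell_n(\tilde\s_n)}
\]
and applying Slutsky produces the stated $N(0,\tau_1^2/\tau_2^4)$ limit.

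The principal obstacle is the de-Poissonization combined with the Lindeberg check: because $\sqrt{\a_0(n)}\ll\sqrt n$ when $\s_0<1$, a naive pointwise comparison between the Poissonized and fixed-$n$ scores gives an error exceeding the target scale, so one must exploit the representation of the fixed-$n$ law as the Poissonized one conditioned on $\tau=n$ together with careful control of the covariance of the score and $\tau$. This step, and the uniformity of the Hessian convergence on the shrinking window about $\s_{0,n}$, are where the refined hypotheses \eqref{EqApproxAlpha}--\eqref{EqBoundL} (rather than plain regular variation of $\a_0$) are essential, since they control both the boundary terms discarded in the first reduction and the approximation $E_{0,n}\approx \a_0(n)E_0$ at the required $\sqrt{\a_0(n)}$ precision.
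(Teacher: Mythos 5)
Your overall architecture — rewrite the score as a sum over species, Poissonize so the multiplicities become independent Poissons, identify $E_{0,n}$ as the expected Poissonized score, apply Lindeberg, then de-Poissonize, and finish with a Taylor/Slutsky argument — coincides with the paper's strategy up to two points, one cosmetic and one substantive.

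The cosmetic difference is the final assembly. You invert the score equation via a second-order Taylor expansion and therefore need consistency of $\hat\s_n$ as a separate input. The paper instead exploits monotonicity of $\Lambda_n'$ to write $\Pr\bigl(\sqrt{\a_n}(\hat\s_n-\s_{0,n})\le x\bigr)=\Pr\bigl(\Lambda_n'(\s_{0,n}+x/\sqrt{\a_n})\le 0\bigr)$, which handles consistency and asymptotic normality in one stroke and only needs $\Lambda_n'$ at a single drifting point plus the mean-value theorem applied to $E_{0,n}$. Both routes deliver the result from the same ingredients (Lemma~\ref{LemAN} and Lemma~\ref{LemmaZero}/\ref{LemGCLambdpp}); the paper's is marginally cleaner.

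The substantive issue is the de-Poissonization, which you flag as the ``principal obstacle'' and propose to resolve via a conditional local-limit-theorem argument: view the fixed-$n$ sample as the Poissonized one conditioned on $N_n=n$ and control the covariance of the score with $N_n$. This route is left entirely unfilled, and more importantly it was motivated by an incorrect estimate. You assert that ``a naive pointwise comparison between the Poissonized and fixed-$n$ scores gives an error exceeding the target scale'' because $\sqrt{\a_0(n)}\ll\sqrt n$. That is not what happens: adding or removing $O(\sqrt n)$ observations changes the score by an amount governed not by the number of observations added but by the number of \emph{new distinct species} (and the increments of the $g_\s$-sums), which is of order $\a_0(k_n)-\a_0(n)\asymp n^{\s_0-1/2}\ll n^{\s_0/2}\asymp\sqrt{\a_0(n)}$ for $\s_0<1$. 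The paper exploits exactly this: Lemma~\ref{LemmaPoissonization} reduces de-Poissonization to showing that $a_n(V_{k_n,n}-V_{n,n})\prob 0$ for all $|k_n-n|=O(\sqrt n)$, and because the maps $m\mapsto 1_{m\ge1}$ and $m\mapsto g_\s(m)$ are nondecreasing, for $k_n\ge n$ the difference $V_{k_n,n}-V_{n,n}$ is nonnegative (stochastic ordering of binomials), so it suffices to bound its expectation — which is done in Lemma~\ref{LemmaInvariancenkn}, where the conditions \eqref{EqApproxAlpha}--\eqref{EqBoundL} enter. No local limit theorem, conditional CLT, or covariance control between the score and $N_n$ is needed. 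As written, your proposal has a genuine gap at this step: the route you identify would require substantially more machinery than you supply, and the shortcut you dismiss is in fact the one that works.
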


The proof of the theorem is deferred to Section~\ref{SectionProofThmSigma}.

The theorem centers the estimators at the zeros $\s_{0,n}$ of the functions $E_{0,n}$ in \eqref{EqDefEn}. It is shown
in Lemma~\ref{LemmaZero} that these zeros tend to the coefficient of regular variation $\s_0$ of $\a_0$, and 
hence $\hat\s_n\ra \s_0$, in probability.
However, the rate of this convergence may be too slow to replace $\s_{0,n}$ by $\s_0$ in the centering of 
$\hat\s_n$. For the case that the function $L_0$ in \eqref{EqApproxAlpha} can be taken constant, 
Lemma~\ref{LemmaZero} shows that $\s_{0,n}-\s_0=O(n^{-(\s_0-\b)})$, for $\b_0$ as in \eqref{EqApproxAlpha},
and hence if $\b_0<\s_0/2$, then $\sqrt {\a_0(n)}(\s_{0,n}-\s_0)\ra 0$,
and hence also $\sqrt{\a_0(n)}(\hat\s_n-\s_0)\weak N(0,\tau_1^2/\tau_2^4)$. 
If $\a_0$ contains nontrivial slowly varying terms, then the rate of convergence
$\s_{0,n}\ra\s_0$ will typically be much slower than $\a_0(n)^{-1/2}$ and the latter result will fail.
(Lemma~\ref{LemmaZero} gives the rate $L_0'(n)n/L_0(n)$, and its proof shows that this is
sharp, for instance: $1/\log n$ if $L_0(s)=\log s$.) 

In general, we could say that
the estimators $\hat\s_n$ roughly, but possibly not quite, estimate the degree of regular variation of $\a_0$.
If one believes in the likelihood, then this is an indication that the type parameter has a more subtle
interpretation than the degree of regular variation, rendering it extra worth while to use principled
methods for its estimation. (If interest were in the coefficient of regular variation $\s_0$, 
then direct approaches may be preferable.)
 
In the special case considered in Example~\ref{ExampleFavaro}, the rate of the estimator
is $\sqrt{\a_n(n)}=n^{\s_0/2}$. This is a faster rate than obtained (in the case that $M=0$) 
in  \cite{favaro2021nearoptimal}, who showed 
that $\hat\s_n=\s_0+O_P(n^{-{\s_0}/2}\sqrt{\log n})$ under the condition in Example~\ref{ExampleFavaro}. 
(Our improved rate centers at $\s_{0,n}$; for centering at $\s_0$, the condition 
of Example~\ref{ExampleFavaro} must be slightly strengthened
to have an exponent strictly smaller than $\s_0/2$ rather than $\s_0/2$.)

The asymptotic variance of the sequence $\sqrt{\a_0(n)}(\hat\s_n-\s_{0,n})$ 
is a one-dimen\-sional form of the sandwich formula, which is clear if it is
written as $\tau_2^{-2}\tau_1^2\tau_2^{-2}$. It appears that in general $\tau_1\not =\tau_2$, which is
explainable by the fact that the likelihood used to define $\hat\s_n$ is the Bayesian marginal likelihood,
which is misspecified relative to the frequentist distribution of $X_1,\ldots, X_n$: the likelihood behaves
like a general contrast function rather than a likelihood.

Next consider the posterior distribution of $\s$ given $X_1,\ldots, X_n$ in the model 
$\s\sim \Pi_\s$, $P\given \s\sim\PY \left( \sigma, M, G \right)$ and $X_1, \ldots, X_n \given P,\s \sim P$, for
a given prior distribution $\Pi_\s$ on $(0,1)$. Since the likelihood for observing $X_1,\ldots, X_n$
is proportional  to \eqref{EqLikelihoodSigma},  by Bayes rule the posterior distribution has density relative to $\Pi_\s$ proportional to 
\eqref{EqLikelihoodSigma}. We study the posterior distribution under the assumption that  $X_1,\ldots,X_n$ are an i.i.d.\ sample
from a distribution $P_0$. 

\begin{theorem}
\label{ThmSigmaPost}
Assume that $P_0$ is discrete with atoms such that $\a_0(u):=\#\{ x: 1/P_0\{x\}\le u\}$
satisfies \eqref{EqApproxAlpha}-\eqref{EqBoundL} with exponent $\s_0\in(0,1)$.
For a prior distribution $\Pi_\s$  on $\s\in (0,1)$ with a bounded density that is positive and continuous at $\s_0$, 
the posterior distribution of $\s$ 
satisfies
$$\sup_B\Bigl|\Pi_n(\s\in B\given X_1,\ldots, X_n)- N\Bigl(\hat\s_n,\frac1{\a_0(n)\tau_2^2}\Bigr)(B)\Bigr|\ra 0,$$
where the supremum is taken over all Borel sets $B$ in $(0,1)$, and 
$\tau_2^2=-E_0'(\s_0)$, for $E_0$ given in \eqref{EqDefE}. In particular, 
the posterior distribution for $\s$ contracts to $\s_0$. 
Furthermore, the posterior mean $\tilde\s_n=\E(\s\given X_1,\ldots, X_n)$
satisfies $\sqrt{\a_0(n)}(\tilde\s_n-\hat\s_n)\ra 0$, in probability.
\end{theorem}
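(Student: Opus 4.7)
The plan is a standard Laplace-approximation / Bernstein--von Mises argument that builds on the asymptotic analysis of the log-likelihood $\ell_n(\s):=\log p_\s(N_{n,1},\ldots,N_{n,K_n})$ already needed for Theorem~\ref{ThmSigma}. Because $\hat\s_n\ra\s_0\in(0,1)$ in probability, with high probability $\hat\s_n$ is interior and $\ell_n'(\hat\s_n)=0$, so a second-order Taylor expansion yields
$$\ell_n(\s)-\ell_n(\hat\s_n)=\tfrac12\ell_n''(\bar\s)(\s-\hat\s_n)^2$$
for some $\bar\s$ between $\s$ and $\hat\s_n$. The analysis behind Theorem~\ref{ThmSigma} should give that $\ell_n''(\s)/\a_0(n)$ converges in probability, uniformly on a fixed neighbourhood of $\s_0$, to $E_0'(\s_0)=-\tau_2^2$. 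Combined with $\hat\s_n\ra\s_0$, this furnishes the uniformly quadratic expansion
$$\ell_n(\s)-\ell_n(\hat\s_n)=-\tfrac12\a_0(n)\tau_2^2\,(\s-\hat\s_n)^2\bigl(1+o_P(1)\bigr)$$
valid on any window $\{|\s-\hat\s_n|\le M_n/\sqrt{\a_0(n)}\}$ with $M_n\ra\infty$ slowly.

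I then rescale by $h:=\sqrt{\a_0(n)}(\s-\hat\s_n)$, so that the posterior density of $h$ with respect to Lebesgue is proportional to
$$\pi\bigl(\hat\s_n+h/\sqrt{\a_0(n)}\bigr)\exp\bigl\{\ell_n(\s)-\ell_n(\hat\s_n)\bigr\}.$$
On $|h|\le M_n$ the exponent equals $-\tfrac12\tau_2^2 h^2+o(1)$ and the prior factor tends to $\pi(\s_0)>0$ by continuity of $\pi$ at $\s_0$ together with $\hat\s_n\ra\s_0$; a Laplace / dominated-convergence argument identifies the integral of the unnormalised density over $|h|\le M_n$ with $\pi(\s_0)\sqrt{2\pi/\tau_2^2}\bigl(1+o_P(1)\bigr)$. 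Combined with the tail bound of the next paragraph, this pins down the normalising constant and, via Scheff\'e's lemma applied to the rescaled densities, yields both the total-variation BvM statement and the posterior contraction at rate $1/\sqrt{\a_0(n)}$ around $\hat\s_n\ra\s_0$.

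The main obstacle is the global tail control. I split the complement $\{|\s-\hat\s_n|>M_n/\sqrt{\a_0(n)}\}$ at a fixed small $\d>0$. On the inner annulus $M_n/\sqrt{\a_0(n)}\le|\s-\hat\s_n|\le\d$ I extend the Taylor bound using the uniform curvature estimate $\ell_n''(\s)/\a_0(n)\le -c$ on a fixed neighbourhood of $\s_0$, giving $\ell_n(\s)-\ell_n(\hat\s_n)\le -c\a_0(n)(\s-\hat\s_n)^2$ and a contribution of order $e^{-cM_n^2}$ to the unnormalised tail integral. On $|\s-\hat\s_n|\ge\d$ I invoke a uniform law $\ell_n(\s)/\a_0(n)\ra E_0(\s)$ in probability, together with strict unimodality of $E_0$ at $\s_0$, to conclude $\ell_n(\s)-\ell_n(\hat\s_n)\le -c\a_0(n)$. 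The delicate endpoints $\s=0,1$ must be treated using the explicit form of \eqref{EqLikelihoodSigma}: the product $\prod_j(1-\s)^{[N_{n,j}-1]}$ vanishes as $\s\ra 1$ whenever some $N_{n,j}=1$ (which happens with high probability under the regular-variation hypothesis), while $\prod_{i=1}^{K_n-1}(M+i\s)$ decays, relative to its value at $\s_0$, as $\s\ra 0$ once $K_n$ is large. Integrating against the bounded prior and dividing by the (probabilistically bounded below) normalising constant gives the required posterior tail estimate.

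For the posterior mean, rewrite $\sqrt{\a_0(n)}(\tilde\s_n-\hat\s_n)=\E(H\given X_1,\ldots,X_n)$ for $H:=\sqrt{\a_0(n)}(\s-\hat\s_n)$. The tail estimate of the preceding paragraph translates into a sub-Gaussian envelope $\Pi_n(|H|>t\given X_1,\ldots,X_n)\le Ce^{-ct^2}$ holding uniformly in $n$ on an event of probability tending to one, which is more than enough to ensure uniform integrability of $H$ under the random posterior. Since the total-variation BvM limit $N(0,1/\tau_2^2)$ has mean zero, uniform integrability upgrades TV convergence to convergence of the first moment, giving $\E(H\given X_1,\ldots,X_n)\ra 0$ in probability and completing the proof.
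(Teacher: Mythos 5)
Your overall Laplace/BvM template is sound, but you miss the structural fact the paper leans on to make the tail control elementary: $\Lambda_n(\cdot,M)$ is \emph{strictly concave} in $\s$ (each summand in \eqref{EqLogLikelihood} is a concave function of $\s$). The paper exploits this by noting that for $\s_n>\hat\s_n$, concavity gives $\Lambda_n(\s)\ge\Lambda_n(\s_n)$ on $(\hat\s_n,\s_n)$ and $\Lambda_n(\s)\le\Lambda_n(\s_n)+\Lambda_n'(\s_n)(\s-\s_n)$ on $(\s_n,1)$, so that $\Pi_n(\s>\s_n\given\cdot)\lesssim 1/\bigl(-\Lambda_n'(\s_n)(\s_n-\hat\s_n)\bigr)$; choosing $\s_n=\s_{0,n}+m_n/\sqrt{\a_n}$ and applying Lemmas~\ref{LemAN} and~\ref{LemConvergenceExpectations} (which only require $\s_n\to\s_0$, not uniformity) then yields the $\sqrt{\a_n}$-consistency, after which one needs the curvature estimate (Lemma~\ref{LemGCLambdpp}) \emph{only} on the contracting sets $C_n=\{\sqrt{\a_n}|\s-\hat\s_n|\le m_n\}$. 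No uniform law of large numbers on a fixed compact, no separate endpoint analysis, and no uniform second-derivative estimate over a fixed neighbourhood are needed.

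By contrast, your plan asks for (i) a uniform-in-$\s$ curvature bound $\ell_n''(\s)/\a_0(n)\le -c$ on a \emph{fixed} $\d$-neighbourhood of $\s_0$, and (ii) a uniform LLN $\ell_n(\s)/\a_0(n)\to E_0(\s)$ on compacta bounded away from $\s_0$, plus (iii) explicit endpoint control near $0$ and $1$. None of (i)--(ii) is proved in the paper (Lemma~\ref{LemGCLambdpp} only treats $\tilde\s_n\prob\s_0$, not a uniform statement, and the stated limit $E_0'(\s_0)$ is wrong for a fixed neighbourhood — it would have to be $E_0'(\s)$), and you do not supply proofs; they are believable but would require genuinely new arguments, e.g.\ a Pólya-type argument from concavity and pointwise convergence, which is itself additional work. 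The same gap propagates to your posterior-mean argument: you derive the sub-Gaussian envelope, hence uniform integrability of $H$, from the unproven (i)--(ii), whereas the paper bounds $\int_{\sqrt{\a_n}(\s-\hat\s_n)>m_n}\sqrt{\a_n}|\s-\hat\s_n|\,d\Pi_n$ directly from the same concavity/linear-majorant bound. Concretely: if you insert the observation that $\Lambda_n$ is concave in $\s$ with $\Lambda_n'(\hat\s_n)=0$, steps (i)--(iii) collapse and your proposal essentially reduces to the paper's proof; as written, the proposal has a real gap in the global tail control.
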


The proof Theorem~\ref{ThmSigmaPost} is deferred to Section~\ref{SectionProofThmSigmaPost}. 

Apart from the unusual scaling rate $\a_0(n)$, Theorem~\ref{ThmSigmaPost} is of the Bernstein-von Mises type,
for a misspecified model (see \cite{KleijnvanderVaart}). Misspecification arises, because the likelihood corresponds
to the Bayesian model, but the observations are sampled from $P_0$.

\subsection{Estimating the precision}
The parameter $M$ is commonly referred to as the \emph{prior precision}. This name suggests 
that this is truly a prior modelling parameter and estimating it from the data may be illogical. 
Theorem~\ref{ThmSigma} shows that the maximum likelihood estimator  $\hat\s_{n,M}$ of $\s$, for a given $M$,
satisfies  that the sequence $\sqrt{\a_0(n)}(\hat\s_{n,M}-\s_{n,0})$ tends to a centered normal distribution, for every $M$,
where the limit is independent of $M$.
Inspection of the proof shows that  $\hat\s_{n,M_n}$, for a sequence $M_n$, has
the same behaviour, as long as $M_n\ll \sqrt{\a_0(n)}/\log n$. Furthermore, if $M$ is equipped with a prior
over a compact (or slowly increasing) interval, then the posterior distribution of $\s$ still satisfies the
assertion of Theorem~\ref{ThmSigmaPost}, where the limit does not involve $M$. Thus for a very wide range
of prior precisions, the estimators for the type parameter are asymptotically equivalent. This may
again suggest that the parameter $M$ plays a different role than the type parameter. 

Nevertheless, we might use the likelihood function
to obtain also a maximum likelihood or Bayes estimator for $M$. 
In the following theorem we consider the maximum likelihood estimator, where the parameter $M$ 
is restricted to a compact set $[0, \bar M]$. (The proof extends to $\bar M=\bar M_n$ that increase not too fast to infinity.)

The  limiting value $M_0$ of the maximum likelihood estimator
$\hat M_n$ depends on the fine details of the regular variation
of the function $\a_0$ in \eqref{EqApproxAlpha}-\eqref{EqBoundL}, through the  limit 
(assumed to exist)
$$K_0:=\lim_{n\ra\infty} \Bigl[\frac{c_0L_0'(n)n\log n}{L_0(n)}+\log L_0(n)\Bigr],$$
where $c_0=\Gamma(1-\s_0)(1+\s_0)/(\s_0\tau_2^2)$.
Define $M_0=0$ or $M_0=\bar M$ if the limit is $K_0=\infty$ or $K_0=-\infty$, respectively,
and otherwise, set it equal to the maximizer in $[0,\bar M]$ of the function
$$M\mapsto \frac M{\s_0}\bigl(K_0+\log \Gamma(1-\s_0)\bigr)+\log \Gamma(1+M)-\log \Gamma\Bigl(1+\frac{M}{\s_0}\Bigr).$$

\begin{theorem}
\label{ThmM}
Assume that $P_0$ is discrete with atoms such that $\a_0(u):=\#\{ x: 1/P_0\{x\}\le u\}$
satisfies \eqref{EqApproxAlpha}-\eqref{EqBoundL} with exponent $\s_0\in(0,1)$ and a function
$L_0$ such that $u\mapsto L_0'(u)u$ is slowly varying.
Then the joint maximum likelihood estimator $(\hat M_n,\hat\s_n)$ satisfies $\hat M_n\ra M_0$ in probability
and $\hat\s_n$ has identical behaviour as in Theorem~\ref{ThmSigma}.
\end{theorem}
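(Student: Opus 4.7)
The plan is to profile $\sigma$ out of the log-likelihood, analyze the resulting profile in $M$ on $[0,\bar M]$, and invoke a standard argmax argument to identify its limit; the claim on $\hat\sigma_n$ will then follow from a uniform-in-$M$ version of Theorem~\ref{ThmSigma}.

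First I would promote Theorem~\ref{ThmSigma} to a uniform-in-$M$ statement, showing that for each $M$ the conditional maximizer $\hat\sigma_n(M):=\arg\max_\sigma \ell_n(M,\sigma)$ satisfies $\hat\sigma_n(M)-\sigma_{0,n}=O_P(1/\sqrt{\a_0(n)})$ uniformly over $M\in[0,\bar M]$. Since $M$ enters \eqref{EqLikelihoodSigma} only through the smooth terms $\log(M+i\s)$ and $\log\Gamma(M+n)$, the needed equicontinuity follows from Lipschitz bounds, so the proof of Theorem~\ref{ThmSigma} carries over with minimal change. By the envelope theorem (or a direct quadratic expansion around $\hat\sigma_n(M)$), to first order in $n$ the profile $M\mapsto\ell_n(M,\hat\sigma_n(M))$ can then be replaced by $M\mapsto\ell_n(M,\sigma_{0,n})$.

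The core computation is the expansion of the centered profile $\ell_n(M,\hat\sigma_n)-\ell_n(0,\hat\sigma_n)$. Using the identities $\prod_{i=1}^{K_n-1}(M+i\s)=\s^{K_n-1}\Gamma(M/\s+K_n)/\Gamma(M/\s+1)$ and $(M+1)^{[n-1]}=\Gamma(M+n)/\Gamma(M+1)$, together with the Stirling expansion $\log\Gamma(x+a)-\log\Gamma(x)=a\log x+O(1/x)$ for $a$ in a compact set, I would reduce the profile to
$$\frac{M}{\hat\sigma_n}\log K_n-M\log n+\log\Gamma(1+M)-\log\Gamma\bigl(1+M/\hat\sigma_n\bigr)+o_P(1),$$
uniformly on $[0,\bar M]$. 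Substituting Karlin's limit $K_n/\a_0(n)\to\Gamma(1-\s_0)$ (from \cite{Karlin1967}) gives $\log K_n=\s_0\log n+\log L_0(n)+\log\Gamma(1-\s_0)+o_P(1)$, and expanding $1/\hat\sigma_n=1/\s_0-(\hat\sigma_n-\s_0)/\s_0^2+\ldots$ forces cancellation of the leading $M\log n$, leaving
$$\frac{M}{\s_0}\bigl[\log L_0(n)-(\hat\sigma_n-\s_0)\log n+\log\Gamma(1-\s_0)\bigr]+\log\Gamma(1+M)-\log\Gamma\bigl(1+M/\s_0\bigr)+o_P(1).$$
By Lemma~\ref{LemmaZero} the deterministic part of $\hat\sigma_n-\s_0$ is of order $L_0'(n)n/L_0(n)$ with coefficient arranged so that the bracket converges to $K_0+\log\Gamma(1-\s_0)$ whenever this limit is finite, identifying the profile as the function in the theorem. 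A standard argmax theorem then yields $\hat M_n\to M_0$ in probability; in the degenerate cases $K_0=\pm\infty$ the same expansion shows that the profile becomes monotone on $[0,\bar M]$, forcing the argmax to the appropriate endpoint. Finally, tightness of $\hat M_n\in[0,\bar M]$ together with the uniformity from the first step shows that $\hat\sigma_n=\hat\sigma_n(\hat M_n)$ inherits the conclusion of Theorem~\ref{ThmSigma}.

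The main obstacle is the \emph{second-order} nature of the cancellation: the dominant $M\log n$ terms cancel identically, so the limiting behavior is determined by next-order corrections from both the regular-variation approximation of $\a_0$ and the bias of $\hat\sigma_n$. The hypothesis that $u\mapsto L_0'(u)u$ is slowly varying is precisely what ensures both that the quadratic remainder $(\hat\sigma_n-\s_0)^2\log n$ is $o(1)$ and that Lemma~\ref{LemmaZero} delivers the sharp $L_0'(n)n/L_0(n)$ rate with the specific constant $c_0$ appearing in the definition of $K_0$. Tracking uniformity in $M\in[0,\bar M]$ of all Stirling and Karlin remainders is routine but requires careful bookkeeping, particularly in the regime where $L_0$ is non-constant and $\s_{0,n}-\s_0$ dominates the stochastic fluctuation of $\hat\sigma_n$.
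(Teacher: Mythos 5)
Your proposal follows essentially the same route as the paper: expand the profile log-likelihood, observe that the leading $M\log n$ terms cancel, and use Lemma~\ref{LemmaZero}'s sharp rate for $\s_{0,n}-\s_0$ to identify the surviving constant $K_0$. The main difference is how the $M$-dependence of $\hat\s_{n,M}$ is tamed. The paper derives an explicit stochastic expansion $\hat\s_{n,M}=\tilde\s_n-W_{n,M}\log n/\a_n+O_P(\log n/\a_n^{3/2})$, with $\tilde\s_n$ free of $M$, and then shows directly that plugging this into the $(K_n-1)\log(\cdot)$ and $\sum_l Z_{n,l+1}\log(\cdot)$ pieces of \eqref{EqTobeMaxed} contributes only $O_P(\log n/\sqrt{\a_n})$. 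You instead invoke an envelope-theorem shortcut, replacing $\hat\s_n(M)$ by the fixed center $\s_{0,n}$. Be warned that this replacement is not $o_P(1)$: the difference $\ell_n(M,\hat\s_n(M))-\ell_n(M,\s_{0,n})\approx\tfrac12\a_n\tau_2^2(\hat\s_n(M)-\s_{0,n})^2$ is $O_P(1)$. What saves you is that its $M$-dependent part, of order $\a_n\cdot|\tilde\s_n-\s_{0,n}|\cdot W_{n,M}\log n/\a_n=O_P(\log n/\sqrt{\a_n})$, is $o_P(1)$ and so does not move the argmax; you would need to make this explicit, which effectively recovers the paper's expansion of $\hat\s_{n,M}$. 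One other minor slip: in the displayed expansion the quantity $\hat\s_n$ should really be the deterministic centering $\s_{0,n}$, since by Theorem~\ref{ThmSigma} the stochastic part $\hat\s_n-\s_{0,n}=O_P(\a_n^{-1/2})$ contributes only $O_P(\log n/\sqrt{\a_n})=o_P(1)$ when multiplied by $\log n$; the paper correctly writes the coefficient as $M(\s_0/\s_{0,n}-1)\log n$. With these points filled in, your argument is sound and matches the paper's structure.
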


The proof of the theorem is deferred to Section~\ref{SectionProofThmM}.

\begin{example}
If $L_0$ is constant, then $K_0=\log L_0$ is finite, and it can be arbitrary large or small, depending
on the nonasymptotic properties of the sequence $P_0\{x_j\}$. 
For instance, the choices $p_j=c/j^\a$ for every $j>J$, and $p_j=\eta$, for $j\le J$,
are possible for every constants $c>0$, $\a>1$, $J\in \NN$, and $\eta\in (0,1)$ such that $J\eta+\sum_{j>J}c/j^\a=1$. Then
$\a_0(u)=J+\lfloor (cu)^{1/\a}-J\rfloor=\lfloor (cu)^{1/\a}\rfloor$, for $u>1/\eta$, 
and hence $L_0=c^{1/\a}$, as \eqref{EqApproxAlpha} is determined by $u\ra\infty$.
Depending on the constants $c$ and $\a$, the constant $M_0$ can be any value in $[0,\bar M]$.
\end{example}

\begin{example}
If $L_0(u)=\log u$, then $K_0=\infty$, and hence $M_0=\bar M$.
If $L_0(u)=1/\log u$, then $K_0=-\infty$, and hence $M_0=0$.
\end{example}

\subsection{Forensic application}
\label{SectionForensic}
Consider again the Bayesian model in which $X_1,\ldots, X_n$ are drawn independently from a distribution
$P$ generated from the Pitman-Yor process. A next observation $X_{n+1}$ will either be equal to 
one of the current observations $X_1,\ldots, X_n$ or constitute a new type. If $\tilde X_1,\ldots, \tilde X_{K_N}$ are
the distinct values in $X_1,\ldots, X_n$ and $N_{n,1},\ldots, N_{n,K_n}$ are the multiplicities of these values
in the sample, then it is known that (see  \cite{Pitman(1995),Pitman(2003)}, or \cite{FNBI}, page 465)
\begin{equation}
\label{EqPPF}
\Pr(X_{n+1}=\tilde X_i\given X_1,\ldots, X_n,\s, M)= \frac{N_{n,i}-\s}{M+n},\qquad i=1,\ldots, K_n.
\end{equation}
The remaining mass  $(M+K_n\s)/(M+n)$ is the probability of obtaining a new species, distinct from
$\tilde X_1,\ldots, \tilde X_{K_N}$.
The probability distribution defined by these numbers is known as the \emph{prediction probability function}.

In the forensic setup discussed in \cite{Cereda2017,Cereda2019}, the sample $X_1,\ldots, X_n$ represents
a database of characteristics of individuals (say DNA profiles), and given is a new profile, 
not present in the database, that has been found both at the crime scene and on a defendant who has been charged
with the crime. The prosecution argues that the defendant is the perpetrator who left her profile on the crime scene 
and hence only a single new observation $X_{n+1}$ is involved. The defence argues that the perpetrator
and the defendant are two different individuals, who happen to have the same profile, and hence two independent 
observations $X_{n+1},X_{n+2}$ are involved, which were observed to take the same value. 
The two hypotheses can be made precise in a Bayesian hierarchy
describing a generative model for the database $X_1,\ldots, X_n$, the profile $X_{n+1}$ found at the crime scene
and the profile $Y$ found on the suspect. According to the prosecution the generative model is:
\begin{itemize}
\item[(i)] $(\s, M)\sim \Pi_{\s,M}$.
\item[(ii)] $P\given (\s, M)\sim \text{Pitman-Yor }(\s,M)$.
\item[(iii)] $X_1,\ldots,X_{n+1}\given P,\s, M\iid P$.
\item[(p)] $Y\given X_1,\ldots,X_{n+1}, P,\s, M\sim \d_{X_{n+1}}$.
\end{itemize}
The fourth step (p) expresses that the profile $Y$ found on the defendant is identical to the profile found on the crime scene,
because it results from the same individual $X_{n+1}$ chosen from the population.
The defence agrees with steps (i)-(iii) of the hierarchy, but replaces (p) by:
\begin{itemize}
\item[(d)] $Y\given X_1,\ldots,X_{n+1}, P,\s, M\sim P$.
\end{itemize}
This expresses that the defendant's profile is just another draw $X_{n+2}$ from the population. 
In the observed data the value of this draw happens to be the same as the profile $X_{n+1}$ at the crime scene.

To decide on the case we might evaluate the ratio
of the likelihoods of the full evidence $X_1,\ldots,X_{n+1},Y$ under the two hypotheses. Since the (marginal) likelihood of 
$X_1,\ldots,X_{n+1}$ is determined by (i)--(iii), it is the same under both hypotheses. Hence  the relative likelihood
is the ratio of the conditional likelihoods of $Y$ given $X_1,\ldots, X_{n+1}$. For the prosecution $Y$ 
depends deterministically on $X_1,\ldots, X_{n+1}$ (it must be equal to $X_{n+1}$) and hence the conditional probability of the observed value is
equal to 1. For the defence the conditional likelihood of $Y$ is the probability that an $(n+2)$th observation $X_{n+2}$
happens to be of the same species as $X_{n+1}$. Since $X_{n+1}$ has multiplicity $N_{n+1,K_{n+1}}=1$  among $X_1,\ldots, X_{n+1}$
(by our assumption on the observed data), given $(\s,M)$ this probability is $(1-\s)/(M+n+1)$, as determined by the
prediction probability function \eqref{EqPPF}. The unconditional probability is the integral of this 
 relative to the posterior distribution of $(\s,M)$,
i.e.\ $\EE\bigl((1-\s)/(M+n+1)\given X_1,\ldots, X_{n+1}\bigr)$. The likelihood ratio of prosecution versus defence is
therefore
$$1\ \Big/\  \ \EE\Bigl(\frac{1-\s}{n+1+M}\given X_1,\ldots, X_{n+1}\Bigr).$$

\begin{theorem}
\label{ThmForensic}
Under the assumptions of Theorem~\ref{ThmSigmaPost}, the posterior distribution of $\phi=(1-\s)/(M+n+1)$ satisfies
$$\sup_B\Bigl|\Pi_n(\phi\in B\given X_1,\ldots, X_n)- N\Bigl(\frac{1-\hat\s_n}{n+M+1},\frac{1}{(n+M+1)^2\a_0(n)\tau_2^2}\Bigr)(B)\Bigr|\ra 0.$$
Moreover, the posterior mean $\tilde\phi_n=\EE\bigl(\phi\given X_1,\ldots, X_{n+1}\bigr)$ satisfies
$$\sqrt{\a_n(n)}\Bigl(\frac1{n\tilde \phi_n}-\frac1{1-\s_{n,0}}\Bigr)\weak N\Bigl(0,\frac{\tau_1^2}{(1-\s_0)^4\tau_2^4}\Bigr).$$
These assertions remain true if $M$ is equipped with a prior supported on a compact interval in $[0,\infty)$.
\end{theorem}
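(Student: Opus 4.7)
The plan is to derive everything from Theorem~\ref{ThmSigmaPost} via the affine reparametrization $T_M:\s\mapsto (1-\s)/(M+n+1)$, together with a Slutsky-type calculation for the posterior mean. I keep $M$ fixed first. Since $T_M$ is an affine bijection of $(0,1)$ onto its image and total variation distance is preserved under pushforward, applying $T_M$ to both the posterior of $\s$ and the approximating normal in Theorem~\ref{ThmSigmaPost} immediately yields the first claim: the image of $N(\hat\s_n,1/(\a_0(n)\tau_2^2))$ under $T_M$ is exactly $N\bigl((1-\hat\s_n)/(M+n+1),\,1/((M+n+1)^2\a_0(n)\tau_2^2)\bigr)$, and the supremum over Borel sets transports without loss.

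For the posterior mean, linearity of $T_M$ gives $\tilde\phi_n=(1-\tilde\s_n)/(M+n+1)$, where $\tilde\s_n$ denotes the posterior mean of $\s$ based on $X_1,\ldots, X_{n+1}$. I would then rearrange
\[
\frac{1}{n\tilde\phi_n}-\frac{1}{1-\s_{n,0}}=\frac{M+1}{n(1-\tilde\s_n)}+\frac{\tilde\s_n-\s_{n,0}}{(1-\tilde\s_n)(1-\s_{n,0})},
\]
multiply through by $\sqrt{\a_0(n)}$, and observe that the first summand is $O_P(\sqrt{\a_0(n)}/n)=o_P(1)$ because $\a_0(n)\ll n^2$ (as $\s_0<1$). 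For the second, Slutsky (using $\tilde\s_n\to\s_0$ from Theorem~\ref{ThmSigmaPost} and $\s_{n,0}\to\s_0$ from Lemma~\ref{LemmaZero}) reduces its weak limit to that of $\sqrt{\a_0(n)}(\tilde\s_n-\s_{n,0})/(1-\s_0)^2$. Decomposing $\tilde\s_n-\s_{n,0}=(\tilde\s_n-\hat\s_n)+(\hat\s_n-\s_{n,0})$, Theorem~\ref{ThmSigmaPost} kills the first piece at the rate $\sqrt{\a_0(n)}$ and Theorem~\ref{ThmSigma} gives $\sqrt{\a_0(n)}(\hat\s_n-\s_{n,0})\weak N(0,\tau_1^2/\tau_2^4)$, which produces the claimed limit $N(0,\tau_1^2/((1-\s_0)^4\tau_2^4))$ after division by $(1-\s_0)^2$.

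For the extension to a prior on $M$ supported on a compact interval $[0,\bar M]$, I would appeal to the remarks in the subsection on estimating the precision: the Bernstein-von Mises conclusion of Theorem~\ref{ThmSigmaPost} for $\s$ holds uniformly in $M$ on such a set. Conditional on $M$, the total-variation bound from the first step is then uniform in $M$, and since $|1/(M+n+1)-1/(n+1)|=O(n^{-2})$ uniformly on $[0,\bar M]$, the explicit $M$ in the approximating normal may be replaced by any element of the interval with a perturbation negligible at rate $\sqrt{\a_0(n)}$. Marginalizing over the posterior of $M$ then preserves both assertions, and the same algebraic identity governs the posterior mean. The principal obstacle is precisely this uniformity-in-$M$ step: it requires revisiting the Laplace-type expansion of \eqref{EqLikelihoodSigma} underlying Theorem~\ref{ThmSigmaPost} and checking that its error estimates are uniform for $M\in[0,\bar M]$, which the paper already flags as a straightforward extension.
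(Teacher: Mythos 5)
Your proposal is correct and follows essentially the same route as the paper's proof: the first assertion via affine pushforward of the normal approximation from Theorem~\ref{ThmSigmaPost}, and the second via the algebraic identity relating $1/(n\tilde\phi_n)$ to $\tilde\s_n$ followed by Slutsky's lemma together with Theorems~\ref{ThmSigmaPost} and~\ref{ThmSigma}. Your write-up merely spells out details the paper leaves implicit (the TV-invariance under affine maps, the clean form of the identity, the explicit uniformity-in-$M$ check), but the underlying decomposition $\tilde\s_n-\s_{0,n}=(\tilde\s_n-\hat\s_n)+(\hat\s_n-\s_{0,n})$ and the appeal to the two earlier theorems are the same.
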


\begin{proof}
The first assertion is immediate from Theorem~\ref{ThmSigmaPost} and the definition of $\phi$.
For the second assertion we note that 
$$\frac1{n\tilde \phi_n}-\frac1{1-\s_{0,n}}=
=\frac{\tilde\s_n-\s_{0,n}+(1-\tilde\s_n)(M+1)/(M+n+1)}{(1-\tilde\s_n)/(M+n+1)}.$$
By Theorem~\ref{ThmSigmaPost} the sequence 
$\sqrt{\a_0(n)}(\tilde\s_n-\s_{n,0})$ is asymptotically equivalent to the sequence
$\sqrt{\a_0(n)}(\hat\s_n-\s_{0,n})$, which tends to the $N(0,\tau_1^2/\tau_2^4)$-distribution,
by Theorem~\ref{ThmSigma}. The second assertion follows by Slutzky's lemma.
\end{proof}

\section{Proofs}
\label{SectionProofs}
The logarithm of the likelihood \eqref{EqLikelihoodSigma} can be written
\begin{align}
\nonumber
\Lambda_n(\s,M)&=\sum_{l=1}^{K_n-1}\log (M+l\s)+\!\!\sum_{j=1: N_{n,j}\ge 2}^{K_n}\!\!\sum_{l=1}^{N_{n,j}-1}\log (l-\s)-\sum_{i=1}^{n-1} \log (M+i)\\
&=\sum_{l=1}^{K_n-1}\log (M+l\s)+\sum_{l=1}^{n-1}\log (l-\s)Z_{n,l+1}-\sum_{i=1}^{n-1} \log (M+i),
\label{EqLogLikelihood}
\end{align}
where $Z_{n,l}=\#(1\le j\le K_n: N_{n,j}\ge l)$ is the number of distinct values of multiplicity at least $l$ in
the sample $X_1,\ldots, X_n$. (In the case that all observations are distinct and hence $N_{n,j}=1$ for every $j$,
the second term of the likelihood is equal to 0.)

For the proofs of Theorems~\ref{ThmSigma} and~\ref{ThmSigmaPost}, we fix the argument $M$ and drop it from
the notation. The concavity of the logarithm shows that the log likelihood is a strictly concave function of $\s$.
For $\s\downarrow0$, it tends to a finite value if $M>0$ and to $-\infty$ if $M=0$, while for $\s\uparrow1$ it tends to $-\infty$ if the term
with  $l=1$ is present in the second sum, i.e.\ if there is at least one tied observation. This happens
with probability tending to 1 as $n\ra\infty$. The derivative of the log likelihood with respect to $\s$ is equal to 
\begin{equation}
\label{EqLambdaprime}
\Lambda_n'(\s)=\sum_{l=1}^{K_n-1}\frac{l}{M+l\s}-\sum_{l=1}^{n-1}\frac{Z_{n,l+1}}{l-\s}.
\end{equation}
The left limit at $\s=0$ is $\Lambda_n'(0)=\frac12 K_n(K_n-1)/M-\sum_{l=1}^{n-1}l^{-1}Z_{n,l+1}$.
Since $Z_{n,l}\le Z_{n,1}=K_n$, a crude bound on the sum is $K_n\log n$, which shows that the derivative
at $\s=0$ tends to infinity if $K_n\gg \log n$. In that case the unique maximum of the log likelihood 
in $[0,1]$ is taken in the interior of the interval,  and hence $\hat\s_n$ satisfies $\Lambda_n'(\hat\s_n)=0$.

Set $\a_n=\a_0(n)$. Under the condition that $\a_0$ is regularly varying of exponent $\s_0\in(0,1)$, the sequence 
$\a_n$ is of the order $n^{\s_0}$ up to slowly varying terms.
By Theorems~9 and~1` of \cite{Karlin1967},
the sequence $K_n/\a_n$ tends almost surely to $\Gamma(1-\s_0)$ and hence in particular $K_n\gg \log n$,
and the conclusion of the preceding paragraph that $\Lambda_n'(\hat\s_n)=0$ pertains.

By Lemma~\ref{LemmaZero},  the functions $E_{0,n}/\a_n$, for 
 $E_{0,n}$ defined in \eqref{EqDefEn}, converge to the function $E_0$ defined by
\begin{equation}
\label{EqDefE}
E_0(\s)=\frac{\Gamma(1-\s_0)}\s-\sum_{m=1}^\infty \frac{\Gamma(m+1-\s_0)}{m!(m-\s)},
\end{equation}
and this function vanishes at $\s=\s_0$. By monotonicity of these functions, the
zeros $\s_{0,n}$ of the functions $E_{0,n}$ tend to the zero $\s_0\in(0,1)$ of the limit function.

\subsection{Proof of Theorem~\ref{ThmSigma}}
\label{SectionProofThmSigma}
The monotonicity of $\Lambda_n'$, the definition of $\hat\s_n$ and the fact that $-E_{0,n}(\sigma_{0,n}) = 0$ give that 
\begin{align*}
&\Pr\bigl(\sqrt {\a_n}(\hat\s_n-\s_{0,n})\le x\bigr)
=\Pr\Bigl(\Lambda_n'\Bigl(\s_{0,n}+\frac{x}{\sqrt{\a_n}}\Bigr)\le 0\Bigr)\\
&\qquad =\Pr\biggl(\frac{1}{\sqrt{\a_n}}\Bigl[\Lambda_n'\Bigl(\s_{0,n}+\frac{x}{\sqrt{\a_n}}\Bigr)-E_{0,n}\Bigl(\s_{0,n}+\frac{x}{\sqrt{\a_n}}\Bigr)\Bigr]\\
&\qquad\qquad\qquad\qquad\qquad\qquad\le 
-\frac{1}{\sqrt{\a_n}}\Bigl[E_{0,n}\Bigl(\s_{0,n}+\frac{x}{\sqrt{\a_n}}\Bigr)-E_{0,n}(\s_{0,n}) \Bigr]\biggr).
\end{align*}
The variables in the left side of the last probability are asymptotically normal by
Lemma~\ref{LemAN}, while by the mean value theorem
the numbers on the right side of the inequality are equal to $-xE_{0,n}'(\s_n)/\a_n$ for numbers $\s_n$ between
$\s_{0,n}$ and $\s_{0,n}+x/\sqrt{\a_n}$ and hence  $\s_n\ra \s_0$.
Thus $-xE_{0,n}'(\s_n)/\a_n\ra x\tau_2^2$, by Lemma~\ref{LemmaZero}. The asymptotic normality of
$\hat\s_n$ follows.

\subsection{Proof of Theorem~\ref{ThmSigmaPost}}
\label{SectionProofThmSigmaPost}
We first prove that the posterior distribution is $\sqrt{\a_n}$-consistent:
$\Pi_n\bigl (\sqrt{\a_n}|\s-\s_{0,n}|>m_n\given X_1,\ldots, X_n\bigr)\ra 0$ in probability for any $m_n\ra\infty$.
By the monotonicity of $\Lambda_n'$ and the fact that $\Lambda_n'(\hat\s_n)=0$, for given $\s_n>\hat\s_n$, 
\begin{alignat*}{3}\
\Lambda_n(\s)&\ge\Lambda_n(\s_n),&\qquad &\text{ if } \hat\s_n<\s<\s_n,\\
\Lambda_n(\s)&\le\Lambda_n(\s_n)+\Lambda_n'(\s_n)(\s-\s_n),&\qquad &\text{ if } \s>\s_n.
\end{alignat*}
It follows that
\begin{align*}
\Pi_n\bigl (\s>\s_n\given X_1,\ldots, X_n\bigr)
&=\frac{\int_{\s_n}^1 e^{\Lambda_n(\s)}\,d\Pi_\s(\s)}{\int_0^1 e^{\Lambda_n(\s)}\,d\Pi_\s(\s)}\\
&\le \frac{\int_{\s_n}^1 e^{\Lambda_n(\s_n)+\Lambda_n'(\s_n)(\s-\s_n)}\,d\Pi_\s(\s)}{\int_{\hat\s_n}^{\s_n} e^{\Lambda_n(\s_n)}\,d\Pi_\s(\s)}\\
&\lesssim \frac{\int_0^\infty e^{\Lambda_n'(\s_n)u}\,du}{\s_n-\hat\s_n}=\frac1{-\Lambda_n'(\s_n)(\s_n-\hat\s_n)},
\end{align*}
where the proportionality constant depends on the density of $\Pi_\s$ only. If we choose $\s_n=\s_{0,n}+x/\sqrt{\a_n}$, then,
by Lemmas~\ref{LemAN} and~\ref{LemConvergenceExpectations}, since $E_{0,n}(\s_{0,n})=0$,
$$\frac{\Lambda_n'(\s_n)}{\sqrt{\a_n}}=O_P(1)+\frac{E_{0,n}(\s_{n})}{\sqrt{\a_n}}
=O_P(1)+ \frac{E_{0,n}'(\tilde\s_{n})}{\a_n}x=O_P(1)-\tau_2^2x.$$
Theorem~\ref{ThmSigma} gives that $\sqrt{\a_n}(\s_n-\hat\s_n)=x+O_p(1)$,  and 
hence the probability that $-\Lambda_n'(\s_n)(\s_n-\hat\s_n)$ is bigger
than some fixed constant can be made arbitrarily large by choosing large enough $x$. This shows that the
preceding display tends to zero in probability for $\s_n=\s_{0,n}+m_n/\sqrt{\a_n}$ and any $m_n\ra\infty$.
The probability of the events $\s_n>\hat\s_n$, on which the preceding displays are valid, 
then also tends to one.
Combined with a similar argument on the left tail of the posterior distribution, this shows that 
the posterior contracts to $\s_{0,n}$ at rate $1/\sqrt{\a_n}$.

Since $\sqrt{\a_n}(\hat\s_n-\s_{0,n})=O_p(1)$, by Theorem~\ref{ThmSigma},
there exists $m_n\ra\infty$ so that the sets $C_n:=\{\s: \sqrt{\a_n}|\s-\hat\s_n|\le m_n\}$
have posterior probability tending to one. The total variation measure
between the posterior measure $\Pi_n(\s\in \cdot\given X_1,\ldots, X_n)$ and the
conditioned posterior measure $\Pi_n(\s\in \cdot\given X_1,\ldots, X_n,\s\in C_n)$ is bounded
above by $2\Pi_n(\s\notin C_n\given X_1,\ldots, X_n)$ and  hence tends to zero in probability.
Thus it suffices to prove the Gaussian approximation to the conditioned posterior measure.

By Lemma~\ref{LemGCLambdpp} and the fact that $\Lambda_n'(\hat\s_n)=0$, a second order Taylor expansion gives
$$\sup_{\s\in C_n} \Bigl|\frac{\Lambda_n(\s)-\Lambda_n(\hat\s_n)}{\a_n(\s-\hat\s_n)^2}+\frac12 \tau_2^2\Bigr|\prob 0.$$
We conclude that there exist random variables  $\e_n$ that tend to zero in probability with,
for every $\s\in C_n$, 
$$-\frac12(\s-\hat\s_n)^2\a_n(\tau_2^2+\e_n)
\le \Lambda_n(\s)-\Lambda_n(\hat\s_n)
\le -\frac12(\s-\hat\s_n)^2\a_n(\tau_2^2-\e_n).$$
Then, for $\pi_\s$ a density of the prior measure $\Pi_\s$,
\begin{align*}
\Pi_n\bigl (\s\in B\given X_1,\ldots, X_n,\s\in C_n\bigr)
&\le \frac{\int_{B\cap C_n} e^{-(\s-\hat\s_n)^2\a_n(\tau_2^2-\e_n)/2}\,d\s}
{\int_{C_n} e^{-(\s-\hat\s_n)^2\a_n(\tau_2^2+\e_n)/2}\,d\s}
\frac{\displaystyle{\sup_{\s\in C_n}}\pi_\s(\s)}{\displaystyle{\inf_{\s\in C_n}}\pi_\s(\s)}
\end{align*}
By changing variables we see that $\Pi_n\bigl(\sqrt{\a_n}(\s-\hat\s_n)\in B\given X_1,\ldots, X_n,\s\in C_n)$
can be bounded above by
\begin{align*}
&\frac{\int_{-m_n}^{m_n}1_B(s) e^{-s^2(\tau_2^2-\e_n)/2}\,ds}{\int_{-m_n}^{m_n}e^{-s^2(\tau_2^2+\e_n)/2}\,ds}(1+o_P(1))\\
&\qquad\qquad= \frac{\Pr\bigl(Z/\sqrt{\tau_2^2-\e_n}\in B\cap (-m_n,m_n)\bigr)}{\Pr\bigl(Z/\sqrt{\tau_2^2+\e_n}\in (-m_n,m_n)\bigr)}(1+o_P(1)),
\end{align*}
for $Z$ a standard normal variable and the probabilities understood to refer to $Z$ only. This tends in
probability to $\Pr(Z/\tau_2\in B)$, uniformly in $B$.

By the same method, switching $+$ and  $-$ signs and $\sup$ and $\inf$, we can derive the same expression as an
asymptotic lower bound.  This concludes the proof of the first assertion of Theorem~\ref{ThmSigmaPost}.

Because convergence in total variation norm implies convergence of the expectations of bounded, measurable functions,
to prove the convergence of the posterior mean, it suffices to show that 
$$\int_{ \sqrt{\a_n}(\s-\hat\s_n)>m_n}\sqrt{\a_n}|\s-\hat\s_n|\,d\Pi_n(\s\given X_1,\ldots, X_n)\ra 0,$$ 
in probability, for every $m_n\ra\infty$, combined with a similar estimate on the left tail. 
By the argument at the beginning of the proof this expectation is bounded above by, for $\s_n=\hat\s_n+x/\sqrt{\a_n}$ with
$x>0$,
\begin{align*}
&\frac{\int_{\sqrt{\a_n}(\s-\hat\s_n)>m_n} \sqrt{\a_n}(\s-\hat\s_n)e^{\Lambda_n'(\s_n)(\s-\s_n)}\,d\Pi_\s(\s)}{\s_n-\hat\s_n}\\
&\qquad\qquad\qquad\qquad= \frac{\int_{m_n-x}^\infty(u+x)e^{\Lambda_n'(\s_n)u/\sqrt{\a_n}}\,du}{x}
\end{align*}
Because $\Lambda_n'(\s_n)/\sqrt {\a_n}=\sqrt{\a_n}(\s_n-\s_{0,n})\Lambda_n''(\tilde\s_n)/\a_n=(x+O_P(1))(-\tau_2^2+o_P(1))$,
the exponential in the integrand is with probability arbitrarily close to 1 bounded above by $e^{-cu}$, for some $c>0$, if
$x$ is chosen large enough. On the event  where this is the case the integral is bounded above by a multiple of 
$\int_{m_n-x}^\infty (u+x)e^{-cu}\,du\ra 0$, as $n\ra\infty$. 
It follows that the right side tends to zero in probability.


\subsection{Proof of Theorem~\ref{ThmM}}
\label{SectionProofThmM}
The maximum likelihood estimator of $M$ can be obtained in two steps:
first we maximize the log likelihood \eqref{EqLogLikelihood} over $\s$ for fixed $M$, yielding $\hat\s_{n,M}$, and next we 
maximize the ``profile log likelihood'' $M\mapsto \Lambda_n(\hat\s_{n,M}, M)$.
From Theorem~\ref{ThmSigma} we know that $\hat\s_{n,M}$ will be contained in a neighbourhood of
$\s_0$, with probability tending to 1. 
To proceed further,  we first  obtain a stochastic expansion of $\hat\s_{n,M}$ that refines this result.

The estimator $\hat\s_{n,M}$ solves $\Lambda_n'(\s,M)=0$, where the prime means the partial derivative with respect
to $\s$. The derivative can be written
\begin{align*}
\Lambda_n'(\s,M)&=\sum_{i=1}^{K_n-1}\frac i{M+i\s}-\sum_{l=1}^{n-1}\frac1{l-\s}Z_{n,l+1}
&=\frac{K_n}{\s}-G_n(\s)-\frac{h_{\s,M}(K_n)}{\s},
\end{align*}
for $G_n(\s)=\sum_{l=1}^{n-1}\frac1{l-\s}Z_{n,l+1}$, and 
\begin{equation}
\label{EqDefhsM}
h_{\s,M}(k)=1+\sum_{l=1}^{k-1}\frac M{M+l\s}\le 1+\frac M\s\log\Bigl(1+\frac {k\s}M\Bigr).
\end{equation}
Expansion of the equation $\Lambda_n'(\hat\s_{n,M},M)=0$ around the zero $\s_{0,n}$ of the function $E_{0,n}$ in \eqref{EqDefEn} gives
$$\hat \s_{n,M}-\s_{0,n}=-\frac{\Lambda'_n(\s_{0,n},M)}{\Lambda''_n(\s_{0,n},M)+\Lambda'''_n(\tilde \s_{n,M},M)(\hat\s_{n,M}-\s_{0,n})/2}.$$
It is shown in Lemma~\ref{LemAN} that 
$V_n:=\bigl(K_n/\s_{n,0}-G_n(\s_{n,0})\bigr)/\sqrt{\a_n}$, which is free of $M$, tends to a centered normal distribution, and
it is shown in Lemma~\ref{LemGCLambdpp} that the sequence $\bigl(-K_n/\s_{0,n}^2-G_n'(\s_{0,n})\bigr)/\a_n$ tends in probability
to $-\tau_2^2:=E_0'(\s_0)$. It can similarly be seen that $Z_n:= \bigl(2K_n/\s_{0,n}^3-G_n''(\s_{0,n})\bigr)/\a_n$ tends in probability to
a constant  $z$. Furthermore, it follows from the bound in \eqref{EqDefhsM} and the fact that $K_n/\a_n$ converges almost surely,
that $h_{\s_{0,n},M}(K_n)=O_P(\log n)$, uniformly in $M$ belonging to compacta, 
and from the definition of $h_{\s,M}(K_n)$ that its first and second partial derivatives relative to $\s$
are of the same order. Therefore, uniformly in $M$,
\begin{align*}
\Lambda'_n(\s_{0,n},M)&=\frac{K_n}{\s_{0,n}}-G_n(\s_{0,n})-\frac{h_{\s_{0,n},M}(K_n)}{\s_{0,n}}=\a_n^{1/2}V_n-\frac{h_{\s_{0,n},M}(K_n)}{\s_{0,n}},\\
\Lambda''_n(\s_{0,n},M)&=\!\frac{-K_n}{\s_{0,n}^2}-\!G_n'(\s_{0,n})-\!\frac d{d\s}\Bigl[\frac{h_{\s,M}(K_n)}{\s}\Bigr]_{\s=\s_{0,n}}\!\!\!\!\!
=\!-\a_n\tau_2^2+ \!O_P(\log n),\\
\Lambda'''_n(\tilde \s_{n,M},M)&=\!\frac{2K_n}{\s_{0,n}^3}-\!G_n''(\s_{0,n})-\!\frac {d^2}{d\s^2}\Bigl[\frac{h_{\s,M}(K_n)}{\s}\Bigr]_{\s=\s_{0,n}}\!\!\!\!\!
=\!\a_nZ_n+ O_P(\log n).
\end{align*}
Substituting these expansions in the preceding display gives that
\begin{align*}
\hat\s_{n,M}-\s_{0,n}&=\frac{\a_n^{-1/2}V_n-\a_n^{-1} h_{\s_{0,n},M}(K_n)/\s_{0,n}}{\tau_2^2-Z_n(\hat\s_{n,M}-\s_{0,n})+O_P(\log n/\a_n)}\\
&=\!\Bigl(\frac{V_n}{\a_n^{1/2}\tau_2^2}- \frac{h_{\s_{0,n},M}(K_n)}{\a_n\s_{0,n}\tau_2^2}\Bigr)
\Bigl(1+\frac{Z_n(\hat\s_{n,M}-\s_{0,n})}{\tau_2^2}+ O_P\Bigl(\frac{\log n}{\a_n}\Bigr)\Bigr).
\end{align*}
We can solve $\hat\s_{n,M}-\s_{0,n}$ from this as 
$$(\hat\s_{n,M}-\s_{0,n})\Bigl(1-\frac{V_nZ_n}{\a_n^{1/2}\tau_2^4}\Bigr)=
\frac{V_n}{\a_n^{1/2}\tau_2^2}-\frac{h_{\s_{0,n},M}(K_n)}{\a_n\s_{0,n}\tau_2^2}+O_P\Bigl(\frac{\log n}{\a_n^{3/2}}\Bigr).$$
Hence, for $W_{n,M}=h_{\s_{0,n},M}(K_n)/(\s_{0,n}\tau_2^2\log n)$,
\begin{align*}
\hat\s_{n,M}&=\s_{0,n}+\frac{V_n}{\a_n^{1/2}\tau_2^2}+\frac{V_n^2Z_n}{\a_n\tau_2^6}-\frac{W_{n,M}\log n} {\a_n}+O_P\Bigl(\frac{\log n}{\a_n^{3/2}}\Bigr)\\
&=:\tilde\s_n-W_{n,M}\log n/\a_n+O_P(\log n/\a_n^{3/2}).
\end{align*}
The variables $V_n$, $Z_n$ and $W_{n,M}$ are all bounded in probability.
The quantity $\tilde\s_n$ is defined as the sum of the first three terms on the
right in the preceding line, and does not depend on $M$.

We are now ready to expand the profile likelihood $\Lambda_n(\hat\s_{n,M},M)$. 
The first and third terms on the far right side of \eqref{EqLogLikelihood} can
be expanded with the help of Lemma~\ref{LemmaSum} as, 
uniformly in $M$ in compacta,
\begin{align*}
\sum_{l=1}^{K_n-1}\log (M+l\hat\s_{n,M})&=K_n\log K_n+K_n\log (\hat \s_{n,M}/e)+\Bigl(\frac{M}{\hat\s_{n,M}}-\frac 12\Bigr)\log K_n\\
&\qquad+\log \frac{\sqrt{2\pi}}{\hat\s_{n,M}}-\log \Gamma\Bigl(1+\frac{M}{\hat\s_{n,M}}\Bigr)+O_P\Bigl(\frac{1}{K_n}\Bigr),\\
\sum_{i=1}^{n-1} \log (M+i)&=n\log n-n+\Bigl(M-\frac12\Bigr)\log n+\\
&\qquad+\log \sqrt{2\pi}-\log \Gamma(1+M)+O\Bigl(\frac{1}{n}\Bigr).
\end{align*}
To find the point of maximum $M$, we can drop terms that depend on $K_n$ and $n$ only, and
add terms that do not depend on $M$. Thus finding the maximizer of the profile likelihood is 
equivalent to finding the maximizer of 
\begin{align}
&(K_n-1)\log \frac{\hat \s_{n,M}}{\tilde\s_n}+\sum_{l=1}^{n-1}\log \frac{l-\hat\s_{n,M}}{l-\tilde \s_n}Z_{n,l+1}
+\frac{M\log K_n}{\hat\s_{n,M}}-M\log n\nonumber\\
&\qquad\qquad \qquad+\log \Gamma(1+M)-\log \Gamma\Bigl(1+\frac{M}{\hat\s_{n,M}}\Bigr)+O_P\Bigl(\frac1{K_n}\Bigr).
\label{EqTobeMaxed}
\end{align}
The sum of the first two terms can be expanded as
\begin{align*}
&(K_n-1)\log \Bigl(1-\frac{W_{n,M}\log n}{\a_n\tilde\s_n}+O_P\Bigl(\frac{\log n}{\a_n^{3/2}}\Bigr)\Bigr)
-\sum_{l=1}^{n-1}\log\Bigl(1-\frac{ \hat\s_{n,M}-\tilde\s_n}{l-\tilde\s_n}\Bigr)Z_{n,l+1}\\
&\qquad=-\frac{K_nW_{n,M}\log n}{\a_n\tilde\s_n}
-\sum_{l=1}^{n-1}\Bigl(\frac{ \hat\s_{n,M}-\tilde\s_n}{l-\tilde\s_n}\Bigr)Z_{n,l+1}+O_P\Bigl(\frac{\log n}{\sqrt \a_n}\Bigr)\\
&\qquad=-\frac{W_{n,M}\log n}{\a_n}\Bigl(\frac{K_n}{\tilde\s_n}- \sum_{l=1}^{n-1}\frac{Z_{n,l+1}}{l-\tilde\s_n}\Bigr)
+O_P\Bigl(\frac{\log n}{\sqrt \a_n}\Bigr)\\
&\qquad=-\frac{W_{n,M}\log n}{\a_n}\bigl(\sqrt{\a_n}\tilde V_n+E_{0,n}(\tilde\s_n)\bigr)+O_P\Bigl(\frac{\log n}{\sqrt{\a_n}}\Bigr),
\end{align*}
where $\tilde V_n$ tends to a centered normal distribution by Lemma~\ref{LemAN}. The right side
is of the order $O_P(\log n/\sqrt {\a_n})$.

Since $L_n=\sqrt{\a_n}\bigl(K_n/\a_n-\Gamma(1-\s_0)\bigr)$ tends to a centered normal
distribution, we have $K_n=\a_n\Gamma(1-\s_0)+O_P(\sqrt{\a_n})$, so that 
$\log K_n=\log \a_n+\log\Gamma(1-\s_0)+O_P(1/\sqrt{\a_n})$.
Therefore, if $\a_n=n^{\s_0}\bar L_0(n)$, then the sum of the third and fourth terms on the
right of \eqref{EqTobeMaxed} are
\begin{align*}
&M\!\Bigl[\frac{\s_0\log n\!+\!\log \bar L_0(n)\!+\!\log \Gamma(1-\s_0)\!+\!O_P(1/\!\sqrt{\a_n})}{\s_{0,n}}\Bigr]\!
\Bigl[\!1\!+\! O_P\Bigl(\!\frac{1}{\sqrt{\a_n}}\!\Bigr)\!\Bigr]\!-\!M\log n\\
&\qquad=M\Bigl(\frac{\s_0}{\s_{0,n}}-1\Bigr)\log n 
+\frac M{\s_{0,n}}\bigl(\log \bar L_0(n)+\log\Gamma(1-\s_0)\bigr)+O_P\Bigl(\frac{\log n}{\sqrt{\a_n}}\Bigr).
\end{align*}
By Lemma~\ref{LemmaZero}, $\s_{0,n}-\s_0=O(n^{-\d})+O\bigl(L_0'(n)n/L_0(n)\bigr)$,
whence the right side is of the order $(\log n) L_0'(n)n/L_0(n)+\log L_0(n)+\log \Gamma(1-\s_0)$.

We conclude that up to terms that do not depend on $M$, the profile log likelihood \eqref{EqTobeMaxed}
is equal to, for a constant $c>0$,
\begin{align*}
&\frac{M}{\s_0}\Bigl[\frac{L_0'(n)nc\log n}{L_0(n)}+\log L_0(n)+\log \Gamma(1-\s_0)\Bigr]\\
&\qquad\qquad\qquad\qquad\qquad+\log \Gamma(1+M)-\log \Gamma\Bigl(1+\frac{M}{\s_0}\Bigr)+o_P(1).
\end{align*}
If the term within square brackets tends to infinity,  then asymptotically this term dominates and
the maximizing value $\hat M_n$ will tend to the end point of the interval. If this term tends to minus
infinity, then it also dominates and the maximum value will tend to 0. If the term converges to a limit,
then the whole expression tends to a function of the form 
$M\mapsto aM+\log \Gamma(1+M)-\log \Gamma(1+{M}/{\s_0})$. This function is concave and tends
to $-\infty$ as $M\ra\infty$. Its derivative at zero may be both positive or negative, depending on $a$ and $\s_0$,
and the point of maximum of the function may be at zero or at some positive location, possibly the upper limit
$\bar M$ of the parameter. The sequence $\hat M_n$ will tend to this point of maximum.

\subsection{Lemmas}
For $g_\s(m)=\sum_{l=1}^{m-1}\frac {1}{l-\s}$, set
\begin{align}
\label{EqDeftau1Echt}
\tau_1^2&=\frac{(2^{\s_0}-1)\Gamma(1-\s_0)}{\s_0^2}
+\sum_{m=1}^\infty \frac{\Gamma(m-\s_0)(g_{\s_0}(m+1)+g_{\s_0}(m))}{m!}\\
&\qquad-\sum_{k=2}^\infty\sum_{m=1}^\infty\frac{g_{\s_0}(k)\Gamma(k+m+1-\s_0)}{k!m!2^{k+m-\s_0}(m-\s_0)}
 -\sum_{m=2}^\infty \frac{g_{\s_0}(m) \Gamma(m-\s_0)}{m!\,2^{m-\s_0-1}}.\nonumber
\end{align}

\begin{lemma}
\label{LemAN}
For any $\s_n\prob\s_0\in (0,1)$ and $M_n=o\bigl(\sqrt{\a_0(n)}/\log n\bigr)$, we have
$\a_0(n)^{-1/2}\bigl(\Lambda_n'(\s_n,M_n)-E_{0,n}(\s_n)\bigr)\weak N(0,\tau_1^2)$.
\end{lemma}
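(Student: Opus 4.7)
The plan is to Poissonize, apply a Lindeberg central limit theorem to a sum of independent variables indexed by atoms of $P_0$, de-Poissonize back to the multinomial setting, and absorb the perturbations in $\s_n$ and $M_n$ as negligible remainders.

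First I would drop the $M$-dependent piece. Writing, as in the proof of Theorem~\ref{ThmM},
$$\Lambda_n'(\s,M)=\frac{K_n}{\s}-G_n(\s)-\frac{h_{\s,M}(K_n)}{\s},\qquad G_n(\s)=\sum_{l=1}^{n-1}\frac{Z_{n,l+1}}{l-\s},$$
the bound $h_{\s,M}(k)\le 1+(M/\s)\log(1+k\s/M)$ together with $K_n=O_P(\a_n)$ from \cite{Karlin1967} and the hypothesis $M_n=o(\sqrt{\a_n}/\log n)$ gives $h_{\s_n,M_n}(K_n)/\s_n=o_P(\sqrt{\a_n})$. To replace the random $\s_n$ by the fixed $\s_0$, I would establish asymptotic equicontinuity of $\s\mapsto \a_n^{-1/2}(K_n/\s-G_n(\s)-E_{0,n}(\s))$ on a neighbourhood of $\s_0$: under Poissonization, the derivative in $\s$ of this centered process is again a sum of independent atom-level contributions with variance $O(\a_n)$, so a simple chaining argument yields an $O_P(1)$ asymptotic Lipschitz constant; combined with $\s_n\prob\s_0$ this reduces the claim to the CLT for $\a_n^{-1/2}(K_n/\s_0-G_n(\s_0)-E_{0,n}(\s_0))$.

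For this fixed-$\s_0$ CLT I Poissonize: introducing $N\sim\mathrm{Poi}(n)$ independent of the sampling, the atom counts $\tilde N_j\sim\mathrm{Poi}(np_j)$ become independent across atoms $j$, and the Poissonized version decomposes as
$$\tilde\Lambda_n'(\s_0)=\sum_j f_n(\tilde N_j;\s_0),\qquad f_n(m;\s)=\frac{1_{m\ge 1}}{\s}-\sum_{l=1}^{(m\wedge n)-1}\frac{1}{l-\s},$$
a sum of independent variables. Expanding the Poisson expectations in power series in $s_j=np_j$, integrating by parts against $\a_0(n/s)$, and using the identity $H'(s)-H(s)=\sum_{m\ge 1}s^m/(m!(m-\s))$ for $H(s)=\sum_{m\ge 1}g_\s(m)s^m/m!$, identifies $\EE\tilde\Lambda_n'(\s_0)$ with $E_{0,n}(\s_0)$ up to a $o(\sqrt{\a_n})$ error from the truncation at $l\le n-1$. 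Karamata's theorem applied to the regularly varying $\a_0$ then gives $\sum_j\var(f_n(\tilde N_j;\s_0))=\a_n\tau_1^2(1+o(1))$, with the explicit formula \eqref{EqDeftau1Echt} emerging from expanding the Poisson moments of $(1/\s_0-g_{\s_0}(\cdot))^2$ term by term. The Lindeberg condition is trivial because $|f_n(\tilde N_j;\s_0)|\le C\log n=o(\sqrt{\a_n})$, yielding the Poissonized CLT.

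The main obstacle is de-Poissonization, transferring the CLT from $\tilde\Lambda_n'(\s_0)$ back to $\Lambda_n'(\s_0)$. Conditional on $N=n$ the Poissonized sample has the law of the original multinomial sample, so I must show that this conditioning perturbs neither the center nor the variance at rate $\sqrt{\a_n}$. Following \cite{Karlin1967}, I would treat the pair $\bigl(\tilde\Lambda_n'(\s_0)-E_{0,n}(\s_0),\,N-n\bigr)$ as a joint sum of independent atom-level contributions, hence jointly asymptotically Gaussian. A Stein-identity computation gives $\cov(\tilde\Lambda_n'(\s_0),N)=O(\a_n)$, while $\var(N)=n$ and $\var(\tilde\Lambda_n'(\s_0))\sim\a_n\tau_1^2$, so the squared correlation is $O(\a_n/n)=o(1)$; hence conditioning on $\{N=n\}$ leaves the asymptotic law of the first coordinate unchanged. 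Making this swap quantitative at the $\sqrt{\a_n}$ rate, given $\a_n\ll n$, is the delicate bookkeeping step.
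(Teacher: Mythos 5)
Your high-level architecture matches the paper's: drop the $M$-term using the $\log$-bound on $h_{\s,M}$ and $K_n=O_P(\a_n)$, Poissonize so atom counts become independent, apply a Lindeberg CLT to the infinite array, and then de-Poissonize. The moment identifications via power-series expansion and the identity $g_\s(m+1)-g_\s(m)=1/(m-\s)$ are exactly what the paper's Lemma~\ref{LemConvergenceExpectations} carries out. Where you depart from the paper is in two places, one cosmetic and one substantive.

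The cosmetic difference is your treatment of $\s_n\prob\s_0$. You propose a chaining/equicontinuity step to reduce to fixed $\s_0$. The paper instead evaluates the first and second Poisson moments directly at the moving argument $\s_n$: Lemma~\ref{LemConvergenceExpectations} is stated ``for any $\s_n\ra\s$'', so the mean and variance are computed at $\s_n$ and converge by that lemma. Your route is fine and arguably more careful for genuinely random $\s_n$, but it is extra machinery the paper avoids.

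The substantive difference, and the genuine gap, is the de-Poissonization. You propose to view $\bigl(\tilde\Lambda_n'(\s_0)-E_{0,n}(\s_0),\,N-n\bigr)$ as jointly asymptotically Gaussian, observe the squared correlation is $O(\a_n/n)=o(1)$, and conclude that conditioning on $\{N=n\}$ does not change the limit law. This is a heuristic, not a proof: conditioning a sum of independent variables on the event $\{N=n\}$ (probability $\asymp n^{-1/2}$) is not the same as conditioning the Gaussian approximation, and vanishing correlation alone does not give a \emph{conditional} CLT. You would need a local limit theorem for $N$ together with uniform control of the conditional characteristic function, or an anti-concentration argument, to make the ``delicate bookkeeping'' rigorous. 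The paper takes a cleaner and genuinely different route: Lemma~\ref{LemmaPoissonization} smooths against a Lipschitz function and shows that the unconditional Poissonized CLT transfers to $k=n$ provided $a_n(V_{k_n,n}-V_{n,n})\prob0$ for $|k_n-n|=O(\sqrt n)$, and the latter stability is where the real work lies — Lemma~\ref{LemmaInvariancenkn} proves it by exploiting the stochastic monotonicity of $\text{Binomial}(n,p_j)$ in $n$ and the monotonicity of $m\mapsto 1_{m\ge1}$ and $m\mapsto g_\s(m)$, which reduces the problem to first-moment bounds, followed by a lengthy but elementary computation. Your joint-CLT route has no analogue of this stability lemma, and it is precisely that lemma, not the CLT for the Poissonized array, that is the hard content of the paper's proof. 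So the proposal is correct in outline but incomplete where the paper invests its main effort.
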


\begin{proof}
We denote the true distribution by $P_0 = \sum_{i = 1}^\infty p_i \delta_{x_i}$.
The variables $Z_{n,l}$ can be written as  $Z_{n,l}=\sum_{j=1}^\infty1_{M_{n,j}\ge l}$,
for  $M_{n,j}$  the number of observations equal to $x_j$.  As $K_n=Z_{n,1}$, the
 function $\Lambda_n'$ can be written in the form
$$\Lambda_n'(\s,M)
=\sum_{j=1}^\infty \Bigl[\frac{1_{M_{n,j}\ge 1}}\s-g_\s(M_{n,j})\Bigr]-\frac{h_{\s,M}(K_n)}{\s},$$
where $g_\s(0)=g_\s(1)=0$ and $g_\s(m)=\sum_{l=1}^{m-1}\frac {1}{l-\s}$, for $m\ge 2$.
It is shown in \cite{Karlin1967} (and repeated below) that $\E K_n/\a_0(n)\ra \Gamma(1-\s_0)$ and hence
Jensen's inequality and \eqref{EqDefhsM} give
$\E h_{\s_n,M}(K_n)\le 1+(M/\s_n)\log (1+\E K_n \s_n/M)=O(M\log n)=o(\a_0(n)^{1/2})$, 
so that the term on the far right is asymptotically negligible.

To prove the asymptotic normality of the infinite sum, after centering and scaling, we first consider the
case that the sample size $n$ is taken to be a random variable $N_n$ with the Poisson distribution with mean $n$, independent
of the original variables. The resulting variables $M_{N_n,j}$ are then Poisson distributed with means $np_j$ 
and independent across $j$, and the asymptotic normality can be proved using the Lindeberg central limit theorem.
(For reference, an appropriate infinite series version is formulated in Lemma~\ref{LemmaLindeberg}.)
As shown in the proof of (i) and (iii) of Lemma~\ref{LemConvergenceExpectations}, 
the function $E_{0,n}(\s)$ defined in \eqref{EqDefEn}
is equal to the expectation of $V_{k,n}=\sum_{j=1}^\infty\bigl[1_{M_{n,j}\ge 1}/\s-g_\s(M_{n,j})\bigr]$.
Because $(1_{M_{n,j}=0})g_\s(M_{n,j})=0$, by  Lemma~\ref{LemConvergenceExpectations} (ii), (v), (iv) and (viii), 
the variance of $V_{k,n}$ is given by 
\begin{align}
&\sum_{j=1}^\infty \var \Bigl(\frac{1_{M_{N_n,j}\ge 1}}\s-g_\s(M_{N_n,j})\Bigr)
=\sum_{j=1}^\infty \Bigl[\frac{e^{-np_j}(1-e^{-np_j})}{\s^2}\nonumber\\
&\qquad\qquad+\var g_\s(M_{N_n,j})-2\frac{e^{-np_j}}{\s}\E g_\s(M_{N_n,j})\Bigr]\sim \a_0(n)\tau_1^2,
\label{EqDeftau1}
\end{align}
where $\tau_1^2$ is equal to $(ii)/\s^2+(v)-(vi)-(2/\s)(vii)$, for $(ii), (v), (vi)$ and $(vii)$ shorthand for
the expressions on the right sides in Lemma~\ref{LemConvergenceExpectations} (ii), (v), (vi) and (vii).
Furthermore, the variables $1_{M_{N_n,j}\ge 1}$ are bounded and hence trivially satisfy the
Lindeberg condition, while the Lindeberg condition on the variables $g_\s(M_{N_n,j})$ follows from
the boundedness of $\a_0(n)^{-1}\sum_j\E g_\s(M_{N_n,j})^3$, by Lemma~\ref{LemConvergenceExpectations} (viii).
Thus the Poissonized sums are asymptotically normal, by Lemma~\ref{LemmaLindeberg}.

The proof can be completed by  applying Lemma~\ref{LemmaPoissonization}  to the variables
$V_{k,n}=\sum_{j=1}^\infty \bigl(1_{M_{k,j}\ge 1}/\s_n-g_{\s_n}(M_{k,j})\bigr)$, with $a_n=\a_0(n)^{-1/2}$ and 
$E_{0,n}$ as given. To show that $a_n(V_{k_n,n}-V_{n,n})$ tends to zero in probability, we split $V_{k,n}$ in 
$\sum_j 1_{M_{k,j}\ge1}/\s_n$ and $\sum_j g_{\s_n}(M_{k,j})$ and handle the two parts separately.
Because $a_{k_n}/a_n\ra 1$, it is not a loss of generality to assume that $k_n\ge n$. 
Because the binomial distributions $\text{binomial}(n,p_j)$
are stochastically ordered in $n$ and the functions $m\mapsto 1_{m\ge 1}$ and $m\mapsto g_\s(m)$ 
are increasing, the variables $a_n\sum_j (1_{M_{k,j}\ge1}- 1_{M_{n,j}\ge1})/\s$ and $a_n\sum_j (g_{\s_n}(M_{k,j})- g_{\s_n}(M_{n,j}))$
are nonnegative, and hence it suffices to show that their expectations tend to zero. 
This is shown in Lemma~\ref{LemmaInvariancenkn}.
\end{proof}

\begin{lemma}
\label{LemGCLambdpp}
For any $\tilde\s_n\prob \s_0\in(0,1)$ and $M_n=o\bigl(\a_0(n)/\log n\bigr)$, we have
 $\a_0(n)^{-1}\Lambda_n''(\tilde\s_n,M_n)\ra E_0'(\s_0)$,
in probability.
\end{lemma}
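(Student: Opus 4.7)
The plan is to differentiate the explicit formula for $\Lambda_n'$ used in the proof of Theorem~\ref{ThmM} and then mimic, for the second derivative, the law-of-large-numbers calculations underlying Lemma~\ref{LemAN}. Starting from
$$\Lambda_n'(\s,M)=\frac{K_n}{\s}-G_n(\s)-\frac{h_{\s,M}(K_n)}{\s},$$
with $G_n(\s)=\sum_{l=1}^{n-1}Z_{n,l+1}/(l-\s)$, differentiation in $\s$ gives
$$\Lambda_n''(\s,M)=-\frac{K_n}{\s^2}-G_n'(\s)-\frac{d}{d\s}\Bigl[\frac{h_{\s,M}(K_n)}{\s}\Bigr],$$
where $G_n'(\s)=\sum_{l=1}^{n-1}Z_{n,l+1}/(l-\s)^2$. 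The bound in \eqref{EqDefhsM} together with the analogous bound on the $\s$-derivative of $h_{\s,M}$ shows that the third term is $O_P(M_n\log n)=o_P(\a_0(n))$ uniformly for $\s$ in a compact subinterval of $(0,1)$. Hence it suffices to show
$$\a_0(n)^{-1}\Bigl[-K_n/\tilde\s_n^2-G_n'(\tilde\s_n)\Bigr]\prob E_0'(\s_0).$$

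To deal with the randomness of $\tilde\s_n$, note that $\s\mapsto G_n'(\s)$ is increasing on $(0,1)$ and $\s\mapsto K_n/\s^2$ is decreasing. Hence on the high-probability event $\{|\tilde\s_n-\s_0|\le\eps\}$, the quantity $-K_n/\tilde\s_n^2-G_n'(\tilde\s_n)$ is sandwiched between its values at the deterministic points $\s_0\pm\eps$. By continuity of $E_0'$ at $\s_0$, the assertion thus reduces to proving convergence in probability of $\a_0(n)^{-1}[-K_n/\s^2-G_n'(\s)]$ to $E_0'(\s)$ at each fixed $\s$ near $\s_0$.

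For the first summand, the classical fact $K_n/\a_0(n)\ra \Gamma(1-\s_0)$ almost surely (from \cite{Karlin1967}) gives the contribution $-\Gamma(1-\s_0)/\s^2$. For the second, write $G_n'(\s)=\sum_{j}\tilde g_\s(M_{n,j})$ with $\tilde g_\s(m):=\sum_{l=1}^{m-1}(l-\s)^{-2}$, a \emph{bounded} increasing function of $m$ (since $\sum_{l=1}^\infty (l-\s)^{-2}<\infty$ for $\s\in(0,1)$). Computing the expectation under Poissonization and applying Lemma~\ref{LemConvergenceExpectations} together with the depoissonization Lemma~\ref{LemmaPoissonization} and the comparison Lemma~\ref{LemmaInvariancenkn}, exactly as in the proof of Lemma~\ref{LemAN}, yields
$$\a_0(n)^{-1}\E\,G_n'(\s)\ra \sum_{m=1}^\infty \frac{\Gamma(m+1-\s_0)}{m!\,(m-\s)^2}.$$
Combined with $\E K_n/(\a_0(n)\s^2)\ra \Gamma(1-\s_0)/\s^2$, this reproduces precisely the series expansion of $-E_0'(\s)$ obtained by term-by-term differentiation of \eqref{EqDefE}.

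The final ingredient is a variance bound. Under the Poissonized model the summands $\tilde g_\s(M_{N_n,j})$ are independent; boundedness of $\tilde g_\s$ (in contrast to the logarithmically growing $g_\s$ of Lemma~\ref{LemAN}) makes the variance estimate strictly easier, and a direct bound shows $\var\, G_{N_n}'(\s)=O(\a_0(n))=o(\a_0(n)^2)$. Depoissonization then transfers this to $G_n'(\s)$, giving the required convergence in probability. The main obstacle is only bookkeeping: one must verify that the Poissonization/depoissonization machinery developed for Lemma~\ref{LemAN} applies with the kernel $\tilde g_\s$ in place of $g_\s$. Because $\tilde g_\s$ is bounded and monotone, no new analytic estimates are needed and the cited lemmas apply verbatim.
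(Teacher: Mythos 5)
Your decomposition is the same as the paper's: isolate the main terms $-K_n/\s^2$ and $-\sum_{l}Z_{n,l+1}/(l-\s)^2$, and show the $M$-dependent remainder is $O_P(M_n\log K_n)=o_P(\a_0(n))$. Where you diverge is in how you prove the law of large numbers for $G_n'(\s)$: the paper simply invokes Karlin's result that $Z_{n,l}/\a_0(n)\ra\Gamma(l-\s_0)/(l-1)!$ in probability and in mean for each fixed $l$, and sums over $l$ (the tail over $l$ being controlled by $Z_{n,l+1}\le K_n$ and $\sum_{l>L}(l-\s)^{-2}\lesssim 1/L$), whereas you rewrite $G_n'(\s)=\sum_j\tilde g_\s(M_{n,j})$ with $\tilde g_\s=\partial g_\s/\partial\s$ and rerun the Poissonization/Lindeberg machinery of Lemma~\ref{LemAN}, using part (iv) of Lemma~\ref{LemConvergenceExpectations} for the mean and boundedness of $\tilde g_\s$ for the variance. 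Both routes work; yours is more self-contained and, as you note, genuinely easier than Lemma~\ref{LemAN} because only an $o_P(\a_0(n))$ fluctuation bound is needed rather than a CLT at scale $\a_0(n)^{1/2}$.

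Two small imprecisions. First, your sandwich argument for the random argument $\tilde\s_n$ is stated for the wrong quantity: $-K_n/\s^2$ is \emph{increasing} in $\s$ while $-G_n'(\s)$ is \emph{decreasing}, so their sum is not monotone and is not sandwiched between its own values at $\s_0\pm\eps$. You must sandwich the two monotone pieces separately (each between its values at $\s_0-\eps$ and $\s_0+\eps$) and then use continuity of the two limit functions at $\s_0$; this is a one-line fix. Second, the claim that Lemmas~\ref{LemmaPoissonization} and~\ref{LemmaInvariancenkn} ``apply verbatim'' is not literally correct: Lemma~\ref{LemmaInvariancenkn} is proved for the specific kernels $1_{m\ge1}$ and $g_{\s_n}$, not for $\tilde g_\s$. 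What saves you is that for depoissonization of an in-probability limit you only need $\a_0(n)^{-1}\bigl(\sum_j\tilde g_\s(M_{k_n,j})-\sum_j\tilde g_\s(M_{n,j})\bigr)\prob0$ for $k_n-n=O(\sqrt n)$; since $\tilde g_\s$ is bounded and increasing, the difference is nonnegative under the natural coupling and its expectation is $o(\a_0(n))$ (e.g.\ because the two normalized expectations converge to the same limit, $\a_0(k_n)/\a_0(n)\ra1$ by regular variation). State that estimate explicitly rather than deferring to a lemma that does not cover it.
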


\begin{proof}
The second derivative is given by
\begin{align*}\Lambda_n''(\s)&=-\sum_{l=1}^{K_n-1}\frac{l^2}{(M+l\s)^2}-\sum_{l=1}^{n-1}\frac1{(l-\s)^2}Z_{n,l+1}\\
&=-\frac{K_n-1}{\s^2}+\frac 1{\s^2}\sum_{l=1}^{K_n-1}\Bigl[\frac{2M}{M+\s l}-\frac{M^2}{(M+\s l)^2}\Bigr]
-\sum_{l=1}^{n-1}\frac1{(l-\s)^2}Z_{n,l+1}.
\end{align*}
It is shown in \cite{Karlin1967} (see his formula (66), or see the proof of Lemma~\ref{LemAN}), that
$K_n/\a_0(n)\ra \Gamma(1-\s_0)$ and $Z_{n,l}/\a_0(n)\ra \Gamma(l-\s_0)/(l-1)!$, for every $l\ge 1$, in probability and in mean.
We use this to infer the convergence of the first and last terms on the right divided by $\a_0(n)$. That the limit is equal
to $E_0'(\s_0)$ follows by inspection of its form and Lemma~\ref{LemmaZero}. The second 
term is bounded above in absolute value  by a multiple of $M\log K_n$ and divided  by $\a_0(n)$ tends to zero.
\end{proof}

\subsection{Technical lemmas}
In the next lemmas $(p_j)_{j=1}^\infty$ is a given infinite probability vector and
$\a$ is the cumulative distribution function of the counting measure on the points $1/p_j$, for $j\in \NN_+$.
Furthermore, the function $g_\s: \NN\to\NN$ is given by $g_\s(1)=g_\s(2)=0$ and
$$g_\s(m)=\sum_{l=1}^{m-1}\frac {1}{l-\s}, \qquad m\ge 2.$$

\begin{lemma}
\label{LemConvergenceExpectations}
Suppose that $\a(u):=\#\{ j: 1/p_j\le u\}$ is regularly varying at $\infty$ of order $\g\in (0,1)$.
Then, for any $\s_n\ra\s\in (0,1)$, and independent $M_{n,j}\sim\text{Poisson}(np_j)$,
\begin{itemize}
\item[(i)] $\frac{1}{\a(n)}\sum_{j=1}^\infty\E 1_{M_{n,j}\ge 1}\ra\Gamma(1-\g)$,
\item[(ii)] $\frac{1}{\a(n)}\sum_{j=1}^\infty\var 1_{M_{n,j}\ge 1}\ra(2^{\g}-1)\Gamma(1-\g)$,
\item[(iii)] $\frac{1}{\a(n)}\sum_{j=1}^\infty\E g_{\s_n}(M_{n,j})\ra \sum_{m=1}^\infty \frac{\Gamma(m+1-\g)}{m!(m-\s)}$,
\item[(iv)] $\frac{1}{\a(n)}\sum_{j=1}^\infty\E \frac{\partial g_{\s_n}}{\partial \s}(M_{n,j})
\ra \sum_{m=1}^\infty \frac{\Gamma(m+1-\g)}{m!(m-\s)^2}$,
\item[(v)] $\frac{1}{\a(n)}\sum_{j=1}^\infty\E g_{\s_n}^2(M_{n,j})
\ra\sum_{m=1}^\infty \frac{\Gamma(m+1-\g)(g_\s(m+1)+g_\s(m))}{m!(m-\s) }$,
\item[(vi)] $\frac{1}{\a(n)}\sum_{j=1}^\infty\bigl(\E g_{\s_n}(M_{n,j})\bigr)^2
\ra \sum_{k=2}^\infty\sum_{m=1}^\infty\frac{g_\s(k)\Gamma(k+m+1-\g)}{k!m!2^{k+m-\g}(m-\s)}$.
\item[(vii)] $\frac{1}{\a(n)}\sum_{j=1}^\infty e^{-np_j} \E g_{\s_n}(M_{n,j})\ra \sum_{m=2}^\infty \frac{g_\s(m)\g \Gamma(m-\g)}{m!\,2^{m-\g}}$.
\item[(viii)] $\frac{1}{\a(n)}\sum_{j=1}^\infty\E g_{\s_n}^3(M_{n,j})
\ra\sum_{m=1}^\infty \frac{\Gamma(m+1-\g)(g_\s^2(m+1)+g_\s(m+1)g_\s(m)+g_\s^2(m))}{m!(m-\s) }$,
\end{itemize}
All limits on the right sides are finite.
\end{lemma}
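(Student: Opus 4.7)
The plan is to reduce all eight limits to a single Karlin-type Poisson moment formula coming from the regular variation of $\a$: for $r\ge 1$ and $c>0$,
$$\frac{1}{\a(n)}\sum_{j=1}^\infty (np_j)^r e^{-cnp_j}\to \g c^{\g-r}\Gamma(r-\g),$$
together with the companion $\a(n)^{-1}\sum_j (1-e^{-cnp_j})\to c^\g\Gamma(1-\g)$. Both are obtained by writing the left sum as $\int_0^\infty h(n/u)\,d\a(u)$, substituting $s=n/u$, integrating by parts to get $\int_0^\infty \a(n/s)h'(s)\,ds$, dividing by $\a(n)$, and invoking dominated convergence based on $\a(n/s)/\a(n)\to s^{-\g}$ together with Potter's bound on the slowly varying factor of $\a$. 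Equivalently, this is Karlin's (1967) statement $\E Z_{n,l}/\a(n)\to \Gamma(l-\g)/(l-1)!$ for $l\ge 1$, which I will use freely.

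With this in hand, items (i) and (ii) follow immediately: (i) is the $c=1$ case of the companion, and (ii) uses the decomposition $e^{-np_j}(1-e^{-np_j})=(1-e^{-2np_j})-(1-e^{-np_j})$ together with (i) applied at $n$ and $2n$, recalling $\a(2n)/\a(n)\to 2^\g$. Items (iii)--(v) and (viii) all rest on the pointwise representations
$$g_\s(M)=\sum_{l=1}^\infty \frac{1_{M\ge l+1}}{l-\s},\qquad \partial_\s g_\s(M)=\sum_{l=1}^\infty \frac{1_{M\ge l+1}}{(l-\s)^2}.$$
Taking expectations, summing over $j$, interchanging sums, and applying $\E Z_{n,l+1}/\a(n)\to \Gamma(l+1-\g)/l!$ term by term yields (iii) and (iv). For (v) and (viii), square and cube the indicator representation, e.g.
$$g_\s(M)^2=\sum_{l\ge 1}\frac{1_{M\ge l+1}}{(l-\s)^2}+2\sum_{k\ge 2}\frac{g_\s(k)\,1_{M\ge k+1}}{k-\s},$$
and apply the same termwise Karlin step; the stated form in (v) is then recovered from the algebraic identity $g_\s(m+1)+g_\s(m)=2g_\s(m)+1/(m-\s)$.

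Items (vi) and (vii) require the Poisson moment asymptotic at the doubled rate $c=2$, because squaring $\E g_\s(M_{n,j})$ or multiplying by $e^{-np_j}$ produces the factor $e^{-2np_j}$. Expanding
$$\bigl(\E g_{\s_n}(M_{n,j})\bigr)^2=\sum_{m_1,m_2\ge 2}\frac{g_{\s_n}(m_1)g_{\s_n}(m_2)(np_j)^{m_1+m_2}e^{-2np_j}}{m_1!\,m_2!},$$
summing in $j$, and applying the $c=2$ version of the Karlin asymptotic gives the double series $\g 2^\g\sum_{m_1,m_2\ge 2}g_\s(m_1)g_\s(m_2)\Gamma(m_1+m_2-\g)/(m_1!m_2!\,2^{m_1+m_2})$; the stated form is obtained by writing one $g_\s$ factor as $\sum_l (l-\s)^{-1}$ and reindexing. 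Item (vii) is similar but with only one $g_\s$ factor present. Passing from $\s_n$ to $\s$ inside each limiting series uses continuity on $(0,1)$ and locally uniform bounds in $\s$.

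The main obstacle is the rigorous interchange of the infinite sums (in $l$, or in $m_1,m_2,k$) with the limit $n\to\infty$. This requires a uniform-in-$n$ dominating envelope such as $\E Z_{n,l+1}/\a(n)\le C\,\Gamma(l+1-\g)/l!$, which follows from Potter's bound applied to the slowly varying factor of $\a$. Once dominated convergence is justified, finiteness of the limiting series is routine: $\Gamma(m+1-\g)/m!=O(m^{-\g})$ and $g_\s(m)=O(\log m)$ make the single series in (iii)--(v) and (viii) convergent since $\g>0$, while the geometric factor $2^{-(m_1+m_2)}$ (or $2^{-m}$) supplies extra decay in (vi) and (vii).
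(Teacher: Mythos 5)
Your overall strategy is sound and close in spirit to the paper's own: both rest on the Fubini identity $\sum_j h(np_j)=\int_0^n \a(n/s)(-h'(s))\,ds$ (in various guises), regular variation of $\a$, Potter's theorem for domination, and dominated convergence. The organizational difference is that the paper collapses the sum over $m$ into a generating function $\sum_m s^m/(m!(m-\s))$ first and applies DCT once to a single $s$-integral, whereas you expand $g_\s(M)=\sum_l 1_{M\ge l+1}/(l-\s)$, identify $\sum_j\E g_\s(M_{n,j})=\sum_l\E Z_{n,l+1}/(l-\s)$, and apply Karlin's termwise limit $\E Z_{n,l+1}/\a(n)\ra \Gamma(l+1-\g)/l!$ together with a dominating envelope. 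Both routes are legitimate and the Abel-summation identities you invoke to pass from the indicator expansion to the stated forms in (iii)--(v) and (viii) check out, as does the $c=2$ Poisson moment asymptotic for (vii). Two caveats, though.

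First, the envelope $\E Z_{n,l+1}/\a(n)\le C\,\Gamma(l+1-\g)/l!$ is not quite what Potter gives; what one actually extracts is $C\bigl(\Gamma(l+1-\g-\d)+\Gamma(l+1-\g+\d)\bigr)/l!$ for some small $\d>0$, and the region $s\in(n/M,n)$ where Potter's bound is unavailable needs a separate (but easy) estimate. This does not threaten the conclusion, since the modified envelope is still summable against $1/(l-\s)$ for $\d<\g$, but it is exactly the bookkeeping the paper's integral version makes explicit.

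Second, and more concretely, the final step of (vi) has a genuine gap. Your termwise Poisson moment limit correctly yields $\g 2^\g\sum_{m_1,m_2\ge 2} g_\s(m_1)g_\s(m_2)\Gamma(m_1+m_2-\g)/(m_1!\,m_2!\,2^{m_1+m_2})$, but the stated form is not reached by ``writing one $g_\s$ factor as $\sum_l(l-\s)^{-1}$ and reindexing'': that would require the inner-sum identity $\g\sum_{m_2\ge l+1}\Gamma(k+m_2-\g)/(m_2!2^{m_2})=\Gamma(k+l+1-\g)/(l!\,2^l)$, which is false (already fails numerically at $\g=\tfrac12$, $k=2$, $l=1$). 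The two expressions \emph{are} equal, but one needs to exploit the $m_1\leftrightarrow m_2$ symmetry of the double sum: writing $1/(m-\s)=g_\s(m+1)-g_\s(m)$ and Abel-summing in $m$ produces a factor $(m-k+\g)$, whose antisymmetric part $(m-k)$ integrates out against the symmetric weights, leaving $\g$. This is precisely the partial-integration step $\frac{d}{ds}[F(s)^2e^{-s}]$ used in the paper's proof of (vi); some such argument is needed, not a simple reindex.
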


\begin{proof}
Assertions (i) and (ii) were stated in \cite{Karlin1967}; we include proofs for completeness.

The series in the left side of (i) is
$$\sum_{j=1}^\infty \Pr(M_{n,j}\ge 1)=\sum_{j=1}^\infty (1-e^{-np_j})=\int_1^\infty\!(1-e^{-n/u})\,d\a(u)
=\int_0^n\!\a\Bigl(\frac ns\Bigr)e^{-s}\,ds,$$
by Fubini's theorem (or partial integration), since $1-e^{-n/u}=\int_0^{n/u} e^{-s}\,ds$. By the definition of
regular variation $\a(n/s)/\a(n)\ra s^{-\g}$, for every $s$, as $n\ra\infty$. By Potter's theorem (\cite{Binghametal}, Theorem 1.5.6),
for every $\d>0$ there exists $M>1$ such that $\a(n/s)/\a(n)\le s^{-\g-\d}\vee s^{-\g+\d}$, for every $s<n/M$. We can choose $\d$ so that 
$\g+\d<1$, and then $\int_0^\infty (s^{-\g-\d}\vee s^{-\g+\d})e^{-s}\,ds<\infty$. For the corresponding $M$, we then have $\int_0^{n/M}\a(n/s)/\a(n)\,e^{-s}\,ds
\ra\int_0^\infty s^{-\g}e^{-s}\,ds=\Gamma(1-\g)$, by the dominated convergence theorem. For $s\ge n/M$, we have $\a(n/s)\le \a(M)$
and hence $\int_{n/M}^n\a(n/s) e^{-s}\,ds\le \a(M)e^{-n/M}=o(\a(n))$, as $n\ra\infty$. 

The series in (ii) is $\sum_{j=1}^\infty e^{-np_j}(1-e^{-np_j})=\sum_{j=1}^\infty (1-e^{-2np_j})-\sum_{j=1}^\infty (1-e^{-np_j})$.
By the first paragraph this is asymptotic to $\bigl(\a(2n)-\a(n)\bigr)\Gamma(1-\g)\sim (2^\g-1)\a(n)\Gamma(1-\g)$, by regular
variation of $\a$.

For (iii) we write 
$$\sum_{j=1}^\infty\E g_{\s}(M_{n,j})=\!\sum_{j=1}^\infty\sum_{m=2}^\infty g_\s(m)\frac{e^{-np_j}(np_j)^m}{m!}\!
=\sum_{m=2}^\infty\frac{g_\s(m)}{m!}\!\int_1^\infty\!\! e^{-n/u}\Bigl(\frac nu\Bigr)^m\,d\a(u).$$
Substituting  $e^{-n/u}(n/u)^m=\int_0^{n/u} e^{-s}s^{m-1}(m-s)\,ds$ (valid for $m>0$) and using Fubini's theorem, we can rewrite the
right side as
\begin{align*}
&\sum_{m=2}^\infty\frac{g_\s(m)}{m!}\int_0^n \a\Bigl(\frac n s\Bigr) e^{-s} s^{m-1}(m-s)\,ds\\
&=g_\s (2)\int_0^n\a\Bigl(\frac n s\Bigr) e^{-s} s\,ds+\sum_{m=2}^\infty\frac{g_\s(m+1)-g_\s(m)}{m!}\int_0^n \a\Bigl(\frac n s\Bigr) e^{-s} s^{m}\,ds\\
&=\int_0^n\a\Bigl(\frac n s\Bigr) e^{-s}\Bigl(\sum_{m=1}^\infty\frac{s^m}{m!(m-\s)}\Bigr) \,ds.
\end{align*}
As before, regular variation and Potter's theorem give for $s\le n/M$ the bound $\a(n/s)/\a(n)\lesssim s^{-\g-\d}\vee s^{-\g+\d}$, and then 
$$\frac{\a(n/s)}{\a(n)} \Bigl(\sum_{m=1}^\infty\frac{s^m}{m!(m-\s)}\Bigr) \lesssim (s^{-\g-\d}\vee 1) (e^s-1-s)\frac 1s,\qquad s\le n/M.$$
Furthermore, the left side tends pointwise to $s^{-\g}\sum_{m=1}^\infty s^m/(m!(m-\s))$. By the dominated convergence theorem, 
\begin{align*}
&\int_0^{n/M}\frac{\a(n/ s)}{\a(n)} e^{-s}\Bigl(\sum_{m=1}^\infty\frac{s^m}{m!(m-\s)}\Bigr) \,ds
\ra \int_0^\infty s^{-\g}e^{-s}\sum_{m=1}^\infty \frac{s^m}{m!(m-\s)}\,ds.
\end{align*}
The right side is the limit as given.
Since $\sum_j p_j=1$, we have that $\a(u)=\# (p_j\ge 1/u)\le u$. Therefore 
$$\int_{n/M}^n\a\Bigl(\frac n s\Bigr) e^{-s}\Bigl(\sum_{m=1}^\infty\frac{s^m}{m!(m-\s)}\Bigr) \,ds
\le \int_{n/M}^\infty \frac n s \frac 1 s\,ds\le M.$$
This is of lower order than $\a(n)$ and hence the proof of the third assertion is complete.

For (iv) we follow the same approach as in (iii), replacing $g_\s$ by $\dot g_\s=\partial/\partial\s g_s$,
and then at the end substitute $\dot g_s(m+1)-\dot g_\s(m)=1/(m-\s)^2$.

For (v) again we follow the same approach as under (iii), now replacing $g_\s$ by $g_\s^2$. 
At the end we write the difference $g_\s^2(m+1)-g_\s^2(m)$ as 
$(m-\s)^{-1}\bigl(g_\s(m+1)+g_\s(m)\bigr)$ and complete the argument as before, where we
can bound $g_\s(m+1)+g_\s(m)$ by a multiple of $\log m$, for large $m$, and use that
$\sum_m s^m\log m/m!\lesssim e^s(s^\d\vee 1)$, for every $s$, by Lemma~\ref{LemmaMisc},
with a sufficiently small $\d>0$.

The series in (vi) is equal to
\begin{align*}
&\sum_{j=1}^\infty\sum_{k=2}^\infty\sum_{m=2}^\infty g_\s(k)g_\s(m)\frac{e^{-2np_j}(np_j)^{k+m}}{k!m!}\\
&\qquad\quad=\sum_{k=2}^\infty\sum_{m=2}^\infty\frac{g_\s(k)g_\s(m)}{k!\,m!}\!\int_1^\infty\!\! e^{-2n/u}\Bigl(\frac nu\Bigr)^{k+m}\,d\a(u).
\end{align*}
Substituting  $e^{-2n/u}(2n/u)^{k+m}=\int_0^{2n/u} e^{-s}s^{k+m-1}(k+m-s)\,ds$ and using Fubini's theorem, we can rewrite the
right side as
\begin{align*}
&\sum_{k=2}^\infty\sum_{m=2}^\infty\frac{g_\s(k)g_\s(m)}{k!\,m!\,2^{k+m}}\int_0^{2n} \a\Bigl(\frac{2n} s\Bigr) e^{-s} s^{k+m-1}(k+m-s)\,ds\\
&\quad=\int_0^{2n} \a\Bigl(\frac {2n} s\Bigr)\,\frac{d}{ds} \Bigl[\Bigl(\sum_{k=2}^\infty\frac{g_\s(k)s^{k}}{k!\,2^{k}}\Bigr)^2e^{-s}\Bigr]\,ds\\
&\quad=\int_0^{2n} \a\Bigl(\frac {2n} s\Bigr)\,\Bigl(\sum_{k=2}^\infty\frac{g_\s(k)s^{k}}{k!\,2^{k}}\Bigr)e^{-s}
\Bigl[\sum_{k=1}^\infty\frac{\bigl(g_\s(k+1)-g_\s(k)\bigr)s^k}{k!\,2^{k}}\Bigr]\,ds.
\end{align*}
In view of Lemma~\ref{LemmaMisc} and because $g_\s(k+1)-g_\s(k)=1/(k-\s)$, the integrand is bounded above by a multiple of 
$\a(2n/s) (e^{s/2}-1)e^{-s}s^{-1}(e^{s/2}-1)$. Using the dominated convergence theorem and
arguments as before, we see that the right side divided by $\a(n)$ is asymptotic to the right side of (vi).

The extra factor $e^{-np_j}$ in (vii) relative to (iii) leads to the same expression as in (iii), except
that $e^{-n/u}$ must be replaced by $e^{-2n/u}$. Following the same argument, we find that the series in (vii)
is equal to 
$$\sum_{m=2}^\infty \frac{g_\s(m)}{m!\, 2^m}\int_0^{2n}\a\Bigl(\frac {2n }s\Bigr)e^{-s}s^{m-1}(m-s)\,ds.$$
The functions $\sum_{m=2}^\infty \frac{g_\s(m)}{m! 2^m}e^{-s}s^{m-1}|m-s|$ are uniformly integrable
(thanks to the factors $2^m$ that are extra relative to (iii)) . Therefore, by arguments as before 
the display is asymptotically equivalent to the expression obtained by replacing $\a(2n/s)$ by $\a(n)(2/s)^{\g}$.
Finally we can use that  $m\Gamma(m-\g)-\Gamma(m+1-\g)=\g\Gamma(m-\g)$.

For (viii) we follow the same approach as under (iii), replacing $g_\s$ by $g_\s^3$, where at
the end we write the difference $g_\s^3(m+1)-g_\s^3(m)$ as $(m-\s)^{-1}(g_\s^2(m+1)+g_\s(m+1)g_\s(m)+g_\s^2(m))$.

The finiteness of the limitss can be proved  with the help of Lemma~\ref{LemmaStirling} by comparison with standard series.
\end{proof}

For $\a(u):=\#\{ j: 1/p_j\le u\}$ as in the preceding, define a function $E_n$ by 
\begin{equation}
E_n(\s)= \int_0^n\a\Bigl(\frac ns\Bigr)e^{-s}\Bigl(\frac 1\s-\sum_{m=1}^\infty \frac{s^m}{m!(m-\s)}\Bigr)\,ds.
\label{EqDefEnTwo}
\end{equation}

\begin{lemma}
\label{LemmaZero}
If the function $\a$ is regularly varying at $\infty$ of order $\g\in (0,1)$, then
the functions $E_n$ in \eqref{EqDefEnTwo} satisfy, as $n\ra\infty$, for $\s_n\ra\s\in(0,1)$,
\begin{align}\frac{E_n(\s_n)}{\a(n)}&\ra  E(\s):=\frac{\Gamma(1-\g)}\s-\sum_{m=1}^\infty\frac{\Gamma(m+1-\g)}{m!(m-\s)},
\label{EqDefETwo}\\
\frac{E_n'(\s_n)}{\a(n)}&\ra E'(\s).\nonumber
\end{align}
The limit function $E$ vanishes at $\s=\g$ and the zeros $\s_{0,n}$ of $E_n$ satisfy $\s_{0,n}\ra\g$.
Furthermore, if there exists a continuously differentiable function $L: [1,\infty)\to \RR$ such that
$|\a(u)-u^\g L(u)|\le C u^\b$, for every $u>1$, and some $C>0$ and $\b<\g$, and
such that $s\mapsto L'(s)s$ is slowly varying at $\infty$, then, as $n\ra\infty$,
$$\s_{0,n}-\g=O\Bigl(\frac{n^{\b-\g}}{L(n)}\Bigr)-\frac{\Gamma(1-\g)(1+\g)}{\g^2E '(\g)}\Bigl(\frac{L'(n)n}{L(n)}\Bigr)\bigl(1+o(1)\bigr).$$ 
In particular, if $L$ can be taken constant, then $\s_{0,n}-\g=O(n^{\b-\g})$.
\end{lemma}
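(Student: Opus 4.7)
The plan is to dispatch the statements in the order they appear: the two pointwise limits, the identity $E(\g)=0$ together with the convergence $\s_{0,n}\to\g$, and finally the refined rate.

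For $E_n(\s_n)/\a(n)\to E(\s)$ and $E_n'(\s_n)/\a(n)\to E'(\s)$ the argument is the one used in the proof of Lemma~\ref{LemConvergenceExpectations}. Potter's theorem bounds $\a(n/s)/\a(n)$ by $s^{-\g-\d}\vee s^{-\g+\d}$ on $(0,n/M]$, and this dominating function is integrable against $e^{-s}\bigl(1/\s+|g_\s(s)|\bigr)$ because the large-$s$ growth $g_\s(s)\sim e^s/s$ cancels $e^{-s}$ and leaves $1/s$. On $(n/M,n]$ the crude bound $\a(u)\le u$ combined with $e^{-s}$ makes the contribution $o(\a(n))$. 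Combined with the pointwise limit $\a(n/s)/\a(n)\to s^{-\g}$, dominated convergence delivers the first limit, and continuity of the integrand in $\s$ uniformly on compact subintervals absorbs $\s_n\to\s$. Differentiating under the integral and repeating (the derivative $-1/\s^2-\sum_m s^m/(m!(m-\s)^2)$ is controlled by the same envelopes) gives the derivative limit.

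To see $E(\g)=0$, use $\Gamma(m+1-\g)/(m-\g)=\Gamma(m-\g)$ to reduce the sum in $E(\g)$ to $\sum_{m\ge1}\Gamma(m-\g)/m!$; inserting $\Gamma(m-\g)=\int_0^\infty t^{m-\g-1}e^{-t}\,dt$ and interchanging by monotone convergence yields $\int_0^\infty t^{-\g-1}(1-e^{-t})\,dt$, which equals $\Gamma(1-\g)/\g$ after one integration by parts, cancelling the first term of $E(\g)$. The integrand of $\partial_\s E_n$ is strictly negative on $(0,1)$, so both $E_n$ and $E$ are strictly decreasing, each has a unique zero, and the pointwise convergence together with monotonicity force $\s_{0,n}\to\g$.

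For the rate, the mean-value identity $0=E_n(\s_{0,n})=E_n(\g)+E_n'(\tilde\s_n)(\s_{0,n}-\g)$ gives $\s_{0,n}-\g\sim-E_n(\g)/[\a(n)E'(\g)]$, reducing matters to estimating $E_n(\g)/\a(n)$. Write $\a(n/s)=(n/s)^\g L(n/s)+r(n/s)$ with $|r(u)|\le Cu^\b$: the $r$-contribution to $E_n(\g)$ is at most $n^\b\int_0^n s^{-\b}e^{-s}|f_\g(s)|\,ds=O(n^\b\vee\log n)$ (using $e^{-s}|f_\g(s)|\le C/(1+s)$), producing the $O(n^{\b-\g}/L(n))$ term. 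When $L$ is constant the remaining part of $E_n(\g)$ is just the truncation from $n$ to $\infty$ of the vanishing integral $\int_0^\infty s^{-\g}e^{-s}f_\g(s)\,ds=0$, which is $O(n^{-\g})$ and absorbed in the error, proving the last display.

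In general, split $L(n/s)/L(n)=1+\{L(n/s)/L(n)-1\}$ inside the main integral $n^\g L(n)\int_0^n s^{-\g}[L(n/s)/L(n)]e^{-s}f_\g(s)\,ds$. The ``$1$'' piece contributes only $O(n^{-\g})$ from truncation. For the correction, $L(n/s)-L(n)=-\int_1^s M(n/u)/u\,du$ with $M(u):=L'(u)u$ slowly varying; Potter bounds on $M$ (supplying an integrable dominating function) and dominated convergence yield
\[
\int_0^n s^{-\g}\{L(n/s)/L(n)-1\}e^{-s}f_\g(s)\,ds\;\sim\;-\frac{L'(n)n}{L(n)}\int_0^\infty s^{-\g}(\log s)e^{-s}f_\g(s)\,ds.
\]
The remaining integral is identified with the stated coefficient by invoking $\int_0^\infty s^{a-1}(\log s)e^{-s}\,ds=\Gamma'(a)$ together with $\Gamma'(m+1-\g)=\Gamma(m-\g)+(m-\g)\Gamma'(m-\g)$ and the $\g$-derivative of the identity $\sum_m\Gamma(m-\g)/m!=\Gamma(1-\g)/\g$ proved in the previous paragraph. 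The hardest step is this last bookkeeping: the $O(n^{-\g})$ truncation contributions from the ``$1$'' piece and from the large-$s$ part of the ``$L(n/s)/L(n)-1$'' piece must be shown to fit inside the $O(n^{\b-\g}/L(n))$ error under the slow variation of $M$, which crucially uses that $L'(n)n/L(n)$ decays more slowly than any power of $n$.
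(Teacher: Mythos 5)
Your plan tracks the paper's proof for everything up to the leading term: the two limits via Potter's theorem plus dominated convergence, the evaluation $E(\g)=0$, monotonicity for $\s_{0,n}\ra\g$, the mean-value reduction to $E_n(\g)/\a(n)$, the $O(n^\b)$ cost of replacing $\a(n/s)$ by $(n/s)^\g L(n/s)$, and the $O(n^{-\g})$ truncation to $(0,\infty)$. For the leading term you take a genuinely different route: the paper integrates by parts twice so that the whole integral lands on $L'$, whereas you expand $L(n/s)/L(n)-1\sim -(L'(n)n/L(n))\log s$ from the slow variation of $s\mapsto L'(s)s$ and exploit $\int_0^\infty s^{-\g}e^{-s}f_\g(s)\,ds=0$. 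That device is sound and arguably cleaner.

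The gap is the final identification, which you assert rather than carry out — and it is false. Your method reduces the leading constant to $-I$ with $I:=\int_0^\infty s^{-\g}(\log s)\,e^{-s}f_\g(s)\,ds$, and you claim $-I=\Gamma(1-\g)(1+\g)/\g^2$. Write $E_\g(\s)=\Gamma(1-\g)/\s-\sum_m\Gamma(m+1-\g)/(m!(m-\s))$ and differentiate the identity $E_\g(\g)\equiv 0$ in $\g$: this gives $\partial_\g E_\g(\s)\big|_{\s=\g}+E'(\g)=0$, and since $\partial_\g E_\g(\s)|_{\s=\g}=-I$, one gets $I=E'(\g)$ exactly. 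Hence your expansion yields $E_n(\g)/\a(n)\sim -(L'(n)n/L(n))E'(\g)$ and therefore $\s_{0,n}-\g\sim L'(n)n/L(n)$, i.e.\ coefficient $1$, not $-\Gamma(1-\g)(1+\g)/(\g^2E'(\g))$. These are different numbers: one computes $-E'(\g)=\Gamma(1-\g)/\g^2+\sum_m\Gamma(m-\g)/(m!(m-\g))=\Gamma(1-\g)^2\Gamma(1+\g)/\g^2$, which at $\g=\tfrac12$ equals $2\pi^{3/2}\approx 11.14$, whereas $\Gamma(1-\g)(1+\g)/\g^2\approx 10.63$ there. (The source of the mismatch with the printed constant is that the closed-form evaluation behind the statement treats $\sum_m\Gamma(m-\g)/(m!(m-\g))$ as if it were $\sum_m\Gamma(m-\g)/m!=\Gamma(1-\g)/\g$, dropping the factor $1/(m-\g)$; your route, done correctly, does not reproduce that value.) So you cannot wave at ``bookkeeping'' here: you must actually evaluate $I$, and doing so shows your argument proves the displayed rate with the coefficient of $L'(n)n/L(n)$ equal to $1$ rather than the constant appearing in the lemma. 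You should either exhibit the computation and reconcile it with the statement, or flag explicitly that your (verifiable) value of the integral contradicts the stated constant.
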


\begin{proof}
As shown in the proof of Lemma~\ref{LemConvergenceExpectations} (i) and (iii), the function
$E_n(\s)$ is equal to $\sum_{j=1}^\infty\bigl( \E 1_{M_{n,j}\ge 1}/\s -\E g_\s(M_{n,j})\bigr)$, with the notation
as in the lemma. Therefore, by the lemma the limit function $E$ is equal to the limit in (i)  divided by $\s$ minus the limit in (iii),
i.e.\ the right side of \eqref{EqDefETwo}.
The limit  in (iii) at $\s=\g$ can be written
\begin{align*}
    \sum_{m=1}^\infty \frac{\Gamma(m-\g)}{m!}=\sum_{m=1}^\infty \int_0^\infty\frac{s^{m-\g-1}}{m!}e^{-s}\,ds
=\int_0^\infty(1-e^{-s})s^{-\g-1}\,ds.
\end{align*}
By partial integration, this can be further rewritten as $\int_0^\infty x^{-\g}/\g \,e^{-x}\,dx=\Gamma(1-\g)/\g$.
Thus $E(\g)=\Gamma(1-\g)/\g- \Gamma(1-\g)/\g=0$.

The limit  of $E_n'(\s_n)/\a(n)$ is obtained similarly from (i) and (iv) of 
Lemma~\ref{LemConvergenceExpectations}, and it is seen to be equal to the derivative $E'(\s)$.

The functions $E_n$ and $E$ are monotonely decreasing and $E_n(\g-\e)/\a(n)\ra E(\g-\e)>0$ and
$E_n(\g+\e)/\a(n)\ra E(\g+\e)<0$, for every $\e>0$, by \eqref{EqDefETwo}. This shows that $\s_{0,n}\in (\g-\e,\g+\e)$ eventually
and hence $\s_{0,n}\ra\g$.

By the mean value theorem, $E_n(\s_{0,n})-E_n(\g)=E_n'(\tilde\s_n)(\s_{0,n}-\g)$, for some $\tilde\s_n\ra\g$.
Since $E_n(\s_{0,n})=0$ and $E_n'(\tilde \s_n)/\a(n)\ra E'(\g)<0$, 
it follows that $\s_{0,n}-\g=-\bigl(E_n(\g)/\a(n)\bigr)/\bigl(E'(\g)+o(1)\bigr)$.
This shows that $\s_{0,n}\ra\g$ at the same rate of convergence as $E_n(\g)/\a(n)\ra E(\g)=0$. 

To investigate the latter rate, define functions $H_n$ by 
$$H_n(\s)=\int_0^n L\Bigl(\frac ns\Bigr)s^{-\g} e^{-s}\Bigl(\frac1\s-\sum_{m=1}^\infty\frac{s^m}{m!(m-\s)}\Bigr)\,ds.$$
The functions $H_n$ are monotonely decreasing, with, by the assumption on $\a$,
\begin{align*}
\bigl|E_n(\s)-n^\g H_n(\s)\bigr|
&\le \int_0^n \Bigl(\frac ns\Bigr)^\b e^{-s}\Bigl|\frac1\s-\sum_{m=1}^\infty\frac{s^m}{m!(m-\s)}\Bigr|\,ds\\
&\lesssim n^\b\Bigl(\frac 1\s+\frac1{1-\s}\int_0^n\frac1{s^{1+\b}}e^{-s}(e^s-1-s)\,ds\Bigr).
\end{align*}
The right side is $O(n^\b)$, if $\s$ is bounded away from 0 and 1.

Let $\bar H_n$ be defined as $H_n$, but with the integral extended from $(0,n)$ to the full half line $(0,\infty)$.
The assumption on $\a$ does not specify $L(u)$ for $u\le 1$, and hence does not specify $L(n/s)$ for $s\ge n$,
but we can extend the function $L$ to a continuously differentiable function on $(0,\infty)$ in such a way that it
vanishes on a neighbourhood of $0$ and hence is uniformly bounded on $[0,1]$.
Then $|H_n(\s)-\bar H_n(\s)|\lesssim \int_n^\infty s^{-\g} e^{-s}(1+s^{-1}(e^s-1))\,ds=O(n^{-\g})$, if
$\s$ is bounded away from 0 and 1.

Splitting the two parts of the integrand in $\bar H_n$ and  performing partial integration on the second part we find
\begin{align*}
\bar H_n(\g)&=\int_0^\infty L\Bigl(\frac ns\Bigr)\frac{s^{-\g}}\g e^{-s}\,ds 
-\int_0^\infty L\Bigl(\frac ns\Bigr)\sum_{m=1}^\infty\frac{s^{m-\g-1}}{m!}\,e^{-s}\,ds\\
&\qquad\qquad\qquad\qquad\qquad\qquad
+\int_0^\infty L'\Bigl(\frac ns\Bigr)\frac n{s^2}\sum_{m=1}^\infty\frac{s^{m-\g}}{m!(m-\g)}\,e^{-s}\,ds\\
&=\int_0^\infty L\Bigl(\frac ns\Bigr)\frac{s^{-\g}}\g e^{-s}\,ds 
+\int_0^\infty L\Bigl(\frac ns\Bigr)(1-e^{-s})\,d\Bigl(\frac{s^{-\g}}{\g}\Bigr)\\
&\qquad\qquad\qquad\qquad\qquad\qquad\qquad
+\int_0^\infty L'\Bigl(\frac ns\Bigr)\frac n{s^2}\sum_{m=1}^\infty\frac{s^{m-\g}}{m!(m-\g)}\,e^{-s}\,ds\\
&=0+\int_0^\infty L'\Bigl(\frac ns\Bigr)\frac n{s^2}\Bigl[(1-e^{-s})\frac{s^{-\g}}{\g}
+\sum_{m=1}^\infty\frac{s^{m-\g}e^{-s}}{m!(m-\g)}\Bigr]\,ds\\
&\sim L'(n)n\int_0^\infty \frac 1{s}\Bigl[(1-e^{-s})\frac{s^{-\g}}{\g}+\sum_{m=1}^\infty\frac{s^{m-\g}e^{-s}}{m!(m-\g)}\Bigr]\,ds,
\end{align*}
since the function $s\mapsto L'(s)s$ is slowly varying at infinity, so that $L'(n/s)n/s\sim L'(n)n$ as $n\ra\infty$, for every $s>0$. 
To justify the last step we can use
Potter's theorem (\cite{Binghametal}, Theorem 1.5.6) as before, 
to infer for every $\d>0$ the existence of a constant  $M>1$ such  that $L'(n/s)n/s/(L'(n)n)\lesssim s^\d\vee s^{-\d}$, for $s<n/M$ and $n\ge M$, so
that on this interval the integrand is dominated by a multiple of $(s^\d\vee s^{-\d})\bigl[ s^{-1}(1-e^{-s})s^{-\g}+\sum_{m=1}s^{m-\g}e^{-s}/(m+1)!\bigr]\lesssim s^{-\g-\d}\wedge s^{-1-\g+\d}$,
which is integrable for sufficiently small $\d>0$. Furthermore, for $s>n/M$, the function $|L'(n/s)n/s|$ is uniformly  bounded, whence the integral
over the interval $[n/M,\infty)$ is bounded above by a multiple of $\int_{n/M}^\infty s^{-1-\g}\,ds\lesssim n^{-\g}\ll L'(n)n$.

By the
identities obtained in the beginning of the proof, the integral in the right of the preceding display is identical to
$\Gamma(1-\g)/\g^2+\Gamma(1-\g)/\g=\Gamma(1-\g)(1+\g)/\g^2$.

Combining the preceding, we find that $E_n(\g)=n^\g\bigl(\bar H_n(\g)+O(n^{-\g})\bigr)+O(n^\b)
=n^\g L'(n)n+O(n^\b)$ and hence $E_n(\g)/\a(n)= O(n^\g L'(n)n/\a(n))+ O(n^\b/\a(n))$.
\end{proof}

\begin{lemma}
\label{LemmaInvariancenkn}
Suppose that $\a(u):=\#\{ j: 1/p_j\le u\}$ is regularly varying at $\infty$ of order $\g\in (0,1)$. Then
for any $\s_n\ra\s\in (0,1)$, and independent  $M_{n,j}\sim\text{Binomial}(n,p_j)$, 
and $k_n\ge n$ with $k_n-n=O(\sqrt n)$,
\begin{align*}
\sum_{j=1}^\infty\E (1_{M_{k_n,j}\ge 1}-1_{M_{n,j}\ge 1})&=o\bigl(\a(n)^{1/2}\bigr),
\end{align*}
Furthermore, if there exists a continuously differentiable function $L: [1,\infty)\to \RR$ such that
$|\a(u)-u^\g L(u)|\le C u^\b$, for every $u>1$, and some $C>0$ and $\b<\g$,
and $|L'(u)|\le C_\d u^{-1+\d}$, for every $u>1$ and $\d>0$ and some $C_\d>0$, then 
\begin{align*}
\sum_{j=1}^\infty\E\bigl (g_{\s_n}(M_{k_n,j})-g_{\s_n}(M_{n,j})\bigr)&=o\bigl(\a(n)^{1/2}\bigr).
\end{align*}
\end{lemma}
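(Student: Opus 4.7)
The plan is to reduce both assertions to sums that can be estimated via the integral-against-$\a$ technique already used in the proof of Lemma~\ref{LemConvergenceExpectations}. For the first assertion, the summand is directly computable; for the second, I use a Stein-type recursion to telescope in $n$.

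For the first assertion, since $M_{n,j}\sim\text{Bin}(n,p_j)$,
$$\sum_j\E(1_{M_{k_n,j}\ge1}-1_{M_{n,j}\ge1})=\sum_j\bigl[(1-p_j)^n-(1-p_j)^{k_n}\bigr]\le(k_n-n)\sum_j p_je^{-np_j},$$
using $(1-p_j)^n\le e^{-np_j}$ together with $1-(1-p_j)^{k_n-n}\le(k_n-n)p_j$. The identity $np_je^{-np_j}=\int_0^{np_j}e^{-s}(1-s)\,ds$ and Fubini express $\sum_j p_je^{-np_j}$ as $n^{-1}\int_0^n\a(n/s)e^{-s}(1-s)\,ds$, and the Potter-bound dominated-convergence argument of the proof of Lemma~\ref{LemConvergenceExpectations}(i) shows this is asymptotic to $\g\Gamma(1-\g)\a(n)/n$. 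Since $k_n-n=O(\sqrt n)$ and $\a(n)/n\to0$ as $\g<1$, the whole sum is $O(\a(n)/\sqrt n)=o(\sqrt{\a(n)})$.

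For the second assertion, I plan to use the identity
$$\E f(\text{Bin}(n+1,p))-\E f(\text{Bin}(n,p))=p\,\E\bigl[f(\text{Bin}(n,p)+1)-f(\text{Bin}(n,p))\bigr],$$
obtained by conditioning on the $(n+1)$-st Bernoulli trial. Applied to $f=g_{\s_n}$, whose forward difference is $g_{\s_n}(m+1)-g_{\s_n}(m)=1_{m\ge1}/(m-\s_n)$, and iterated $k_n-n$ times, this gives
$$\sum_j\E\bigl[g_{\s_n}(M_{k_n,j})-g_{\s_n}(M_{n,j})\bigr]=\sum_{i=0}^{k_n-n-1}\sum_j p_j\,\E\Bigl[\frac{1_{\text{Bin}(n+i,p_j)\ge1}}{\text{Bin}(n+i,p_j)-\s_n}\Bigr].$$
I would then use $1/(m-\s_n)\le 2/((1-\s_n)(m+1))$ for $m\ge 1$ combined with the standard identity $\E[1/(\text{Bin}(N,p)+1)]=(1-(1-p)^{N+1})/((N+1)p)$ to bound the inner expectation by $2(1-(1-p_j)^{n+i+1})/((1-\s_n)(n+i+1)p_j)$. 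The factor $p_j$ cancels, so the contribution at each $i$ is $(1-\s_n)^{-1}(n+i+1)^{-1}\sum_j(1-(1-p_j)^{n+i+1})$, which by Lemma~\ref{LemConvergenceExpectations}(i) is asymptotic to a multiple of $\a(n+i+1)/(n+i+1)$. Summing over $i$ and invoking regular variation of $\a$ uniformly for $i\le k_n-n=O(\sqrt n)$ gives the total bound $O((k_n-n)\a(n)/n)=O(\a(n)/\sqrt n)=o(\sqrt{\a(n)})$.

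The main obstacle is choosing a sharp enough pointwise bound on $\E[1_{\text{Bin}(n,p)\ge1}/(\text{Bin}(n,p)-\s_n)]$: the naive bound $1/(1-\s_n)$ yields only $O(\sqrt n)$ after summing, which is too weak because $\sqrt{\a(n)}\ll\sqrt n$ when $\g<1$. The explicit binomial identity is what makes the argument go through, as it effectively converts one factor of $p_j$ into $1/(np_j)$ in the large-$np_j$ regime, producing the extra $1/n$ that closes the estimate. The additional smoothness hypotheses on $L$ stated in the second assertion do not seem to be exploited by this streamlined argument; they would presumably enter an alternative proof proceeding via a direct integral comparison of $n\mapsto\sum_j\E g_{\s_n}(M_{n,j})$ at the values $n$ and $k_n$.
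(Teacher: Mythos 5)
For the first assertion your argument is essentially the paper's: the same bound $(1-p)^n-(1-p)^{k_n}\le(k_n-n)\,p\,e^{-np}$, the same Fubini representation $n^{-1}\int_0^n\a(n/s)e^{-s}(1-s)\,ds$, and the same Potter/dominated-convergence step.

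For the second assertion your route is genuinely different from the paper's and substantially simpler. The paper expands $\sum_j\E g_{\s_n}(M_{n,j})$ into an integral against $d\a$ via the binomial identity of Lemma~\ref{LemmaBinomialIdentity}, then carries out a long, delicate comparison at sample sizes $n$ and $k_n$, involving the decomposition $\a(u)=u^\g L(u)+O(u^\b)$, Stirling bounds, the $(k-n)$ binomial splitting $\binom kl=\sum_i\binom n{l-i}\binom{k-n}i$, moments of Beta and Gamma variables, and the derivative bound $|L'(u)|\le C_\d u^{-1+\d}$. You instead telescope in the sample size via the coupling $\mathrm{Bin}(n+1,p)\stackrel{d}=\mathrm{Bin}(n,p)+\mathrm{Bernoulli}(p)$, giving the exact increment formula $\E g_{\s}(\mathrm{Bin}(n+1,p))-\E g_{\s}(\mathrm{Bin}(n,p))=p\,\E[1_{\mathrm{Bin}(n,p)\ge1}/(\mathrm{Bin}(n,p)-\s)]$, and then bound each increment via the identity $\E[1/(\mathrm{Bin}(N,p)+1)]=(1-(1-p)^{N+1})/((N+1)p)$. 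The resulting per-increment order $O(\a(n)/n)$, summed over $O(\sqrt n)$ increments, gives $O(\a(n)/\sqrt n)=o(\a(n)^{1/2})$. As you note, this never uses the extra hypotheses on $L$; so your proof is not only shorter but also establishes the assertion under the regular-variation hypothesis alone, strengthening the lemma. One small imprecision: you invoke Lemma~\ref{LemConvergenceExpectations}(i) for $\sum_j(1-(1-p_j)^N)=O(\a(N))$, but that item concerns Poisson variables, i.e.\ $\sum_j(1-e^{-Np_j})$. The binomial version you need follows by the two-sided comparison $e^{-2Np}\le(1-p)^N\le e^{-Np}$ for $p\le 1/2$ (plus a finite correction for the at most finitely many $p_j>1/2$), together with regular variation of $\a$; you should include this half-line. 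With that patched, the argument is correct.
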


\begin{proof}
Because $\Pr(M_{n,j}=0)=(1-p_j)^n$, the left side of the first assertion is equal to 
$$\sum_{j=1}^\infty \bigl((1-p_j)^n-(1-p_j)^{k_n}\bigr)=\int_1^\infty\Bigl(1-\frac 1 u\Bigr)^n\Bigl( 1-\Bigl(1-\frac 1u\Bigr)^{k_n-n}\Bigr)\,d\a(u).$$
By the inequalities $1-x\le e^{-x}$, for $x\in\RR$, and $1-(1-x)^r\le r x$, for $x\in [0,1]$ and $r\in\NN$, this is bounded above by
$$\int_1^\infty e^{-n/u}(k_n-n)\frac 1u\,d\a(u)=\frac{k_n-n}n \int_0^n\a\Bigl(\frac ns\Bigr)e^{-s}(1-s)\,ds,$$
by Fubini's theorem, since $e^{-n/u}(n/u)=\int_0^{n/u} e^{-s}(1-s)\,ds$.
As in the proof of Lemma~\ref{LemConvergenceExpectations}, the integral is $\a(n)\bigl(\Gamma(1-\g)-\Gamma(2-\g)\bigr)(1+o(1))$.
Therefore, the preceding display divided by $\a(n)^{1/2}$ is of the order $\a(n)^{1/2}(k_n-n)/n\sim \a(n)^{1/2}n^{-1/2}$.
This tends to zero, as for every $\d>0$ we have that $\a(n)\le n^{\g+\d}$ eventually, by Potter's theorem, where $\g<1$ by assumption.

To prove the second assertion we first write 
\begin{align*}
\sum_{j=1}^\infty\E g_{\s_n}(M_{n,j})&=\sum_{j=1}^\infty \sum_{m=2}^ng_\s(m)\binom n m p_j^m(1-p_j)^{n-m}\\
&=\sum_{m=2}^n g_\s(m)\binom n m\int_1^\infty \Bigl(\frac 1u\Bigr)^{m}\Bigl(1-\frac 1u\Bigr)^{n-m}\,d\a(u).
\end{align*}
Writing $(1/u)^m(1-1/u)^{n-m}=\int_0^{1/u}s^{m-1}(1-s)^{n-m-1}(m-ns)\,ds$ (for $m>0$) and 
applying Fubini's theorem, we can rewrite this as 
\begin{align*}
&\sum_{m=2}^n g_\s(m)\binom n m\int_0^1\a\Bigl(\frac 1s\Bigr) s^{m-1}(1- s )^{n-m-1}(m-ns)\,ds\\
&\qquad=\int_0^1\sum_{l=1}^{n-1}\frac  1{l-\s}\sum_{m=l+1}^n\binom n m s^{m-1}(1- s)^{n-m-1}(m-ns)\,\a\Bigl(\frac 1s\Bigr)\,ds\\
&\qquad=\int_0^1\sum_{l=1}^{n-1}\frac {n-l}{l-\s} \binom n l s^{l}(1-s)^{n-l-1}\,\a\Bigl(\frac 1s\Bigr)\,ds,
\end{align*}
by Lemma~\ref{LemmaBinomialIdentity}. Thus the left side of the second assertion can be written in the form, with $k=k_n$,
\begin{align}
&\int_0^1\sum_{l=1}^{n-1}\frac {s^l(1-s)^{n-l-1}}{l-\s} \biggl[(k-l)\binom k l(1-s )^{k-n}-(n-l)\binom n l\biggr]\,\a\Bigl(\frac 1s\Bigr)\,ds\nonumber\\
&\qquad\qquad +\int_0^1\sum_{l=n+1}^k\frac {k-l}{l-\s} \binom k l s^{l}(1-s)^{k-l-1}\,\a\Bigl(\frac 1s\Bigr)\,ds.\label{EqDifferencekl}
\end{align}
Because $\a(1/s)\le 1/s$, the second term is bounded above by $\sum_{l>n}(k-l)/(l-\s)\binom k l B(l,k-l)$, for
$B$ the beta function. This is further bounded by $k\sum_{l>n} 1/((l-\s)l)\lesssim k/n\lesssim 1$.

By the assumption that $|\a(u)-u^\g L(u)|\le C u^\b$, if in the first term, we replace $\a(1/s)$ by $s^{-\g}L(1/s)$, the error
is bounded above by a multiple of
$$\int_0^1\sum_{l=1}^{n-1}\frac {s^l(1-s)^{n-l-1}}{l-\s} \Bigl|(k-l)\binom k l(1-s )^{k-n}-(n-l)\binom n l\Bigr|\,s^{-\b}\,ds.$$
The sum of the terms with $l>\sqrt n$ is bounded above by $a_{k,n}+a_{n,n}$, for
$$a_{k,n}=\sum_{l>\sqrt n}\frac{k-l}{l-\s}\binom kl B(l-\b+1,k-l)
\lesssim \sum_{l>\sqrt n}\frac{\Gamma(l-\b+1)}{(l-\s)l!}\frac{k!}{\Gamma(k-\b+1)}.$$
In view of Lemma~\ref{LemmaStirling}, $a_{k,n}$ 
is bounded above by a multiple of $\sqrt n^{-\b} k^\b=O(n^{\b/2})=o\bigl(\a(n)^{1/2}\bigr)$.
In the sum of the terms with $l\le \sqrt n$, we decompose $k-l=(k-n)+ (n-l)$ and
$\binom kl=\sum_i\binom n{l-i}\binom{k-n}i$, and bound
\begin{align*}
&\Bigl|(k-l)\binom k l(1-s )^{k-n}-(n-l)\binom n l\Bigr|
\le (k-n)\binom k l (1-s)^{k-n}\\
&\qquad\qquad\qquad+(n-l)\Bigl[\binom nl \bigl(1-(1-s)^{k-n}\bigr)+\sum_{i\ge 1}\binom n{l-i}\binom{k-n}i\Bigr].
\end{align*}
The middle term in the right is bounded above by $(n-l)\binom nl (k-n)s$. Thus the sum of the terms
with $l\le \sqrt n$ is bounded above by the sum of the three integrals
\begin{align*}
&\int_0^1 \sum_{l \le \sqrt n} \frac{s^l (1 - s) ^{n - l - 1}}{l - \sigma} (k - n) \binom k l (1 - s)^{k-n} s^{-\beta}  d s\\
&=\sum_{l\le \sqrt n} \frac{B(l+1-\b,k-l)}{l-\s}(k-n)\binom k l
=\sum_{l\le \sqrt n} \frac{\Gamma(l+1-\b)}{(l-\s)l!} \frac{\Gamma(k-l)}{(k-l)!} \frac{(k-n)k!}{\Gamma(k+1-\b)}\\
&\qquad\lesssim \sum_{l\le \sqrt n}\frac1{l^{1+\b}}\frac{k-n}{k-l}k^\b\lesssim n^{\b-1/2}\le n^{\b/2},\\
&\int_0^s \sum_{l \le \sqrt n} \frac{s^l (1 - s)^{n - l - 1}}{l - \sigma} (n - l) \binom n l (k -n) s s^{-\beta} ds\\
&=\sum_{l\le \sqrt n} \frac{B(l+2-\b,n-l)}{l-\s}(k-n)(n-l)\binom n l
=\sum_{l\le \sqrt n} \frac{\Gamma(l+2-\b)}{(l-\s)l!} \frac{(k-n)n!}{\Gamma(n+2-\b)}\\
&\qquad\lesssim \sqrt n^{1-\b}(k-n)n^{\b-1}\lesssim n^{\b/2},\\
&\int_0^1 \sum_{l \le \sqrt n} \frac{ s^l (1 - s)^{n - l - 1}}{l - \sigma} (n - l) \sum_i \binom{n}{l - i}\binom{k - n}{i} s^{-\beta} ds\\
&=\sum_{l\le \sqrt n}\sum_{i\ge 1} \frac{B(l+1-\b,n-l)}{l-\s}(n-l)\binom n{l-i}\binom{k-n} i\\
&\qquad=\sum_{l\le \sqrt n}\sum_{i\ge 1} \frac{\Gamma(l+1-\b)}{(l-\s)(l-i)!} \frac{(n-l)!}{(n-l+i)!}\frac{n!}{\Gamma(n+1-\b)}\binom{k-n}i\\
&\qquad \lesssim \sum_{l\le \sqrt n}\sum_{i\ge 1} l^{i-1-\b}\frac{1}{(n/2)^i}n^\b\binom{k-n}i
\le\sum_{i\ge 1}\frac{\sqrt n^{i-\b}}{(n/2)^i}n^\b\binom{k-n}i\\
&\qquad\le \Bigl(1+\frac 2{\sqrt n}\Bigr)^{k-n}n^{\b/2}
\lesssim n^{\b/2}.
\end{align*}
We conclude that replacing $\a(1/s)$ by $s^{-\g}L(1/s)$ in the first part of \eqref{EqDifferencekl} makes a difference of at most
of the order $n^{\b/2}=o\bigl(\a(n)^{1/2}\bigr)$. Finally, we consider the expression
\begin{align*}
&\int_0^1\sum_{l=1}^{n-1}\frac {s^l(1-s)^{n-l-1}}{l-\s} \biggl[(k-l)\binom k l(1-s )^{k-n}-(n-l)\binom n l\biggr]\,s^{-\g}L\Bigl(\frac 1s\Bigr)\,ds\\
&\qquad=\sum_{l=1}^{n-1}\frac{\Gamma(l+1-\g)}{(l-\s)l!}\Big[\frac{k!}{\Gamma(k+1-\g)}\E L(1/S_{l,k})-
\frac{n!}{\Gamma(n+1-\g)}\E L(1/S_{l,n})\Bigr],
\end{align*}
where $S_{l,k}$ is a random variable with the beta distribution with parameters $l+1-\g$ and $k-l$. 
The bound $|L'(u)|\le C_\d u^{-1+\d}$ gives that $L(u)\lesssim u^\d$, and hence
$|\E L(1/S_{l,k})|\lesssim \E S_{l,k}^{-\d}=B(l+1-\g-\d,k-l)/B(l+1-\g,k-l)\lesssim k^\d$. Therefore, after bounding the
difference with the help of the triangle inequality, 
the sum of the terms with $l> m$, can be bounded by $b_{k,m}+b_{n,m}$, for 
$$b_{k,m}=\sum_{l>m}\frac{\Gamma(l+1-\g)}{(l-\s)l!}\frac{k!}{\Gamma(k+1-\g)} k^\d
\lesssim \Bigl(\frac k m\Bigr)^\g k^\d.$$
For $m=n^{1/2+\d/\g+\eta}$, the right side is of the order $n^{\g/2-\eta \g}=o\bigl(\a(n)^{1/2}\bigr)$, for any $\eta>0$.
The sum of the terms with $l\le m$ is bounded above by
\begin{align*}
&\sum_{l=1}^m\frac{\Gamma(l+1-\g)}{(l-\s)l!}\Big[\frac{k!}{\Gamma(k+1-\g)}-\frac{n!}{\Gamma(n+1-\g)}\Bigr]\E L(1/S_{l,k})\\
&\qquad\qquad\qquad\qquad\qquad\qquad\qquad+ \frac{n!}{\Gamma(n+1-\g)}\Bigl[\E L(1/S_{l,k})-\E L(1/S_{l,n})\Bigr]\\
&\qquad\lesssim
\sum_{l=1}^m \frac1{l^{1+\g}}\Bigl[k^\g-n^\g+O\Bigl(\frac1n\Bigr)\Bigr]k^\d
+\sum_{l=1}^m\frac{n^\g}{l^{1+\g}}\Bigl|\E L(1/S_{l,k})-\E L(1/S_{l,n})\Bigr|.
\end{align*}
Here $k^\g-n^\g=n^\g\bigl((1+(k-n)/n)^\g-1\bigr)\lesssim n^{\g-1/2}$, so that the
first term is of the order $n^{\g-1/2}k^\d=o\bigl(\a(n)^{1/2}\bigr)$, for sufficiently small $\d>0$,
and hence is asymptotically negligible. For the second term we represent $S_{l,k}$ and $S_{l,n}$ using independent, gamma variables
$\bar\Gamma_l$, $\Gamma_{n-l}$ and $\Gamma_{k-n}$ with shape parameters $l+1-\g$, $n-l$ and $k-n$, and write
$|\E L(1/S_{l,k})-\E L(1/S_{l,n})|$ as
\begin{align*}
&\Bigl|\E L\Bigl(\frac{\bar\Gamma_l+\Gamma_{n-l}+\Gamma_{k-n}}{\bar\Gamma_l}\Bigr)
-\E L\Bigl(\frac{\bar\Gamma_l+\Gamma_{n-l}}{\bar\Gamma_l}\Bigr)\Bigr|\\
&\quad=\Bigl|\E \int _0^{\Gamma_{k-n}} L'\Bigl(\frac{\bar\Gamma_l+\Gamma_{n-l}+u}{\bar\Gamma_l}\Bigr)\frac1{\bar\Gamma_l}\,du\Bigr|\\
&\quad\lesssim \int_0^\infty\Pr(\Gamma_{k-n}>u)\E \frac{1}{(\bar\Gamma_l+\Gamma_{n-l}+u)^{1-\d}}\frac1{\bar\Gamma_l^\d}\,du
\le \E \Gamma_{k-n} \E  \frac{1}{\Gamma_{n-l}^{1-\d}}\E \frac1{\bar\Gamma_l^\d}.
\end{align*}
The three expecations can be computed explicitly in terms of the gamma function. Substituting the resulting
expressions in the second sum of
 the second last display, we see that this is bounded above by 
$$\sum_{l=1}^m\frac{n^\g(k-n)}{l^{1+\g}} \frac{\Gamma(n-l-1+\d)}{\Gamma(n-l)}\frac{\Gamma(l+1-\g-\d)}{\Gamma(l+1-\g)}
\lesssim  n^\g(k-n)\sum_{l=1}^m  \frac{1}{l^{1+\g+\d}(n-l)^{1-\d}}.$$
Since the summation indices satisfy $l\le m\ll n$, so that $n-l\gtrsim n/2$,
this is of the order $n^{-1/2+\g+\d}= o\bigl(\a(n)^{1/2}\bigr)$, for sufficiently small $\d$.
\end{proof}

\subsection{Supporting lemmas}

\begin{lemma}
\label{LemmaPoissonization}
Suppose that $V_{k,n}$, for $k,n\in\NN$, 
 are random variables independently of random variables $N_n\sim \text{Poisson}(n)$ so that
$a_n(V_{N_n,n}-E_n)\weak N(0,\tau^2)$ and $a_n(V_{k_n,n}-V_{n,n})\ra0$ in probability for every $k_n$ with $|k_n-n|=O(\sqrt n)$,
for $n\ra\infty$ and given numbers $a_n$ and $E_n$. Then $a_n(V_{n,n}-E_n)\weak N(0,\tau^2)$.
\end{lemma}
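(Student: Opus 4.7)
The plan is to carry out a standard de-Poissonization: write
\[
a_n(V_{n,n}-E_n)=a_n(V_{N_n,n}-E_n)-a_n(V_{N_n,n}-V_{n,n}),
\]
and invoke Slutsky's lemma. The first term on the right is assumed to converge weakly to $N(0,\tau^2)$, so the task reduces to showing that the remainder $a_n(V_{N_n,n}-V_{n,n})$ tends to zero in probability.

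First I would reduce to a bounded-range event for $N_n$. Since $N_n\sim \text{Poisson}(n)$ has mean and variance both equal to $n$, Chebyshev's inequality gives $\Pr(|N_n-n|>M\sqrt n)\le 1/M^2$, so for any $\eta>0$ there is an $M$ with $\Pr(|N_n-n|>M\sqrt n)<\eta$ uniformly in $n$. It then suffices to control the remainder on the event $A_{n,M}=\{|N_n-n|\le M\sqrt n\}$.

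The key step is to upgrade the hypothesis, which applies to every \emph{deterministic} sequence $k_n$ with $|k_n-n|=O(\sqrt n)$, into uniform convergence on $[n-M\sqrt n, n+M\sqrt n]$. For fixed $\e>0$, set
\[
f_n(k)=\Pr\bigl(a_n|V_{k,n}-V_{n,n}|>\e\bigr).
\]
I claim $\sup_{|k-n|\le M\sqrt n}f_n(k)\to 0$. If this were to fail, one could extract a subsequence $n_j$ and integers $k_j$ with $|k_j-n_j|\le M\sqrt{n_j}$ and $f_{n_j}(k_j)\ge \d_0>0$. Splicing these into a deterministic sequence $\tilde k_n$ (equal to $k_j$ at $n_j$ and to $n$ otherwise) would produce a sequence with $|\tilde k_n-n|\le M\sqrt n$ for which $a_n(V_{\tilde k_n,n}-V_{n,n})$ does not tend to zero in probability, contradicting the hypothesis.

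Finally, by independence of $N_n$ and the $V$-variables, conditioning on $N_n$ and using Fubini gives
\[
\Pr\bigl(a_n|V_{N_n,n}-V_{n,n}|>\e\bigr)\le \sup_{|k-n|\le M\sqrt n}f_n(k)+\Pr(|N_n-n|>M\sqrt n).
\]
The first term tends to $0$ by the claim and the second is at most $\eta$; letting $\eta\downarrow 0$ finishes the de-Poissonization. The only real content is the subsequence-splicing argument producing uniform convergence from pointwise convergence along arbitrary admissible deterministic sequences; everything else is Chebyshev plus Slutsky.
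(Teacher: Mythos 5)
Your proof is correct and follows essentially the same de-Poissonization strategy as the paper: restrict $N_n$ to a window $|N_n-n|\le M\sqrt n$ of high probability, then invoke the hypothesis for deterministic sequences, the only cosmetic difference being that the paper tests weak convergence against bounded Lipschitz functions while you use the Slutsky decomposition with a remainder tending to zero in probability. Your explicit subsequence-splicing argument upgrading the sequence-wise hypothesis to uniformity over the window is a step the paper's proof leaves implicit (its ``by the preceding display'' when summing over all $k$ in the window needs exactly this), so your write-up is, if anything, the more careful of the two.
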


\begin{proof}
For any Lipschitz function $h: \RR\to[0,1]$ and $k_n$ as given, as $n\ra\infty$,
$$\Bigl|\E h\bigl(a_n(V_{k_n,n}-E_n)\bigr)-\E h\bigl(a_n(V_{n,n}-E_n)\bigr)\Bigr|
\le \E a_n|V_{k_n,n}-V_{n,n}|\wedge 1\ra 0.$$
By the central limit theorem the probability $\Pr\bigl(|N_n-n|>\sqrt n M\bigr)$ can be made arbitrarily small
uniformly in $n$ by choosing sufficiently large $M$. Then $\E h\bigl(a_n(V_{n,n}-E_n)\bigr)$ is arbitrarily close
to 
\begin{align*}
&\E h\bigl(a_n(V_{n,n}-E_n)\bigr)\sum_{k: |k-n|\le \sqrt n M}\Pr(N_n=k)\\
&\qquad\qquad=\sum_{k: |k-n|\le \sqrt n M}\E h\bigl(a_n(V_{k,n}-E_n)\bigr)\Pr(N_n=k)+o(1),\end{align*}
by the preceding display, as $n\ra\infty$, for every fixed $M$. The sum in the right side is arbitrarily close to 
$\E h\bigl(a_n(V_{N_n,n}-E_n)\bigr)$ uniformly in $n$, if $M$ is chosen sufficiently large, which tends to $\E h(\tau Z)$, for $Z\sim N(0,1)$
as $n\ra\infty$, by assumption. We conclude that $\E h\bigl(a_n(V_{n,n}-E_n)\bigr)$ is arbitrarily close to $\E h(\tau Z)$, as $n\ra\infty$.
\end{proof}

\begin{lemma}
\label{LemmaLindeberg}
If $X_{n,1},X_{n,2},\ldots$ are independent random variables with $s_n^2:=\sum_{j=1}^\infty\var X_{n,j}<\infty$ and
$s_n^{-2}\sum_{j=1}^\infty\E X_{n,j}^21_{|X_{n,j}|>\e s_n}\ra 0$, for every $\e>0$, then
$\sum_{j=1}^\infty (X_{n,j}-\E X_{n,j})/s_n\weak N(0,1)$.
\end{lemma}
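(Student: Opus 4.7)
The plan is to reduce the infinite-series statement to the classical Lindeberg CLT for triangular arrays via truncation. First, by replacing $X_{n,j}$ with $X_{n,j}-\E X_{n,j}$, we may assume the variables are centered; the corresponding Lindeberg-type bound for the centered variables follows from the one assumed for $X_{n,j}$ at the cost of a constant factor, using $|\E X_{n,j}|\le \e s_n+\sqrt{\E X_{n,j}^2\,1_{|X_{n,j}|>\e s_n}}$ via Cauchy-Schwarz, so that the centered variables are close to $X_{n,j}$ in the sense needed.

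Second, because $s_n^2<\infty$ for each fixed $n$, I would pick an integer $k_n$ so that $\d_n:=s_n^{-2}\sum_{j>k_n}\var X_{n,j}\ra 0$. Decompose the normalised sum as $S_n=T_n+R_n$, where $T_n:=s_n^{-1}\sum_{j\le k_n}X_{n,j}$ and $R_n:=s_n^{-1}\sum_{j>k_n}X_{n,j}$. Independence and centering give $\var R_n=\d_n\ra 0$, so $R_n\prob 0$ by Chebyshev's inequality.

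Third, I would apply the finite-array Lindeberg CLT to $T_n$. Setting $\tilde s_n^2:=\sum_{j\le k_n}\var X_{n,j}$, one has $\tilde s_n^2/s_n^2\ra 1$. Since $\tilde s_n\sim s_n$, for any $\e>0$ and large $n$ the event $\{|X_{n,j}|>\e\tilde s_n\}$ is contained in $\{|X_{n,j}|>(\e/2) s_n\}$, and $\tilde s_n^{-2}\le 2 s_n^{-2}$. The assumed Lindeberg condition therefore entails the same condition for the truncated array with scale $\tilde s_n$, and the classical finite-dimensional Lindeberg theorem yields $\tilde s_n^{-1}\sum_{j\le k_n}X_{n,j}\weak N(0,1)$; multiplying by $\tilde s_n/s_n\ra 1$ gives $T_n\weak N(0,1)$.

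Finally, Slutsky's lemma combined with $R_n\prob 0$ gives $S_n=T_n+R_n\weak N(0,1)$. There is no serious obstacle: the only point requiring mild care is the transfer of the Lindeberg condition from the full series to the truncated sum, which is immediate once $\tilde s_n/s_n\ra 1$, and the diagonal choice of $k_n$, which is possible because the tail variance vanishes as $k\ra\infty$ for every fixed $n$.
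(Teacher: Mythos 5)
Your proof is correct and follows essentially the same route as the paper's: truncate the infinite sum at a diagonal index $k_n$ chosen so the tail variance vanishes, kill the tail by Chebyshev, transfer the Lindeberg condition to the centered finite array (the paper does this via $\max_j \E X_{n,j}^2/s_n^2\ra0$, you via the Cauchy--Schwarz bound on $|\E X_{n,j}|$, which is the same estimate), and invoke the classical finite-array Lindeberg CLT plus Slutsky.
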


\begin{proof}
The variables $Y_{n,j}=(X_{n,j}-\E X_{n,j})/s_n$ have mean zero and $\sum_{j}\E Y_{n,j}^2=1$. Choose 
integers $k_n\uparrow\infty$ such that  $\sum_{j\le k_n}\E Y_{n,j}^2\uparrow 1$. 
Then the sequence $\sum_{j>k_n}Y_{n,j}$ tends to zero in
second mean and hence it suffices to show that $\sum_{j\le k_n}Y_{n,j}\weak N(0,1)$. The latter follows from the
Lindeberg central limit theorem provided that 
$\sum_{j\le k_n}\E Y_{n,j}^21_{|Y_{n,j}|>\e}\ra 0$, for every $\e>0$.
To see that this is satisfied, first note that the Lindeberg condition implies that $\max_{j}\E X_{n,j}^2/s_n^2\ra0$ and hence both 
$s_n^{-2}\E \sum_j |\E X_{n,j}|^21_{|X_{n,j}|>\e s_n}\le o(1)\sum_j \Pr(|X_{n,j}|>\e s_n)\ra 0$ and
$|\E X_{n,j}|\le \e s_n$, for every $j$ eventually, for every fixed $\e>0$. This shows that
the variables $X_{n,j}$ also satisfy the centered ``infinite Lindeberg
condition'' $\sum_j\E Y_{n,j}^21_{|Y_{n,j}|>\e}\ra 0$, for every $\e>0$, which implies
the Lindeberg condition for the finite array $Y_{n,1},\ldots, Y_{n,k_n}$.
\end{proof}

\begin{lemma}
\label{LemmaBinomialIdentity}
For every $p\in [0,1]$ and $l\in\NN\cup\{0\}$ and $n\in \NN$,
$$\sum_{m=l+1}^n\binom n m p^{m-1}(1-p)^{n-m-1}(m-np)=(n-l)\binom nlp^l(1-p)^{n-l-1}.$$
\end{lemma}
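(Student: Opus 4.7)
The key observation is that the summand has an antiderivative with respect to $p$: namely,
$$\frac{d}{dp}\bigl[p^m(1-p)^{n-m}\bigr]=mp^{m-1}(1-p)^{n-m}-(n-m)p^m(1-p)^{n-m-1}=p^{m-1}(1-p)^{n-m-1}(m-np).$$
Thus the left side of the identity equals $\frac{d}{dp}\sum_{m=l+1}^n\binom nm p^m(1-p)^{n-m}$, and by the binomial theorem this is $-\frac{d}{dp}\sum_{m=0}^l\binom nm p^m(1-p)^{n-m}$. So the whole problem reduces to computing the derivative of a partial binomial sum.

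The plan is then to evaluate this derivative by a telescoping argument. Writing $\frac{d}{dp}\bigl[\binom nm p^m(1-p)^{n-m}\bigr]=a_m-b_m$ with $a_m=n\binom{n-1}{m-1}p^{m-1}(1-p)^{n-m}$ (for $m\ge1$, else $0$) and $b_m=n\binom{n-1}{m}p^m(1-p)^{n-m-1}$ (for $m\le n-1$), one checks that $a_{m+1}=b_m$. Hence $\sum_{m=0}^l(a_m-b_m)=-b_l=-n\binom{n-1}{l}p^l(1-p)^{n-l-1}$. Plugging this in, the left side equals $n\binom{n-1}{l}p^l(1-p)^{n-l-1}$, and the identity $n\binom{n-1}{l}=(n-l)\binom nl$ finishes the proof.

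There is no real obstacle here: once one spots the antiderivative in step~1, the rest is the standard telescoping identity for differences of binomial coefficients. The only thing to watch is the boundary behaviour when $l=0$, $l=n-1$, or $p\in\{0,1\}$, but the formula continues to hold either by direct inspection or by continuity in $p$, since both sides are polynomials in $p$.
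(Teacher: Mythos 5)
Your proof is correct, and it takes a genuinely different route from the paper's. The paper argues probabilistically: letting $X_{n-1}$ and $X_n$ count successes in the first $n-1$ and $n$ Bernoulli$(p)$ trials, it uses the event-inclusion $\{X_n\ge l+1\}\subset\{X_{n-1}\ge l\}$ with set difference $\{X_{n-1}=l,\,B_n=0\}$ to get $\Pr(X_{n-1}\ge l)-\Pr(X_n\ge l+1)=\Pr(X_{n-1}=l)(1-p)$, then multiplies by $n/(1-p)$ and applies the same two binomial coefficient identities you use at the end. You instead observe that the summand is $\frac{d}{dp}\bigl[p^m(1-p)^{n-m}\bigr]$, reduce the left side via the binomial theorem to the derivative of a lower partial sum, and telescope via $a_{m+1}=b_m$. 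Both proofs hinge on $m\binom nm=n\binom{n-1}{m-1}$ and $(n-m)\binom nm=n\binom{n-1}{m}$, and indeed your cancellation $a_{m+1}=b_m$ encodes the same combinatorial coincidence that the paper phrases as ``the last trial either flips the tail event or doesn't.'' Your derivative observation makes the appearance of $(m-np)$ in the summand transparent, which is an appealing feature; the paper's probabilistic phrasing is perhaps more natural given the surrounding material. One small point: the term $m=n$ of the left side contains the factor $(1-p)^{-1}$ as written, so the sum is only literally a polynomial in $p$ after the cancellation with $m-np=n(1-p)$; your remark that the endpoint cases follow by continuity (or inspection) should be read with that simplification understood.
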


\begin{proof}
For $X_{n-1}$ and $X_n$ the numbers of successes in the first $n-1$ and $n$ independent Bernoulli
trials with success probability $p$,  we have $\{X_n\ge l+1\}\subset\{X_{n-1}\ge l\}$ and
$\{X_{n-1}\ge l\}-\{X_n\ge l+1\}=\{X_{n-1}=l, B_n=0\}$, for $B_n$ the outcome of the $n$th trial.
This gives the identity $\Pr(X_{n-1}\ge l)-\Pr(X_n\ge l+1)= \Pr(X_{n-1}=l)(1-p)$.
We multiply this by $n/(1-p)$ to obtain the identity given by the lemma, which we first rewrite using
that $m\binom n m=n\binom {n-1}{m-1}$ and $(n-l)\binom n l= n\binom {n-1} l$.
\end{proof}

\begin{lemma}
\label{LemmaStirling}
For every $\g\in(0,1)$, as $n\ra\infty$,
$$\frac{\Gamma(n-\g)n^\g}{\Gamma(n)}=1+O\Bigl(\frac1 n\Bigr).$$
\end{lemma}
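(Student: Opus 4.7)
The plan is to reduce the ratio to a standard Stirling-type asymptotic for the logarithm of the Gamma function, and then to expand. Using the classical expansion $\log\Gamma(x)=(x-\tfrac12)\log x-x+\tfrac12\log(2\pi)+O(1/x)$, applied at $x=n-\g$ and $x=n$, and forming the difference, one obtains
$$\log\frac{\Gamma(n-\g)}{\Gamma(n)}=(n-\g-\tfrac12)\log(n-\g)-(n-\g)-(n-\tfrac12)\log n+n+O(1/n).$$
Next I would expand $\log(n-\g)=\log n+\log(1-\g/n)=\log n-\g/n+O(1/n^2)$, so that $(n-\g-\tfrac12)\log(n-\g)=(n-\g-\tfrac12)\log n-\g+O(1/n)$. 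Substituting back, the $\log n$-terms collapse to $-\g\log n$, the $\pm\g$ and $\pm n$ constants cancel, and we are left with
$$\log\frac{\Gamma(n-\g)}{\Gamma(n)}=-\g\log n+O(1/n).$$
Exponentiating and using $e^{O(1/n)}=1+O(1/n)$ gives $\Gamma(n-\g)/\Gamma(n)=n^{-\g}\bigl(1+O(1/n)\bigr)$, which rearranges to the stated identity.

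An equally short alternative is to quote the Tricomi--Erd\'elyi ratio expansion $\Gamma(x+a)/\Gamma(x+b)=x^{a-b}\bigl[1+\tfrac{(a-b)(a+b-1)}{2x}+O(1/x^2)\bigr]$ with $a=-\g$ and $b=0$, which yields the conclusion with an explicit $O(1/n)$ constant equal to $\g(1+\g)/2$.

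There is no real obstacle here: the statement is a routine consequence of Stirling's formula, and the only care required is to track the cancellation of the $(n-\tfrac12)\log n$ and $\pm n$ terms accurately, and to verify that the error terms arising from $\log(1-\g/n)$ and from Stirling's remainder are uniformly $O(1/n)$ for $\g$ in a fixed compact subinterval of $(0,1)$, which is immediate.
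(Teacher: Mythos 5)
Your proof is correct and uses essentially the same tool as the paper, namely Stirling's approximation; the paper applies the multiplicative form $\Gamma(x)=\sqrt{2\pi/x}\,(x/e)^x(1+O(1/x))$ directly to the quotient, whereas you use the logarithmic form and then exponentiate, but these are the same computation in different notation. The closing remark about the Tricomi--Erd\'elyi ratio expansion is a fine one-line shortcut, though not what the paper does.
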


\begin{proof}
By Stirling's approximation, the quotient is
\begin{align*}\frac{\sqrt{{2\pi}/{(n-\g)}} \left(\frac{n-\g}e\right)^{n-\g}\bigl(1+O(1/n)\bigr)n^\g}
{\sqrt{{2\pi}/{n}} \left(\frac{n}e\right)^{n}\bigl(1+O(1/n)\bigr)}
&=\Bigl(\frac{n-\g}n\Bigr)^ne^\g\Bigl(1+O\Bigl(\frac1n\Bigr)\Bigr)\\
&=\Bigl(e^{-\g}+O\Bigl(\frac1n\Bigr)\Bigr)e^\g\Bigl(1+O\Bigl(\frac1n\Bigr)\Bigr).
\end{align*}
\end{proof}

\begin{lemma}
\label{LemmaRV}
If  $\a: (1,\infty)\to \RR$ satisfies $|\a(u)-u^\g L(u)|\le C u^\b$, for $u>1$ and $\b<\g$ and a slowly varying function $L$, 
then $\a$ is regularly varying of order $\g$.
\end{lemma}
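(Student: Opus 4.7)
The plan is to verify the definition of regular variation directly: for every fixed $u>0$, show that $\a(nu)/\a(n)\to u^\g$ as $n\to\infty$. Using the hypothesis, I would write
\[
\a(nu)=(nu)^\g L(nu)+R(nu),\qquad \a(n)=n^\g L(n)+R(n),
\]
where $|R(v)|\le Cv^\b$ for $v>1$, and then divide numerator and denominator of the quotient by $n^\g L(n)$ to obtain
\[
\frac{\a(nu)}{\a(n)}=\frac{u^\g\,L(nu)/L(n)+R(nu)/(n^\g L(n))}{1+R(n)/(n^\g L(n))}.
\]

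The main point is to control the two error terms. Since $L$ is slowly varying, the standard Potter-type bound (or the basic fact that $n^{\eps}L(n)\to\infty$ and $n^{-\eps}L(n)\to 0$ for every $\eps>0$) yields $L(n)\gg n^{-\eps}$ for any $\eps>0$. Choosing $\eps$ small enough so that $\b-\g+\eps<0$ gives
\[
\frac{|R(n)|}{n^\g L(n)}\lesssim \frac{n^\b}{n^\g L(n)}=\frac{n^{\b-\g}}{L(n)}\ll n^{\b-\g+\eps}\to 0,
\]
and the same argument (with $n$ replaced by $nu$, $u$ fixed) shows that $R(nu)/(n^\g L(n))\to 0$. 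Meanwhile, slow variation of $L$ itself delivers $L(nu)/L(n)\to 1$ for every fixed $u>0$. Combining these facts,
\[
\frac{\a(nu)}{\a(n)}\to \frac{u^\g\cdot 1+0}{1+0}=u^\g,
\]
which is regular variation of order $\g$.

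There is no real obstacle here; the only subtlety is to invoke the correct elementary property of slowly varying functions to make sure the remainder term, whose absolute size is $O(n^\b)$, is genuinely negligible compared with the leading term $n^\g L(n)$. This is guaranteed because the gap $\g-\b$ is strictly positive and $L$ grows (or decays) more slowly than any positive power of $n$.
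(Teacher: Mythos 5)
Your proof is correct and follows essentially the same route as the paper: both decompose $\a$ into the leading term $u^\g L(u)$ plus a remainder bounded by $Cu^\b$, invoke the standard fact $L(n)\gtrsim n^{-\d}$ for every $\d>0$ to show that the remainder is negligible relative to $n^\g L(n)$, and use slow variation to get $L(nu)/L(n)\to 1$. The only cosmetic difference is that you divide through by $n^\g L(n)$ while the paper phrases the same cancellation as $n^\g L(n)/\a(n)\to 1$.
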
 

\begin{proof}
Since $L$ is slowly varying, we have $L(n)\gtrsim n^{-\d}$, for every $\d>0$, and hence
$n^\g L(n)\gtrsim n^{\g-\d}\gg n^\b$, for sufficiently small $\d>0$. Since 
$|\a(n)-n^\g L(n)|\lesssim n^\b$ by assumption, it follows that $\a(n)\gg n^\b$ and $n^\g L(n)/\a(n)\ra 1$,
as $n\ra\infty$. By the assumption we have $\a(nu)=(nu)^\g L(nu)+O(n^\b)
=(nu)^\g L(n)(1+o(1))+O(n^\b)$, for every $u$ as $n\ra\infty$, since $L$ is slowly varying,
and hence $\a(nu)/\a(n)=u^\g \bigl(n^\g L(n)/\a(n)\bigr)(1+o(1))+O(n^\b/\a(n))\ra u^\g$.
\end{proof}

\begin{lemma}
\label{LemmaMisc}
For every $s\ge 0$ and $\d\in (0,1]$:
\begin{itemize}
\item[(i)] $\sum_{m=1}^\infty s^mm^\d/m!\le s^\d e^s$, 
\item[(ii)] $\sum_{m=2}^\infty s^mm^\d/m!\le s^\d (e^s-1)$.
\end{itemize}
\end{lemma}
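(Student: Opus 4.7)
The plan for part (i) is to interpret the series probabilistically. With $N\sim\text{Poisson}(s)$, one has $\sum_{m=1}^\infty s^m m^\delta/m!=e^s\,\E[N^\delta]$, since the $m=0$ term contributes zero to the expectation. Because $x\mapsto x^\delta$ is concave for $\delta\in(0,1]$, Jensen's inequality gives $\E[N^\delta]\le(\E N)^\delta=s^\delta$, and multiplying by $e^s$ yields (i). (The boundary case $s=0$ is trivial since both sides vanish.)

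For part (ii) the same bound from (i) is not quite sharp enough on its own, so I would split into two cases meeting at $s=1$. If $s\ge 1$, subtract the $m=1$ term in (i) and use $s\ge s^\delta$ (valid for $s\ge 1$ and $\delta\le 1$) to get
\[
\sum_{m=2}^\infty s^m m^\delta/m!=\sum_{m=1}^\infty s^m m^\delta/m!-s\le s^\delta e^s-s\le s^\delta e^s-s^\delta=s^\delta(e^s-1).
\]
If $0\le s\le 1$, instead invoke the elementary bound $m^\delta\le m$ (valid for $m\ge 1$ and $\delta\le 1$) to obtain
\[
\sum_{m=2}^\infty s^m m^\delta/m!\le \sum_{m=2}^\infty s^m/(m-1)!=s(e^s-1)\le s^\delta(e^s-1),
\]
where the final step uses $s\le s^\delta$ for $s\le 1$.

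There is no real obstacle here; the only point to notice is that the Jensen-based bound is tight for large $s$ while the crude bound $m^\delta\le m$ is tight for small $s$, and the two happen to meet exactly at $s=1$, so pasting them together covers all $s\ge 0$.
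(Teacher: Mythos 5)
Your argument is correct, and both halves differ from the paper's proof, which handles (i) and (ii) by a single application of H\"older's inequality: write $s^m m^\delta/m!=(s^m m/m!)^\delta(s^m/m!)^{1-\delta}$ and apply H\"older with exponents $1/\delta$ and $1/(1-\delta)$ to get $\bigl(\sum_m s^m m/m!\bigr)^\delta\bigl(\sum_m s^m/m!\bigr)^{1-\delta}$; starting the sums at $m=1$ gives $(se^s)^\delta(e^s-1)^{1-\delta}\le s^\delta e^s$ for (i), and starting at $m=2$ gives $(s(e^s-1))^\delta(e^s-1-s)^{1-\delta}\le s^\delta(e^s-1)$ for (ii). Your proof of (i) via Poissonization and Jensen is essentially a probabilistic rephrasing of the same interpolation idea (Jensen for concave $x\mapsto x^\delta$ is the special case of H\"older that the paper is using), so that part is close in spirit even if stated differently. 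Your proof of (ii) is genuinely different: rather than reusing the interpolation with shifted index, you split at $s=1$, using $m^\delta\le m$ on $[0,1]$ and peeling off the $m=1$ term on $[1,\infty)$, with the monotonicity of $s\mapsto s^\delta/s$ gluing the two regimes. This is more elementary and self-contained, at the cost of being less uniform — the paper's one-line H\"older computation covers both parts with no case analysis, which is why it reads more cleanly, while yours makes the mechanism (crude bound good for small $s$, sharp bound good for large $s$, meeting exactly at $s=1$) more transparent.
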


\begin{proof}
Since $s^mm^\d/m!=  (s^mm/m!)^\d(s^m/m!)^{1-\d}$, H\"older's inequality with $p=1/\d$ and $q=1/(1-\d)$ gives that
the sums on the left are bounded above by $\bigl(\sum_m (s^mm/m!)\bigr)^\d\bigl(\sum_m (s^m/m!)\bigr)^{1-\d}$,
where the summation starts at $m=1$ for (i) and at $m=2$ for (ii).
In the case of (i) the first series is bounded above by $s e^s$ and the second by $e^s-1$,
while in the case of (ii) the bounds $s (e^s-1)$ and $e^s-1-s$ pertain.
\end{proof}

\begin{lemma}
\label{LemmaSum}
For $K\ra\infty$, we have $\sum_{i=1}^{K-1}\log (M+i\s)=K\log K+K\log (\s/e)+(M/\s-1/2)\log K+\log(\sqrt{2\pi}/\s)-\log\Gamma(1+M/\s)+O(1/K)$, 
where the remainder is bounded above by a universal multiple of $(M/\s+1)^2/K$, for all $M\ge 0,\s>0$.
\end{lemma}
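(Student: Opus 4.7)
The plan is to reduce the sum to a ratio of gamma functions and then apply Stirling's formula with an explicit uniform remainder. Setting $a=M/\s$, the factorization $\log(M+i\s)=\log \s+\log(a+i)$ together with the identity $\prod_{i=1}^{K-1}(a+i)=\Gamma(a+K)/\Gamma(a+1)$ yields
$$\sum_{i=1}^{K-1}\log(M+i\s)=(K-1)\log \s+\log\Gamma(a+K)-\log\Gamma(a+1).$$
The term $-\log\Gamma(a+1)=-\log\Gamma(1+M/\s)$ appears verbatim in the target formula, so no further treatment of it is needed.

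Next I would apply Stirling's formula in the explicit form $\log\Gamma(x)=(x-\tfrac12)\log x-x+\tfrac12\log(2\pi)+R(x)$, with $|R(x)|\le C/x$ for $x\ge 1$ and a universal constant $C$, evaluated at $x=a+K$. Splitting $\log(a+K)=\log K+\log(1+a/K)$ separates the large-$K$ part cleanly: the contribution $(a+K-\tfrac12)\log K=K\log K+(M/\s-\tfrac12)\log K$ already matches two of the target terms, and the Stirling remainder $R(a+K)=O(1/K)$ is trivially absorbed into the claimed $O((M/\s+1)^2/K)$.

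The genuine analytic work is then the uniform bound
$$\bigl|(a+K-\tfrac12)\log(1+a/K)-a\bigr|\le C(a+1)^2/K,\qquad a\ge 0,\; K\ge 1,$$
which is the main obstacle because the constant must be universal. I would decompose the expression as $(a+K-\tfrac12)\bigl[\log(1+a/K)-a/K\bigr]+\bigl[(a+K-\tfrac12)(a/K)-a\bigr]$. The second bracket equals $a(a-\tfrac12)/K$, of the stated order. For the first bracket I use the two-sided inequality $x-x^2/2\le\log(1+x)\le x$ valid for $x\ge 0$: when $a\le K$ this produces an error at most $(a+K)(a/K)^2/2=O(a^2/K)$; when $a>K$ the cruder estimate $0\le a/K-\log(1+a/K)\le a/K$ gives an error at most $(a+K)(a/K)=a+a^2/K$, and the extra term $a$ is itself absorbed since $a\le a^2/K$ whenever $a\ge K$. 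In either regime the bound $O((a+1)^2/K)$ holds with a universal constant. Combining with the Stirling expansion yields
$$\log\Gamma(a+K)=K\log K+(a-\tfrac12)\log K-K+\tfrac12\log(2\pi)+O\bigl((a+1)^2/K\bigr),$$
and substituting back into the gamma-function reduction and collecting the $\s$-dependent factors $(K-1)\log\s=K\log(\s/e)+K-\log\s$ produces exactly the stated identity.
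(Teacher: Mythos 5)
Your proof is correct and follows essentially the same route as the paper's: rewrite the sum as $(K-1)\log\s+\log\Gamma(K+M/\s)-\log\Gamma(1+M/\s)$, apply Stirling to the middle term, expand $\log(K+M/\s)$ around $\log K$, and collect terms. Your case split at $a=K$ when bounding $(a+K-\tfrac12)\log(1+a/K)-a$ is in fact a bit more careful about uniformity in $M/\s$ than the paper's one-line expansion $\log(K+M/\s)=\log K+M/(\s K)+O((M/\s)^2/K^2)$, which tacitly assumes $M/\s\lesssim K$.
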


\begin{proof}
The sum is equal to $(K-1)\log \s+\log \Gamma(K+M/\s)-\log \Gamma(1+M/\s)$. By the expansion for the log Gamma function,
the middle term can be expanded as
$$\log \Gamma\Bigl(K+\frac M\s\Bigr)=\Bigl(K+\frac M\s-\frac12\Bigr)\log \Bigl(K+\frac M\s\Bigr)-\Bigl(K+\frac M\s\Bigr)+\log\sqrt{2\pi}+O\Bigl(\frac1{K}\Bigr),$$
where the remainder term is uniform in $M$ and $\s$.
Next expand $\log (K+M/\s)$ as $\log K+M/(\s K)+O((M/\s)^2/ K^2)$. Finally we collect terms.
\end{proof}

\bibliographystyle{acm}
\bibliography{BvMPYbib}

\begin{thebibliography}{10}

\bibitem{Arbel2018}
{\sc Arbel, J., De~Blasi, P., and Pr{\"u}nster, I.}
\newblock Stochastic approximations to the {P}itman-{Y}or process.
\newblock {\em Bayesian Analysis\/} (2018).
\newblock Advance publication.

\bibitem{Binghametal}
{\sc Bingham, N.~H., Goldie, C.~M., and Teugels, J.~L.}
\newblock {\em Regular variation}, vol.~27 of {\em Encyclopedia of Mathematics
  and its Applications}.
\newblock Cambridge University Press, Cambridge, 1989.

\bibitem{Cereda2017}
{\sc Cereda, G.}
\newblock {\em Current challenges in statistical DNA evidence evaluation}.
\newblock PhD thesis, Leiden University, 2017.

\bibitem{Cereda2019}
{\sc Cereda, G., and Gill, R.~D.}
\newblock A nonparametric bayesian approach to the rare type match problem,
  2019.

\bibitem{DeBlasiLijoiandPrunster(2013)}
{\sc De~Blasi, P., Lijoi, A., and Pr{\"u}nster, I.}
\newblock An asymptotic analysis of a class of discrete nonparametric priors.
\newblock {\em Statistica Sinica 23}, 3 (2013), 1299--1321.

\bibitem{deBlasi2015}
{\sc de~Blasi~et al.}
\newblock Are {G}ibbs-type priors the most natural generalization of the
  {D}irichlet process?
\newblock {\em IEEE Transactions on Pattern Analysis and Machine Intelligence
  37}, 2 (2015), 212--229.

\bibitem{deHaan}
{\sc de~Haan, L., and Ferreira, A.}
\newblock {\em Extreme value theory}.
\newblock Springer Series in Operations Research and Financial Engineering.
  Springer, New York, 2006.
\newblock An introduction.

\bibitem{favaro2021nearoptimal}
{\sc Favaro, S., and Naulet, Z.}
\newblock Near-optimal estimation of the unseen under regularly varying tail
  populations, 2021.

\bibitem{Ferguson(1974)}
{\sc Ferguson, T.}
\newblock Prior distributions on spaces of probability measures.
\newblock {\em Ann. Statist. 2\/} (1974), 615--629.

\bibitem{Franssen2020}
{\sc Franssen, S. E. M.~P., and van~der Vaart, A.~W.}
\newblock The bernstein-von mises theorem for the pitman-yor process of
  nonnegative type, 2020.

\bibitem{FNBI}
{\sc Ghosal, S., and van~der Vaart, A.}
\newblock {\em Fundamentals of Nonparametric Bayesian Inference}.
\newblock Cambridge University Press, 2017.

\bibitem{Goldwater2005}
{\sc Goldwater, S., Griffiths, T.~L., and Johnson, M.}
\newblock Interpolating between types and tokens by estimating power-law
  generators.
\newblock In {\em Advances in neural information processing systems\/} (2005).

\bibitem{IshwaranJames}
{\sc Ishwaran, H., and James, L.~F.}
\newblock Gibbs sampling methods for stick-breaking priors.
\newblock {\em Journal of the American Statistical Association 96}, 453 (2001),
  161--173.

\bibitem{James2008}
{\sc James, L.}
\newblock Large sample asymptotics for the two-parameter {P}oisson-{D}irichlet
  process.
\newblock {\em Pushing the limits of contemporary Statistics: Contributions in
  Honor of Jayanta k. Ghosh 3\/} (2008).

\bibitem{Karlin1967}
{\sc Karlin, S.}
\newblock Central limit theorems for certain infinite urn schemes.
\newblock {\em Journal of Mathematics and Mechanics 17}, 4 (1967).

\bibitem{KleijnvanderVaart}
{\sc Kleijn, B. J.~K., and van~der Vaart, A.~W.}
\newblock The {B}ernstein-{V}on-{M}ises theorem under misspecification.
\newblock {\em Electron. J. Stat. 6\/} (2012), 354--381.

\bibitem{PermanPitmanandYor(1992)}
{\sc Perman, M., Pitman, J., and Yor, M.}
\newblock Size-biased sampling of {P}oisson point processes and excursions.
\newblock {\em Probab. Theory Related Fields 92}, 1 (1992), 21--39.

\bibitem{Pitman(1995)}
{\sc Pitman, J.}
\newblock Exchangeable and partially exchangeable random partitions.
\newblock {\em Probab. Theory Related Fields 102}, 2 (1995), 145--158.

\bibitem{Pitman1996b}
{\sc Pitman, J.}
\newblock Some developments of the {B}lackwell-{M}ac{Q}ueen urn scheme.
\newblock {\em Institute of Mathematical Statistics Lecture Notes - Monograph
  Series 30\/} (1996), 245--267.

\bibitem{Pitman(2003)}
{\sc Pitman, J.}
\newblock Poisson-{K}ingman partitions.
\newblock In {\em Statistics and {S}cience: a {F}estschrift for {T}erry
  {S}peed}, vol.~40 of {\em IMS Lecture Notes Monogr. Ser.} Inst. Math.
  Statist., Beachwood, OH, 2003, pp.~1--34.

\bibitem{PitmanandYor(1997)}
{\sc Pitman, J., and Yor, M.}
\newblock The two-parameter {P}oisson-{D}irichlet distribution derived from a
  stable subordinator.
\newblock {\em Ann. Probab. 25}, 2 (1997), 855--900.

\bibitem{Teh2006}
{\sc Teh, Y.~W.}
\newblock A hierarchical {B}ayesian language model based on {P}itman-{Y}or
  processes.
\newblock In {\em ACL-44 Proceedings of the 21st International Conference on
  Computational Linguistics and the 44th annual meeting of the Association for
  Computational Linguistics\/} (2006), pp.~985--992.

\bibitem{Wood2009}
{\sc Wood, F., Archambeau, C., Gasthaus, J., James, L., and Teh, Y.~W.}
\newblock A stochastic memoizer for sequence data.
\newblock In {\em Proceedings of the 26th Annual International Conference on
  Machine Learning\/} (New York, NY, USA, 2009), ICML '09, Association for
  Computing Machinery, p.~1129–1136.

\end{thebibliography}

\end{document}